\theoremstyle{plain}
\newtheorem{thm}{Theorem}[section]
\newtheorem{thm*}{Theorem}[section]
\newtheorem{cor}[thm]{Corollary}
\newtheorem{prop}[thm]{Proposition}
\newtheorem{lemma}[thm]{Lemma}
\newtheorem{lemma*}{Lemma}
\newtheorem{construct}[thm]{Construction}
\theoremstyle{definition}
\newtheorem{defn}[thm]{Definition}
\newtheorem{remark}[thm]{Remark}
\newtheorem*{remark*}{Remark}
\newtheorem{ex}[thm]{Example}
\newtheorem{notation}[thm]{Notation}
\newtheorem{question*}{Question}
\numberwithin{equation}{thm}
\newcommand{\bR}{\mathbb R}
\newcommand{\bT}{\mathbb T}
\newcommand{\bC}{\mathbb C}
\newcommand{\cB}{\mathcal B}
\newcommand{\cN}{\mathcal N}
\def\Spec{\operatorname{Spec}\nolimits}
\def\Proj{\operatorname{Proj}\nolimits}
\newcommand{\bG}{\mathbb G}
\newcommand{\cO}{\mathcal O}
\newcommand{\bA}{\mathbb A}
\newcommand{\cF}{\mathcal F}
\newcommand{\bP}{\mathbb P}
\newcommand{\bZ}{\mathbb Z}
\newcommand{\bF}{\mathbb F}
\newcommand{\cC}{\mathcal C}
\newcommand{\cE}{\mathcal E}
\newcommand{\cJ}{\mathcal J}
\newcommand{\ul}{\underline}
\def\Spec{\operatorname{Spec}\nolimits}
\def\sl2{\operatorname{SL_{2(2)}}\nolimits}
\def\Ga2{\operatorname{\mathbb G_{\rm a(2)}}\nolimits}
\newcommand{\bN}{\mathbb N}
\newcommand{\bQ}{\mathbb Q}
\newcommand{\bU}{\mathbb U}
\newcommand{\bu}{\bullet}
\date\today
\begin{document}

 \title[Reformulation of the Stable Adams Conjecture]{Reformulation of the Stable Adams Conjecture}
 
 \author[ Eric M. Friedlander]
{Eric M. Friedlander$^{*}$} 

\address {Department of Mathematics, University of Southern California,
Los Angeles, CA 90089}
%%% Email address is optional.
\email{ericmf@usc.edu}
%\email{eric@math.northwestern.edu}

\thanks{$^{*}$ partially supported by the Simons Foundation }

\subjclass[2020]{55N20, 55R15}

\keywords{stable Adams conjecture, spectra, simplicial schemes}

\begin{abstract}
We revisit methods of proof of the Adams Conjecture in order to correct and 
supplement earlier efforts to prove  analogous conjectures in the stable 
homotopy category.   We utilize simplicial
schemes over an algebraically closed field of positive characteristic and
a rigid version of Artin-Mazur \'etale homotopy theory.   Consideration
of special $\cF$-spaces together with Bousfield-Kan $\bZ/\ell$-completion
 enables us to employ an
``\'etale functor" which commutes up to homotopy with products of simplicial 
schemes.   In order to prove the Stable Adams Conjecture, we construct the universal
$\bZ/\ell$-completed $X$-fibrations for various pointed
simplicial sets $X$.  Thus, two maps from a given $\cF$-space $\ul\cB$ 
to the base $\cF$-space of the universal $\bZ/\ell$-completed $X$-fibration
$\pi_{X,\ell}: \ul\cB (G_\ell(X),X_\ell) \to \ul\cB G_\ell(X)$ determine homotopy equivalent 
maps of spectra if and only they correspond via pull-back of $\pi_{X,\ell}$ to
fiber homotopy equivalent $\bZ/\ell$-completed $X$-fibrations over $\ul\cB$.  
For the proof of the Stable Adams Conjecture, we consider maps of $\cF$-spaces
$\ul\cB \to \ul\cB G_\ell(S^2)$ where $\ul\cB$ is an $\cF$-space model
of connective $\ell$-completed connective $K$-theory.
\end{abstract}

\maketitle

\tableofcontents 

\section{Introduction}

The original Adams conjecture \cite{Adams} and its refinement, the Stable Adams Conjecture, involve the
$J$-homomorphism from (real or complex) $K$-theory to the stable homotopy groups of 
spheres and the effect of composing the $J$-homomorphism with the Adams operation
$\psi^p$ on $K$-theory for some prime $p$.   The conclusion of the original Adams
Conjecture and the more refined Stable Adams Conjecture (proved below)
requires (Bousfield-Kan) $\bZ/\ell$-completion or $Z_{(\ell)}$-localization
at a prime $\ell \not= p$.

As pointed out in \cite{B-K}, the author's verification of the Stable Adams Conjecture
given in \cite{F80} is contradicted by a counter-example.  The basic error of our
earlier work is the failure to take into account that  
the Frobenius map on ``algebraic spheres" is not orientation preserving.   Dropping
the condition of orientation preserving leads to the somewhat modified statement
of Theorem \ref{thm:below} and a proof that works with 
$\bZ/\ell$-completed $S^2$-fibrations (already considered in \cite{F80}) rather than
the $\bZ/\ell$-completion of oriented $S^2$-fibrations. 

\vskip .1in

Our main theorem is the following formulation of the Stable Adams Conjecture,
occurring as Theorem \ref{thm:stable-adams} in Section \ref{sec:stable}.   The
homotopy groups of $\pi_{i+1}({\bf BS^2_\ell})$ for $i > 0$ are naturally identified
with the $\bZ/\ell$-completions of stable homotopy groups of spheres, 
$\pi_i^{stable}(S^0) \otimes \bZ_\ell$, as made explicit in Proposition \ref{prop:stable-homotopy}.

\begin{thm}
\label{thm:below}
Let ${\bf kU}$ denote the 0-connected spectrum of (topological) complex K-theory
and let ${\bf BS^{2}_\ell}$ denote a 0-connected spectrum naturally constructed using
self-equivalences of $\bZ/\ell$-completions of even spheres.

If $p$ and  $\ell$ are distinct primes, then the maps of spectra
$${\bf J}_\ell, \ {\bf J}_\ell \circ (\psi^p)_\ell:  ({\bf kU})_\ell \times_{{\bf K(\bZ_\ell,0)}} {\bf K(\bZ,0)} 
\quad \to \quad {\bf BS^2_\ell} $$
are homotopy equivalent.
\end{thm} 

\vskip .1in

The above theorem has the following oriented version, given below as 
Corollary \ref{cor:oriented-stable-adams}.
We denote by $(\bf{kU})^o$ the 1-connected cover (which is 2-connected) of $\bf{kU}$.
We denote by  ${\bf bS^2_\ell}$  the 2-connected cover of ${\bf BS^2_\ell}$
constructed using oriented self-equivalences of $\bZ/\ell$-completions of even spheres.

\vskip .1in

\begin{cor}
Let $(\bf J_\ell)^o:  (({\bf kU})^o)_\ell \ \to \ \bf{bS^2_\ell}$ denote the map induced by ${\bf J_\ell}$ 
 on 2-connnected covers.  The following maps 
  $$(\bf J_\ell)^o, \ (\bf J_\ell)^o \circ (\psi^p)_\ell:  (({\bf kU})^o)_\ell 
\quad \to \quad {\bf bS^2_\ell}  $$
are homotopy equivalent as maps of spectra whenever $p$ and $\ell$ are distinct primes.
 \end{cor}
 
 \vskip .1in
 
 Theorem \ref{thm:below} has its own history.  Inspired by the work of D. Quillen \cite{Quillen}
 and D. Sullivan \cite{Sul} and encouraged by J. F. Adams, the author published an announcement
 in 1977 \cite{F-Sey} followed by his article \cite{F80}.  As pointed out in \cite{B-K}, that article
 failed to address the fact that the Frobenius map on ``algebraic spheres" is not orientation 
 preserving.  This present work addresses the ``tension" between the need for $\bZ/\ell$-completion
 (necessary when passing to algebraic geometry in characteristic $p \not= \ell$) and the awkward
 behavior of $\bZ/\ell$-completion when dealing with spaces which are not simply connected.
 Although this paper uses concepts and techniques of the 1970's rather than more sophisticated
 concepts of stable homotopy theory (as in \cite{Ando1}, \cite{Ando2}), the concrete models
 of $\cF$-spaces (also known as Segal $\Gamma$-spaces, as introduced in  \cite{Segal} )
 can serve as a stepping stone to infinity categories and the much more sophisticated homotopy 
 theory of J. Lurie \cite{Lurie} than the homotopy theory introduced by A.K. Bousfield and the author in \cite{Bo-F}. 
 
 Our model for the homotopy category of spectra is the homotopy category of $\cF$-spaces.
 %by G. Segal in \cite{Segal} and explored by A.K. Bousfield and the author in \cite{Bo-F}.  
 The category $\cF$ is the opposite category of Segal's category $\Gamma$; an 
 $\cF$-space is a functor ${\ul \cB}: \cF \ \to \ (s.sets_*)$ from 
the category $\cF$ of finite pointed sets to the category $(s.sets_*)$ of pointed simplicial sets. 
 As we discuss
 in this text, the $\cF$-space $ {\ul \cB}GL(\bC)$ is a model for the 0-connected spectrum of complex K-theory,
 the $\cF$-space $\ul\cB G(S^2)$ is a model for ${\bf BS^2}$, the $\cF$-space $\ul\cB G_\ell(S^2)$ is 
 a model for ${\bf BS^2_\ell}$, and $\cJ:  {\ul \cB}GL(\bC) \to {\ul \cB} G(S^2)$ \
 is a model for the $J$-homomorphism.  
 
\vskip .05in

A fundamental ingredient in the proof of the Theorem \ref{thm:below} is the following representability 
statement occurring as Theorem \ref{thm:X-ell-universal} in Section \ref{sec:ell-complete}.
In particular, this enables us to conclude when two different maps from an $\cF$-space $\ul \cB$
to the $\cF$-space $\ul\cB G_\ell(S^2)$ determine homotopy equivalent maps of spectra.
Our definition of a $\bZ/\ell$-complete $X$-fibration of $\cF$-spaces over $\ul\cN$ is a necessary elaboration
of our original definition in \cite{F80}, a modification due of P. Bhattacharya and Kitchloo
\cite{B-K}. 

\begin{thm}
\label{thm:univ}
Let $X$ be a pointed, connected simplicial set which is smash $\ell$-good.
Denote by 
$G_\ell(X) \hookrightarrow {\ul Hom}(X,X_\ell)$ the simplicial submonoid of mod-$\ell$ equivalences.
The $\bZ/\ell$-completed $X$-fibration of $\cF$-spaces over $\ul\cN$ given
in (\ref{eqn:pi-G(X)-ell}),
\ $\pi_{X,\ell}: \ul \cB (G_\ell(X),X_\ell) \ \to \ \ul \cB G_\ell(X),$ \
satisfies the following universal property:
\vskip .05in
For any special $\cF$-space ${\ul \cB}$ over $\ul\cN$ and any $\ell$-completed $X$-fibration 
$f: {\ul \cE} \ \to {\ul\cB}$ over ${\ul\cB}$,
there is a unique homotopy class of maps of $\cF$-spaces $\phi: {\ul \cB} \ \to \ {\ul\cB}G_\ell(X)$
such that $\phi^*(\pi_{X,\ell})$ is fiber homotopy equivalent to $f$ as $\bZ/\ell$-completed 
$X$-fibrations over ${\ul\cB}$.
\end{thm}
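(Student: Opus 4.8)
The plan is to establish the universal property by the standard two-step strategy for classifying-space arguments: (i) show that pull-back along $\pi_{X,\ell}$ induces a map from homotopy classes of $\cF$-space maps $\phi: \ul\cB \to \ul\cB G_\ell(X)$ to fiber-homotopy-equivalence classes of $\bZ/\ell$-completed $X$-fibrations over $\ul\cB$, and (ii) show this map is a bijection. For step (i), I would first recall the construction of $\pi_{X,\ell}$ in (\ref{eqn:pi-G(X)-ell}) as the bar-construction fibration $\ul\cB(G_\ell(X), X_\ell) \to \ul\cB G_\ell(X)$, with fiber $X_\ell$ and structure monoid $G_\ell(X)$ acting on $X_\ell$ through mod-$\ell$ equivalences; naturality of the bar construction gives that $\phi^*(\pi_{X,\ell})$ depends, up to fiber homotopy equivalence, only on the homotopy class of $\phi$, level-wise in the $\cF$-space structure and then on geometric realizations.

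The heart of the matter is step (ii), which itself splits into surjectivity (every $\bZ/\ell$-completed $X$-fibration $f: \ul\cE \to \ul\cB$ is pulled back from $\pi_{X,\ell}$) and injectivity (the classifying map is unique up to homotopy). For surjectivity, I would argue level-wise: over each object $n_+$ of $\cF$ (equivalently, over each simplicial degree after applying $Sin$ where needed), the fibration $f_n: \cE_n \to \cB_n$ is a fibration with fiber $\simeq X_\ell$ and structure monoid reducing to $G_\ell(X)$ — this reduction is precisely what ``$\bZ/\ell$-completed $X$-fibration'' is set up to guarantee — and then the classical Dold–Stasheff–type theorem for fibrations with structure monoid $\cG$ (here $\cG = G_\ell(X)$) produces a classifying map $\phi_n: \cB_n \to \cB G_\ell(X)$ with $\phi_n^*(\pi_{X,\ell}) \simeq f_n$. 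The compatibility of these $\phi_n$ with the $\cF$-structure maps, up to coherent homotopy, is where special ($\cF$-)spaces and the Bousfield–Kan $\bZ/\ell$-completion do real work: one uses that $\ul\cB G_\ell(X)$ is a special $\cF$-space and that the ``\'etale functor''/completion commutes up to homotopy with products (as emphasized in the abstract and the earlier sections) to rectify the level-wise classifying maps into an honest map of $\cF$-spaces. For injectivity, given $\phi, \phi'$ with $\phi^*(\pi_{X,\ell}) \simeq \phi'^*(\pi_{X,\ell})$, a fiber homotopy equivalence between the two pull-backs furnishes, again level-wise, a homotopy $\phi_n \simeq \phi'_n$ by the uniqueness half of the classification of $\cG$-fibrations; coherence of these homotopies is assembled using the same specialness/product-commutation input.

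I expect the main obstacle to be exactly this rectification: passing from a degreewise (or $\cF$-objectwise) classification — where everything is classical — to a statement about maps of $\cF$-spaces and their geometric realizations as maps of spectra, without losing control of the $\bZ/\ell$-completion. Concretely, the subtlety is that pull-back and $\bZ/\ell$-completion commute only up to homotopy, so one must be careful that the level-wise classifying maps $\phi_n$ can be chosen to strictly (or at least coherently) respect the simplicial identities in the $\cF$-space, and that ``smash $\ell$-good'' for $X$ is precisely the hypothesis making $\ul Hom(X, X_\ell)$ — and hence $G_\ell(X)$ and the bar construction — well-behaved under completion. I would handle this by working throughout with the rigid Artin–Mazur model and the special $\cF$-space formalism already set up, invoking the product-commutation of the \'etale/completion functor to guarantee that $\ul\cB(G_\ell(X), X_\ell)$ and $\ul\cB G_\ell(X)$ have the correct homotopy type in each $\cF$-degree, so that the classical classification applies degreewise and the coherence is automatic from specialness. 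The remaining verifications — that $\phi^*(\pi_{X,\ell})$ is again a $\bZ/\ell$-completed $X$-fibration, and that the bar construction is functorial enough — are routine given the definitions in the earlier sections.
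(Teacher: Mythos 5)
There is a genuine gap, and it sits exactly where you yourself locate the ``main obstacle.'' Your strategy is to classify level-wise (over each object of $\cF$, via a Dold--Stasheff-type theorem for fibrations with structure monoid $G_\ell(X)$) and then to ``rectify'' the resulting family of classifying maps, each defined only up to homotopy, into a single map of $\cF$-spaces, appealing to specialness of $\ul\cB G_\ell(X)$ and to the product-commutation of the \'etale/completion functor. But specialness does not rectify an incoherent family of maps into a natural transformation of functors on $\cF$; that is a nontrivial coherence problem requiring either an obstruction-theoretic argument or an explicit rectification device, and you supply neither. (The \'etale functor is also a red herring here: Theorem \ref{thm:X-ell-universal} is a statement in the homotopy theory of $\cF$-spaces and its proof makes no use of \'etale homotopy theory.)

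The paper avoids the rectification problem entirely by constructing the classifying map functorially from the outset, following May's approach via principal fibrations. For each $I$ one forms the subcomplex $P((\bZ/\ell)_\infty^\bu(f_I))\subset \ul{Hom}_*(X^I,(\bZ/\ell)_\infty^\bu(\ul\cE_I))$ of principal simplices (Definition \ref{defn:principal-ell-simplex}, Construction \ref{construct:principal-ell}); this is a principal $G_\ell(X^I)$-fibration over $\ul\cB_I$, where the right $G_\ell(X^I)$-action is defined using the triple structure $\eta:(\bZ/\ell)_\infty\circ(\bZ/\ell)_\infty\to(\bZ/\ell)_\infty$ --- a point your proposal does not engage with, since $G_\ell(X)$ consists of maps $X\to X_\ell$ rather than self-maps of a Kan complex. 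The two-sided bar construction then yields a span of $\cF$-spaces $\ul\cB \leftarrow \ul\cB(P(\bZ/\ell)_\infty^\bu(f),G_\ell(X)) \to \ul\cB G_\ell(X)$ in which the left arrow is a homotopy equivalence (Proposition \ref{prop:func-P(f)-ell}), and the classifying map is $q_{f,\ell}\circ p_{f,\ell}^{-1}$; uniqueness follows by showing this assignment is well defined on fiber homotopy equivalence classes and inverse to pull-back. To repair your proof you would need to replace the level-wise-plus-rectification step with such a globally functorial construction (or else carry out the coherence argument in detail); as written, the existence of the map $\phi$ of $\cF$-spaces is asserted rather than proved.
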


An analogous theorem, Theorem \ref{thm:X-ell-universal-o}, is proved relating oriented homotopy
classes of oriented $\bZ/\ell$-completed $X$-fibrations to homotopy classes of maps to 
${\ul\cB}G^o_\ell(X)$.  If $X$ is a pointed, connected, finite simplicial set such that $|X|$ is a
nilpotent space, then Proposition \ref{prop:pi-o} verifies that
 ${\ul\cB}G^o_\ell(X)$ is homotopy equivalent
to the $\bZ/\ell$-completion of $\ul\cB G^o(Y)$.  Our example of interest the case that $|X| = S^2$.

Formulating  the Adams operation $\psi^p$ in the context of $\cF$-spaces is not
straight-forward.  D. Quillen's approach was to represent  $\psi^p$ ``at the unstable level"
by the Frobenius map on Grassmannianians over a field of characteristic 
$p > 0$.  Although this interpretation of $\psi^p$ informs our intuition and is central
to our proof, our $\cF$-space representation also employs
D. Sullivan's ``complex Frobenius map" $\sigma^{1/p}$ which induces a discontinuous Galois
action on complex Grassmannians (see \cite{Sul}).

 Our strategy of proof is to use commutativity of diagrams of simplicial schemes
 to produce $\cF$-spaces.  To do this, we utilize the functor $(-)^\wedge$ 
which sends a simplicial scheme to a simplicial set.
Key attributes of $X \mapsto X^\wedge \equiv (X)^\wedge$ are its functoriality, its 
dependence upon the \'etale
site of $X$, its use of $\bZ/\ell$-completion for some prime $\ell$ invertible in the structure
sheaf of $X$, and the fact that the functorial map of simplicial sets $(X\times Y)^\wedge \to 
X^\wedge \times Y^\wedge$ is a homotopy equivalence for particularly nice simplicial 
schemes $X$ and $Y$.   For all but
the most intrepid reader, $(-)^\wedge$ should be viewed as a black box explained in detail in
the references.  Granted that the functorial maps 
$(X \times Y)^\wedge \ \to \ X^\wedge \times Y^\wedge$ are homotopy equivalences 
for ``relevant" simplicial $k$-varieties $X$ and $Y$, the $\cF$-spaces obtained from
``special $\cF$-objects" of simplicial schemes are  special $\cF$-spaces. 

One objective of this current work is to enable others
to use similar techniques to explore the interface of algebraic geometry and
algebraic topology.  With this in mind, we provide references for
various results in \'etale homotopy theory and for $\bZ/\ell$-completions
of ``spaces" that we use.  In particular, we rely heavily
on the fundamental text \cite{Bo-Kan} by A.K. Bousfield and D. Kan 
for important properties of the $\bZ/\ell$-completion functor
$(\bZ/\ell)_\infty(-): (s.sets_*) \to (Kan \ cxes_*)$ as well as the homotopy limit functor
$\underset{\longleftarrow}{holim}(-)$.  

The interested reader will find that we dedicate much of this text to giving explicit 
constructions of objects and maps in the category of $\cF$-spaces in order to ensure 
that our conclusions are valid in stable homotopy theory.     For the classification 
theorems, we confront the issues that smash products of Kan complexes are not 
Kan complexes and that smash product of $\bZ/\ell$-completions are not $\bZ/\ell$-complete.
This entails some care in defining function complexes and restrictions on homotopy types
whose completions we consider.  We introduce a formulation of 
 smash products of algebraic spheres which behaves well with respect to actions
 we consider.

Throughout this paper, $k$ will denote an algebraically closed field of characteristic $p > 0$ and
$\ell$ will denote a prime different from $p$.  We caution the reader that we use 
$\bZ_\ell$ to denote the $\ell$-adic integers (equal to \ $\varprojlim_n \bZ/\ell^n$).
We fix some $\ell$-th primitive root of unity in $k$, 
thereby identifying the \'etale sheaf $\mu_\ell$ on a $k$-variety with the constant sheaf $\bZ/\ell$. 
We frequently utilize \ $(X)_\ell$ or $X_\ell$ to denote $(\bZ/\ell)_\infty(X)$, 
which we refer to as the $\bZ/\ell$-completion of $X$.   On the other hand, 
we refer to the ``$\ell$-completion" of an abelian group.  
When working with schemes over $\Spec R$ for $R$ equal to
$k$, \ $\bC$, \ or the Witt vectors $W(k)$ of $k$, we designate the fiber product over $\Spec R$ 
simply by $(-) \times (-)$.

We respectfully acknowledge the influence upon this work of the innovative ideas introduced
by Daniel Quillen in \cite{Quillen} and Dennis Sullivan in \cite{Sul} to prove the original 
Adams Conjecture.  With much gratitude, we thank J. Peter May for his patient guidance 
over many years.  We also thank an anonymous mathematician who discovered 
that the ``proof" given in our paper \cite{F80} is flawed.  Finally, we thank Nitya Kitchloo who 
revived our interest in the Stable Adams Conjecture and whose modified formulation 
of the Stable Adams Conjecture influenced this revision.

\vskip .2in

%%%%%%%%%%%%%%%%%%%%%%%%%%
%%%%%%%%%%%%%%%%%%%%%%%%%%

\section{Frobenius maps and orientations}

We begin by recalling the (geometric) Frobenius endomorphism,
stated for simplicity for affine $k$-varieties over $k$ defined over $\bF_q$
but easily extended to simplicial varieties over $k$ defined over $\bF_q$.

\begin{prop}
\label{prop:Frob}
Let $V$ be an affine algebraic variety over $k$, the (algebraic) spectrum of
a finitely generated commutative $k$-algebra, $\cO(V)$.   Assume that $V$ is defined 
over some finite subfield $\bF_q \subset k$ with $q = p^d$, so that $\cO(V) = \cO(V_{\bF_q}) \otimes_ k$
for some finitely generated commutative $\bF_q$-algebra $\cO(V_{\bF_q})$.    
\begin{enumerate}
\item
The $d$-th {\bf (geometric) Frobenius} $F^d: V \to V$ is map of $k$-varieties defined as
the base change to $k$ of the $q$-th power map
$V_{\bF_q} \to V_{\bF_q}$ given by sending $f \in \cO(V_{\bF_q})$ to $f^q$.  
\item 
The  $d$-th {\bf (arithmetic) Frobenius} $\sigma^d: V \to V$ is the map of schemes (but
not of $k$-varieties) given
by $1 \otimes (-)^q: \cO(V_{\bF_q} ) \otimes k \ \to \ \cO(V_{\bF_q} ) \otimes k$.
\item
The {\bf total $q$-th power map} \ $(-)^q = F^d \circ \sigma^d = \sigma^d \circ F^d: V \ \to \ V$ 
is the map of schemes sending each  $f \in \cO(V)$ to $f^q$.  This map  induces the identity map 
on the (rigid) \'etale homotopy type and \`etale cohomology of $V$.
 \end{enumerate}
 \end{prop}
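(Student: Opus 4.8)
The plan is to prove Proposition~\ref{prop:Frob} by unwinding the definitions and then reducing the one nontrivial assertion---that the total $q$-th power map induces the identity on (rigid) \'etale homotopy types and \'etale cohomology---to a standard fact about \'etale morphisms. First I would verify the factorization $(-)^q = F^d \circ \sigma^d = \sigma^d \circ F^d$ directly at the level of $k$-algebras: on $\cO(V) = \cO(V_{\bF_q}) \otimes_{\bF_q} k$, the geometric Frobenius $F^d$ acts by $f \otimes c \mapsto f^q \otimes c$ (base change of the $q$-power map on $\cO(V_{\bF_q})$, which is $\bF_q$-linear since $a^q = a$ for $a \in \bF_q$), while the arithmetic Frobenius $\sigma^d$ acts by $f \otimes c \mapsto f \otimes c^q$. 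Since these two operations commute and their composite sends $f \otimes c \mapsto f^q \otimes c^q = (f \otimes c)^q$ (using that $(-)^q$ is additive and multiplicative in characteristic $p$), we obtain the claimed identity $(-)^q = F^d \circ \sigma^d = \sigma^d \circ F^d$, and this composite is manifestly the absolute $q$-power endomorphism of $\cO(V)$, independent of the chosen $\bF_q$-structure.

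Next I would address the homotopy-theoretic claim. The key point is that the absolute Frobenius $(-)^q: V \to V$ is a \emph{universal homeomorphism}: it is a finite, radicial, surjective morphism of schemes, and in particular it is purely inseparable, hence induces an equivalence on the small \'etale site $V_{\text{\'et}} \xrightarrow{\sim} V_{\text{\'et}}$ (every \'etale morphism is invariant under base change along a universal homeomorphism, and the functor $W \mapsto W \times_{V,(-)^q} V$ is an equivalence on \'etale $V$-schemes since it is pulled back along a nilpotent thickening after passing to the perfection / using the topological invariance of the \'etale site, SGA~1 or SGA~4). From this it follows formally that $(-)^q$ induces the identity functor up to natural isomorphism on \'etale sheaves, hence the identity on \'etale cohomology $H^*_{\text{\'et}}(V, \bZ/\ell)$, and---because the (rigid) Artin--Mazur \'etale homotopy type is built functorially from the \'etale site---the identity (up to the canonical natural transformation, i.e.\ homotopic to the identity) on the rigid \'etale homotopy type. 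I would cite the relevant statements in the \'etale homotopy references the paper relies on rather than reproving topological invariance of the \'etale site.

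The extension to simplicial $k$-varieties defined over $\bF_q$ that is mentioned in the statement is then immediate: one applies the affine (or general scheme-theoretic) construction degreewise, and since $F^d$, $\sigma^d$, and $(-)^q$ are defined by functorial algebra operations they are compatible with the simplicial face and degeneracy maps, so $(-)^q$ acts as the absolute Frobenius in each simplicial degree and therefore induces the identity on the rigid \'etale homotopy type of the simplicial scheme and on its \'etale cohomology.

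I expect the main obstacle to be purely expository: being careful about what ``induces the identity on rigid \'etale homotopy types'' means precisely---i.e.\ that $(-)^q$ induces a map which is \emph{equal to} (not merely homotopic to) the identity on the rigid model, or naturally isomorphic to it, so that later arguments can substitute $(-)^q$ for $\id$ on the nose where needed. The actual mathematical content (topological invariance of the \'etale site under universal homeomorphisms, and functoriality of the rigid \'etale homotopy type) is standard; the care lies in matching the level of strictness to what the rigid formalism provides and to what the applications in Section~\ref{sec:stable} will require.
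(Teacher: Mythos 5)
Your argument is correct and is exactly the standard one the paper implicitly relies on (the paper states Proposition \ref{prop:Frob} without proof, treating it as a recollection): the algebra-level factorization $f\otimes c\mapsto f^q\otimes c^q$ plus the fact that the absolute $q$-power map, being a universal homeomorphism, induces an equivalence of \'etale sites and hence the identity on \'etale cohomology and on the (rigid) \'etale homotopy type. Your closing caveat about strictness versus homotopy-level identity on the rigid model is well taken but does not affect the correctness of the proof.
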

 
 \vskip .1in

The proof of the following proposition is a straight-forward application of the Kummer sequence 
$0 \to \cO^* \to \cO^* \to \mu_\ell \to 0$ and Hilbert's Theorem 90.  (Recall that we have fixed
an isomorphism of $\mu_\ell$ with $\bZ/\ell$.)

\begin{prop}
\label{prop:Pn}
The map on \`etale cohomology
$$F^*: H_{et}^{2n}(\bP^n_k,\mu_\ell)  \ \simeq \bZ/\ell \quad \to \quad \bZ/\ell \ \simeq \ H_{et}^{2n}(\bP^n_k,\mu/\ell)$$
induced by the Frobenius map equals multiplication by $p^n$ on $\bZ/\ell$.  Here, $\bP^n_k$
is the projective algebraic variety given by $\Proj(k[x_0,\ldots,x_n])$.
 \end{prop}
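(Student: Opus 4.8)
The plan is to reduce the computation of $F^*$ on $H^{2n}_{et}(\bP^n_k, \bZ/\ell)$ to the case $n=1$ by multiplicativity of the cup product, and then to handle $n=1$ directly via the Kummer sequence as the text already suggests. First I would recall that for $\bP^n_k$ over an algebraically closed field $k$ of characteristic $p$, the $\ell$-adic (or mod-$\ell$) \'etale cohomology ring is $H^*_{et}(\bP^n_k, \bZ/\ell) \cong \bZ/\ell[h]/(h^{n+1})$ with $h \in H^2_{et}(\bP^n_k, \bZ/\ell)$ the class of a hyperplane; in particular $H^{2n}_{et}(\bP^n_k,\bZ/\ell)$ is free of rank one over $\bZ/\ell$ generated by $h^n$. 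Since the geometric Frobenius $F = F^d$ (with $q = p^d$; here the interesting case is $d=1$, $q=p$, and the general case is this raised to the $d$-th power, giving multiplication by $p^{dn}$ — but the statement as written takes $q = p$, so I will treat $F = F^1$) is a ring endomorphism of $H^*_{et}(\bP^n_k,\bZ/\ell)$, it suffices to determine $F^*(h) \in H^2_{et}(\bP^n_k,\bZ/\ell) \cong \bZ/\ell$: if $F^*(h) = c \cdot h$ then $F^*(h^n) = c^n h^n$, so the claim $F^* = p^n$ on $H^{2n}$ is equivalent to $c = p$ on $H^2$. By naturality under the closed immersion $\bP^1_k \hookrightarrow \bP^n_k$ (a linear subspace), which is Frobenius-equivariant and pulls $h$ back to the hyperplane class on $\bP^1_k$, it is enough to prove $F^* = p$ on $H^2_{et}(\bP^1_k, \bZ/\ell) \cong \bZ/\ell$.

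For the case of $\bP^1_k$, I would use the Kummer exact sequence of \'etale sheaves $0 \to \mu_\ell \to \cO^* \xrightarrow{(-)^\ell} \cO^* \to 0$ on $\bP^1_k$ (valid since $\ell$ is invertible in $k$), identifying $\mu_\ell$ with the constant sheaf $\bZ/\ell$ via the chosen primitive $\ell$-th root of unity. The associated long exact sequence, together with $H^0_{et}(\bP^1_k,\cO^*) = k^*$ being $\ell$-divisible (as $k$ is algebraically closed) and $H^1_{et}(\bP^1_k,\cO^*) = \mathrm{Pic}(\bP^1_k) \cong \bZ$ with $H^2_{et}(\bP^1_k,\cO^*) = 0$ (Hilbert's Theorem 90 and the vanishing of the Brauer group of $\bP^1$ over an algebraically closed field), yields a natural isomorphism $H^2_{et}(\bP^1_k, \bZ/\ell) \cong \mathrm{Pic}(\bP^1_k) \otimes_{\bZ} \bZ/\ell \cong \bZ/\ell$, under which the generator corresponds to the line bundle $\cO(1)$. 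The point is then that Frobenius acts naturally on this whole exact sequence, so $F^*$ on $H^2_{et}(\bP^1_k,\bZ/\ell)$ is induced by $F^*$ on $\mathrm{Pic}(\bP^1_k)$, i.e. by $\cL \mapsto F^*\cL$. Since $F$ is the $p$-th power map on functions, $F^*\cO(1) \cong \cO(p)$ (pulling back the degree-one line bundle along the degree-$p$ map raises degree to $p$), so $F^*$ acts on $\mathrm{Pic}(\bP^1_k) \cong \bZ$ as multiplication by $p$, hence as multiplication by $p$ on $\bZ/\ell$. Combined with the reduction above, this gives $F^* = p^n$ on $H^{2n}_{et}(\bP^n_k,\bZ/\ell)$.

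The main obstacle, and the step needing the most care, is pinning down the sign and the precise normalization: one must verify that the identification $H^2_{et}(\bP^1_k,\bZ/\ell) \cong \mathrm{Pic} \otimes \bZ/\ell$ is genuinely natural in $\bP^1_k$ with respect to finite flat maps like Frobenius (so that the boundary map in the Kummer sequence commutes with $F^*$ on the nose, not just up to a unit), and that $F^*\cO(1) = \cO(p)$ rather than $\cO(\pm p)$ — i.e. that Frobenius pull-back of line bundles multiplies degree by $+p$. This is where the orientation subtlety flagged in the introduction lives: the Frobenius is \emph{not} orientation-preserving precisely because $p \not\equiv 1 \pmod \ell$ in general, and the computation $F^* = p^n$ (not $F^* = 1$) on the top cohomology is exactly the quantitative form of that failure. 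An alternative, which I would mention as a cross-check, is to bypass $\mathrm{Pic}$ and argue purely on cohomology: the total $p$-th power map $(-)^p = F \circ \sigma$ induces the identity on $H^*_{et}$ by Proposition~\ref{prop:Frob}, while the arithmetic Frobenius $\sigma$ acts on $\mu_\ell \cong \bZ/\ell$, and hence on $H^*_{et}(-,\mu_\ell)$, through its action on $\ell$-th roots of unity, namely by the Galois element $\zeta \mapsto \zeta^p$; so on $H^2_{et}(\bP^1_k,\mu_\ell)$ — which is generated by a Chern-class-type element, a $\mu_\ell$-valued class — one gets $\sigma^* = p^{-1}$ and therefore $F^* = ((-)^p)^* \circ (\sigma^*)^{-1} = p$. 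Either route delivers the claimed multiplication by $p^n$ on $H^{2n}_{et}(\bP^n_k,\bZ/\ell)$.
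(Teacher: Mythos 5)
Your proof is correct and follows the same route the paper indicates (the Kummer sequence $0 \to \mu_\ell \to \cO^* \to \cO^* \to 0$ plus Hilbert's Theorem 90, identifying $H^2_{et}$ with $\mathrm{Pic}\otimes\bZ/\ell$ and computing $F^*\cO(1)=\cO(p)$), merely fleshed out with full details. The preliminary reduction to $\bP^1$ is harmless but unnecessary, since the identical Kummer/Pic argument applies directly to $\bP^n$ to give $F^*(h)=ph$, and cup-product multiplicativity then yields $p^n$ on $H^{2n}_{et}$.
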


The following corollary of Proposition \ref{prop:Pn} follows from standard properties of the \'etale 
cohomology of a simplical scheme.  We point out that the definition of $S^{2n,alg}_R$ differs from
that of \cite[Prop 8.2]{F80}, enabling a corrected formulation of the smash product of ``algebraic spheres".
\vskip .1in

\begin{cor}
\label{cor:Frob}
Let $R$ denote either $\bC$ (the complex numbers) or $k$ or the the Witt vectors $W(k)$ of $k$ 
(a complete discrete valuation ring with residue field $k$ and field of fractions of characteristic 0).
Consider the simplicial mapping cone of the open embedding 
$\bA_R^{n}{\text -}\{ 0 \} \hookrightarrow \bA^n_R$,
$$S^{2n,alg}_R \ \equiv \ (\Spec R) \cup_{(\bA_R^n{\text-}\{ 0 \} \times 0)}  
(\bA_R^n{\text-}\{ 0 \} \times \Delta[1])  \cup_{(\bA_R^n{\text-}\{ 0 \} \times 1)}   \bA_R^n,$$
a simplicial $R$-scheme.  Then 
$$H_{et}^i(S^{2n,alg}_R,\bZ/\ell) \simeq \bZ/\ell, \ i = 0, 2n, \quad H_{et}^i(S^{2n,alg}_R,\bZ/\ell) = 0, \ 
i \not=  0, 2n.$$  

The Frobenius map $F$ induces $F^*: H_{et}^{2n}(S^{2n,alg}_k,\bZ/\ell)  \ \to \ 
H_{et}^{2n}(S^{2n,alg}_k,\bZ/\ell)$ which corresponds to multiplication by $p^n$ on $\bZ/\ell$.
\end{cor}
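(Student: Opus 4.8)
The plan is to recognize $S^{2n,alg}_R$ as (a model for) the homotopy cofiber of the open immersion $j\colon U\hookrightarrow\bA^n_R$, where $U=\bA^n_R\setminus\{0\}$, and then to run the resulting long exact étale cohomology sequence after first computing the cohomology of $\bA^n_R$ and of $U$. Indeed, the given presentation glues the one-point scheme $\Spec R$ to one end of the cylinder $U\times\Delta[1]$ and $\bA^n_R$ to the other, which is exactly the mapping-cone construction; by the standard descent/Mayer--Vietoris properties of the étale cohomology of a simplicial scheme this yields the exact sequence relating $H^*_{et}(S^{2n,alg}_R,\bZ/\ell)$, $H^*_{et}(\bA^n_R,\bZ/\ell)$ and $H^*_{et}(U,\bZ/\ell)$.

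First I would record the cohomology of the two pieces. By $\bA^1$-homotopy invariance of étale cohomology with $\bZ/\ell$-coefficients (valid since $\ell$ is invertible in $R$), $H^i_{et}(\bA^n_R,\bZ/\ell)\cong H^i_{et}(\Spec R,\bZ/\ell)$, and the latter is $\bZ/\ell$ concentrated in degree $0$: for $R=\bC$ or $R=k$ because $R$ is separably closed, and for $R=W(k)$ because $W(k)$ is a strictly henselian local ring. For $U$ I would use the Gysin (localization) sequence of the closed immersion $\{0\}=\Spec R\hookrightarrow\bA^n_R$, a smooth pair of codimension $n$, together with the purity isomorphism $H^q_{\{0\}}(\bA^n_R,\bZ/\ell)\cong H^{q-2n}_{et}(\Spec R,\mu_\ell^{\otimes(-n)})$; combined with the computation just made, this gives $H^i_{et}(U,\bZ/\ell)\cong\bZ/\ell$ for $i=0$ and $i=2n-1$ (the latter canonically $\mu_\ell^{\otimes(-n)}$) and $0$ otherwise. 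Feeding this into the mapping-cone sequence, and using that $S^{2n,alg}_R$ is connected, yields $H^i_{et}(S^{2n,alg}_R,\bZ/\ell)\cong\bZ/\ell$ for $i=0,2n$ and $0$ otherwise, with a canonical identification $H^{2n}_{et}(S^{2n,alg}_k,\bZ/\ell)\cong\mu_\ell^{\otimes(-n)}$ over $k$ (this is the Tate twist in which Proposition \ref{prop:Pn} also expresses $H^{2n}_{et}(\bP^n_k,\bZ/\ell)$).

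For the Frobenius assertion, $F$ fixes the origin and preserves $U$, hence acts compatibly on the whole cofiber/localization diagram, so it is enough to identify $F^*$ on $H^{2n}_{\{0\}}(\bA^n_k,\bZ/\ell)\cong\mu_\ell^{\otimes(-n)}$. By Proposition \ref{prop:Frob} the total $p$-th power map $(-)^p=F\circ\sigma=\sigma\circ F$ induces the identity on étale cohomology, so $F^*=(\sigma^*)^{-1}$. The arithmetic Frobenius $\sigma$ is the identity on the coordinates $x_1,\dots,x_n$, hence fixes $\{0\}$ scheme-theoretically, so the purity isomorphism above is $\sigma$-equivariant; since $\sigma$ acts on $\mu_\ell\subset k^\times$ by $\zeta\mapsto\zeta^p$, it acts on $\mu_\ell^{\otimes(-n)}$ by multiplication by $p^{-n}$, and therefore $F^*=(\sigma^*)^{-1}$ is multiplication by $p^n$. (This is the Kummer-sequence computation of Proposition \ref{prop:Pn} transported along the isomorphisms above; alternatively one could construct an $F$-equivariant comparison map $S^{2n,alg}_k\to\bP^n_k/\bP^{n-1}_k$ and quote Proposition \ref{prop:Pn} directly.)

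The cohomology computation of the first two paragraphs is routine. The step I expect to be delicate is the $\sigma$- (equivalently $F$-) equivariance of the purity isomorphism together with the correct tracking of the Tate twist through our fixed identification $\mu_\ell\cong\bZ/\ell$: after that identification the group in top degree is simply $\bZ/\ell$, yet $F^*$ is multiplication by $p^n$ and not the identity, so $F$ is not orientation preserving on the algebraic $2n$-sphere --- precisely the point whose neglect invalidated the formulation in \cite{F80}.
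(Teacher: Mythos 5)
Your argument is correct, and it reaches both conclusions by a route genuinely different from the paper's. For the cohomology of $U=\bA^n_R{\text-}\{0\}$ the paper argues by induction on $n$ using the Leray spectral sequence of the $\bG_m$-bundle $\bA^n_R{\text-}\{0\}\to\bP^{n-1}_R$ (whose input is Proposition \ref{prop:Pn}), whereas you use the Gysin/localization sequence of $\{0\}\hookrightarrow\bA^n_R$ together with relative purity and $\bA^1$-invariance; both are standard, and yours has the advantage of handing you $H^{2n-1}_{et}(U,\bZ/\ell)$ \emph{canonically} as the Tate twist $\mu_\ell^{\otimes(-n)}$ rather than merely as an abstract $\bZ/\ell$. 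That canonical identification then powers your second step: you compute $F^*$ purely from Proposition \ref{prop:Frob} (total power map acts trivially, so $F^*=(\sigma^*)^{-1}$) and the Galois action on $\mu_\ell^{\otimes(-n)}$, while the paper instead transports the answer of Proposition \ref{prop:Pn} through the same bundle spectral sequence (Frobenius acting by $p^{n-1}$ on the base $\bP^{n-1}$ and by $p$ on the fiber $\bG_m$). The one point you rightly flag as delicate --- $\sigma$-equivariance of the purity isomorphism and the bookkeeping of the twist against the fixed identification $\mu_\ell\cong\bZ/\ell$ --- is exactly where the content lies, and is the standard Galois-equivariance of the cycle class of the origin; your alternative of an $F$-equivariant comparison with $\bP^n_k/\bP^{n-1}_k$ would land you essentially back on the paper's use of Proposition \ref{prop:Pn}. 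Either way the answer $p^n$, and in particular the failure of $F$ to preserve orientation, comes out the same.
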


\begin{proof}
A standard simplicial argument implies that it suffices to verify that \\
$H_{et}^i(\bA_R^n{\text-}\{ 0 \} ,\bZ/\ell) \simeq \bZ/\ell, \ i = 0, 2n-1$
and $H_{et}^i(\bA_R^n{\text-}\{ 0 \} ,\bZ/\ell) = 0$ otherwise.  This follows by
induction on $n$ using Proposition \ref{prop:Pn}: one starts with $H^*(\bG_{m,R},\bZ/\ell)$
and uses the spectral sequence 
for the bundle map $\bA_R^n{\text-}\{ 0 \}  \ \to \ \bP_{n,R}^{n-1}$ with fibers isomorphic to $\bG_{m,R}$.

The same spectral sequence, now with ground field $k$, enables the identification of $F^*$ on
$H_{et}^{2n-1}(\bA_k^n{\text-}\{ 0 \} ,\bZ/\ell)$ with $F^*$ on $H_{et}^{2n}(\bP^n_k,\bZ/\ell)$
which leads to the identification of $F^*$ on $H_{et}^{2n}(S^{2n,alg}_k,\bZ/\ell)$.
\end{proof}

\vskip .1in

Smash products play an important role in our arguments.  The following definition formalizes our
the smash products of ``algebraic spheres", correcting the definition in \cite[\S 8]{F80}.

\vskip .1in

\begin{defn}
\label{defn:smash-alg}
Let $R$ denote $\bC$ or $W(k)$ or $k$.  The smash product 
\begin{equation}
\label{eqn:define-wedge}
\wedge:  S^{2m,alg}_R \times S^{2n,alg}_R \quad \to \quad S^{2m+2n,alg}_R
\end{equation}
is the map of simplicial $R$-schemes induced by the natural map
from $S^{2m,alg}_R \times S^{2n,alg}_R$ to the identification simplicial scheme
$$(\Spec R) \cup_{\bA_R^{m+n}{\text-}\{ 0 \} \times \{ (0,0),(0,1),(1,0) \}  }
(\bA_R^{m+n}{\text-}\{ 0 \} \times \Delta[1]\times \Delta[1])  \cup_{\bA_R^{m+n}{\text-}\{ 0 \} \times\{(1,1) \}}   
\bA_R^{m+n}$$
composed with the map to $S^{2m+2n,alg}_R$ induced by the simplicial map $\Delta[1] \times \Delta[1] \to \Delta[1]$
sending the vertices $(0,0),(0,1),(1,0)$ of $\Delta[1] \times \Delta[1]$ to the vertex 0 of the target 
$\Delta[1]$ and sending the vertex $(1,1)$ to the vertex 1 of the target.
\end{defn}

\vskip .1in

\begin{prop}
\label{prop:alg-smash}
There is a natural $GL_{n,R}$ action on $S^{2n,alg}_R$ with the property that
$\wedge: S^{2m,alg}_R \times S^{2n,alg}_R \ \to \ S^{2m+2n,alg}_R$ is 
$GL_{m,R} \times GL_{n,R}$-equivariant and induces an isomorphism
\begin{equation}
\label{eqn:et-iso}
H^*_{et}(S^{2m+2n,alg}_R,\bZ/\ell) \ \stackrel{\sim}{\to} \ 
H^*_{et}(S^{2m,alg}_R \wedge S^{2n,alg}_R,\bZ/\ell).
\end{equation}
\end{prop}

\begin{proof}
The $GL$-equivariance follows from the fact that the $GL$-action does not affect the
simplicial coordinates.

The isomorphism (\ref{eqn:et-iso}) follows from the computation of 
$H^*_{et}(\bA^n{\text-}\{0\},\bZ/\ell)$ using the  proof of Corollary \ref{cor:Frob} 
and the spectral sequence for the cohomology of a simplical
scheme.
\end{proof}

\vskip .1in

The next proposition addresses the issue of the compatibility of the 
smash product on spheres $S^{2n}$ with the smash product of spaces 
$|S_\bC^{alg,2n}|$ closely  related to $S^{2n,alg}_\bC$.   The most natural way
to introduce such compatibility is to view $S^{2n}$ as the 1-point compactification 
$(\bC^n)^+$ of $\bC^n$ equipped with its analytic topology
and use the action of $GL_n(\bC)$ on $(\bC^n)^+$; in this case, we view 
the smash product as the map on one point compactifications, 
$(\bC^m)^+ \times (\bC^n)^+ \ \to \ (\bC^{m+n})^+$,  
induced by $\bigoplus: \bC^m \times \bC^n \to \bC^{m+n}$.

\begin{prop}
\label{prop:map-cone}
We denote by $|S^{2n}_\bC|$ the total space of simplicial topological space obtained
by applying the analytic topology functor to the simplicial scheme $S^{2n,alg}_\bC$
over $\bC$.

There are natural, $U_n$-equivariant, pointed homotopy equivalences
\begin{equation}
\label{eqn:cone-maps}
(\bC^n)^+ \ \to \ D^{2n}/S^{2n-1} \ \leftarrow cone(S^{2n-1} \to D^{2n}) \ \to
|S^{2n}_\bC|
\end{equation}
from the one-point compactification of $\bC^n$ (with its analytic topology) to the quotient
of the unit disk $D^{2n}$ in $\bC^n$ with its boundary identified to a point, and from the mapping cone
of $S^{2n-1} \to D^{2n}$ to  both $D^{2n}/S^{2n-1}$ and $|S^{2n}_\bC|$.  
Here, $U_n \subset GL_n(\bC)$ is the Lie subgroup
of unitary $n\times n$ matrices.

Moreover, each of these maps commutes with smash products, where the smash product
$|S^{2m}_\bC| \times |S^{2n}_\bC| \ \to \ |S^{2m+n}_\bC|$ 
is determined by the ``algebraic smash product" of Definition \ref{defn:smash-alg}, so that the
compatibility for smash products for the right map of (\ref{eqn:cone-maps})  involves 
the composition
$$cone(S^{2m-1} \to D^{2n})\wedge cone(S^{2n-1} \to D^{2n}) \ \to \ |S^{2m}_\bC| \wedge |S^{2n}_\bC|
\ \to \ |S^{2m+2n}_\bC|.
$$
\end{prop}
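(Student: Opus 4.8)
The plan is to establish the chain of homotopy equivalences (\ref{eqn:cone-maps}) one arrow at a time, verify $U_n$-equivariance at each stage, and then check multiplicativity separately. First I would construct the middle object: the unit disk $D^{2n} \subset \bC^n$ is $U_n$-invariant, its boundary $S^{2n-1}$ is $U_n$-invariant, and the mapping cone $cone(S^{2n-1} \hookrightarrow D^{2n})$ therefore carries a natural $U_n$-action. The map $cone(S^{2n-1} \to D^{2n}) \to D^{2n}/S^{2n-1}$ is the standard collapse of the cone coordinate; it is a pointed homotopy equivalence because the inclusion $S^{2n-1} \hookrightarrow D^{2n}$ is a cofibration, and it is visibly $U_n$-equivariant. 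The map $(\bC^n)^+ \to D^{2n}/S^{2n-1}$ is induced by the $U_n$-equivariant homeomorphism $\bC^n \cong \{ v : |v| < 1\}$ (radial rescaling $v \mapsto v/(1+|v|)$), extended over the point at infinity; this is a pointed homeomorphism, in particular a pointed homotopy equivalence, and radial rescaling commutes with the unitary action since $U_n$ preserves norms.

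Next I would treat the fourth arrow $cone(S^{2n-1}\to D^{2n}) \to |S^{2n}_\bC|$. By Corollary \ref{cor:Frob} and Definition \ref{defn:smash-alg}, $S^{2n,alg}_\bC$ is the simplicial mapping cone of the open embedding $\bA^n_\bC - \{0\} \hookrightarrow \bA^n_\bC$, so after applying the analytic topology functor, $|S^{2n}_\bC|$ is the topological mapping cone (more precisely, the realization of the simplicial space built from $(\bC^n - 0) \times \Delta[1]$ glued to a point and to $\bC^n$). One then uses the standard deformation retractions: $\bC^n - 0$ deformation retracts $U_n$-equivariantly onto $S^{2n-1}$, and $\bC^n$ deformation retracts equivariantly onto $D^{2n}$ (or onto the origin), and these retractions are compatible with the gluing data. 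This produces the $U_n$-equivariant homotopy equivalence $cone(S^{2n-1} \to D^{2n}) \to |S^{2n}_\bC|$. I would phrase this carefully: a map of mapping cones induced by a homotopy-equivalence of pairs $(D^{2n}, S^{2n-1}) \hookrightarrow (\bC^n, \bC^n - 0)$ is a homotopy equivalence, and the pair inclusion is $U_n$-equivariant and an equivariant homotopy equivalence of pairs.

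Finally, multiplicativity. The algebraic smash product of Definition \ref{defn:smash-alg} involves only the simplicial $\Delta[1]$-coordinates together with the direct sum $\bigoplus: \bA^m \times \bA^n \to \bA^{m+n}$; under the analytic topology functor this becomes the evident map on topological mapping cones induced by $\bigoplus: \bC^m \oplus \bC^n \cong \bC^{m+n}$. On the one-point-compactification side the smash product is by definition the map $(\bC^m)^+ \wedge (\bC^n)^+ \to (\bC^{m+n})^+$ induced by the same $\bigoplus$. The maps $(\bC^n)^+ \to D^{2n}/S^{2n-1}$ and the rest are all, up to canonical homotopy, induced by the identity on the underlying vector space coordinates and a rescaling of norms; since $\bigoplus$ does not respect the norm (the norm of $v \oplus w$ is $\sqrt{|v|^2+|w|^2}$, not $|v|\cdot|w|$), the two composites around the relevant square agree only up to a canonical homotopy coming from the contractibility of the space of norms/rescalings — this is the one place I expect to have to say something rather than quote a homeomorphism. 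The main obstacle will be precisely this bookkeeping: pinning down that the rescaling homotopies assemble coherently enough that all four maps in (\ref{eqn:cone-maps}) commute with smash products \emph{on the nose up to the same homotopy}, rather than just individually being homotopy equivalences; I would handle it by choosing the radial-rescaling models uniformly in $n$ so that the rescaling for $\bC^{m+n}$ restricts compatibly, reducing the coherence to the convexity (hence contractibility) of the space of admissible monotone rescaling functions $[0,\infty) \to [0,1)$.
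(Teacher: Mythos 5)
Your proposal is correct and follows essentially the same route as the paper: the radial rescaling $z \mapsto z/(\|z\|+1)$ for the first map, collapse of the cone for the second, identification of $|S^{2n}_\bC|$ with the topological mapping cone of $\bC^n - \{0\} \hookrightarrow \bC^n$ for the third, and a check that the algebraic smash product (which only touches the $\Delta[1]$-coordinates and the direct sum $\bigoplus$) matches the topological one. The only difference is one of emphasis: you spend your care on the norm-rescaling coherence, which the paper dismisses as elementary, while the paper singles out the $\Delta[1]\times\Delta[1]\to\Delta[1]$ diagonal restriction for the cone-to-$|S^{2n}_\bC|$ map as the one point needing comment — a point your plan covers implicitly but could state explicitly.
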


\begin{proof}
The first map of (\ref{eqn:cone-maps}) is the homeomorphism given by sending $z \in \bC^n$ to 
$(1/||z||+1)\cdot z$; the second is the homotopy equivalence given by collapsing the cone
on $S^{2n-1}$ to a point; the third is the homeomorphism determined by construction of 
the total space of a simplicial space.

The compatibility of smash products is elementary topology. 
%except perhaps for the map
%$cone(S^{2n-1} \to D^{2n}) \ \to |S^{2n}_\bC|$.  This is achieved by observing the
%image of $cone(S^{2n-1} \to D^{2n}) \times cone(S^{2n-1} \to D^{2n})$ in 
%$cone(S^{2m+2n-1} \hookrightarrow D^{2m+2n})$  is the restriction to the diagonal
%$\Delta[1] \to \Delta[1]\times \Delta[1]$;
%the composition thus maps to the smash product of cones.
\end{proof}

\vskip .1in

We shall use the ``simplicial bar construction" for topological groups and algebraic groups.  For
the reader's convenience, we specify our notation for these simplicial objects.

\begin{notation}
\label{note:bar}
For a  closed subgroup scheme $\bG_R$ of $GL_{N,R}$ for some 
$N$ (we shall take $R$ to be $\bC$ or 
$W(k)$ or $k$), we denote by $B\bG_R$ the simplicial $R$-scheme obtained by
applying the ``simplicial bar construction"; so defined, $B\bG_R$ has $\Spec R$ in simplicial 
degree 0 and $\bG_R^{\times n}$ in simplicial degree $n > 0$; 
face maps $\bG_R^{\times n} \to \bG_R^{\times n-1}$ are determined by multiplications of adjacent 
copies of $\bG_R$ as well as projections; degeneracy 
maps $\bG_R^{\times n} \to \bG_R^{\times n+1}$ involve projections 
and the identity map $\Spec R \to \bG_R$.   If $X_\bu$ is a simplicial $R$-scheme
with $\bG_R$ acting on each $X_n$ and with each structure map of $X_n \to X_m$ a map of 
$\bG_R$-schemes, then we define $B(\bG_R,X_\bu)$ to be the diagonal of the evident 
bisimplicial $R$-scheme:   $B(\bG_R,X_\bu)_0 = X_0$, \ $B(\bG_R,X_\bu)_n \ = \ 
\bG_k^{\times n}\times_{\Spec R} X_n$, and either the first (or last, depending upon convention
chosen) face map of $B(\bG_R,X_\bu)$ entails the action $\bG_R \times X_n \to X_{n-1}$.

In order to use a parallel construction for Lie groups $G$ acting continuously 
on a simplicial topological 
space $T$, we use the simplicial bar constructions to define $BG$ and $B(G,T)$ as the
total spaces of the simplicial spaces $BG_\bu$ and $B(G,T)_\bu$; here 
$(BG)_0$ is a point and $(BG)_n$ the topological space $G^{\times n}$ for $n > 0$;
$B(G,T)_0$ is the space $T_0$ and $B(G,T)_n$ is the space $G^{\times n} \times T_n$; the face and
degeneracy maps are defined as above.

In what follows, we shall suppess the designation $(-)_\bu$ for simplicial schemes and
simplicial spaces.
\end{notation}

\vskip .1in

The relevance of the preceding corollary to our investigation of sphere fibrations
begins to appear in the following observation.

\begin{prop}
\label{prop:geom-fiber}
Consider the commutative square of simplicial $k$-varieties
\begin{equation}
\label{eqn:S-ell}
\xymatrix{
B(GL_{n,k},S_k^{2n,alg}) \ar[d]_{\tau_n^{alg}} \ar[r]^F  & B(GL_{n,k},S_k^{2n,alg}) \ar[d]^{\tau_n^{alg}} \\
BGL_{n,k} \ar[r]_F & BGL_{n,k}
}
\end{equation}
whose vertical maps are the natural projections.
Then the restriction of \\ $F: B(GL_{,k},S_k^{2n,alg}) \to B(GL_{n,k},S_k^{2n,alg})$ above
the base point $\Spec k \to BGL_{n,k}$ is the Frobenius map $F: S_k^{2n,alg} \to S_k^{2n,alg}$.
\end{prop}

\vskip .2in

%%%%%%%%%%%%%%%%%%%%%%
%%%%%%%%%%%%%%%%%%%%%%

\section{$\bZ/\ell$-completions and rigid \'etale homotopy types}

We begin by specifying the $\bZ/\ell$-completions  we shall use.  Throughout,
$\ell$ will denote a prime number.

\begin{defn}
\label{defn:ell-complete}
We employ the $\bZ/\ell$-completion functor of \cite[X.4.9,X.4.10]{Bo-Kan}
$$(\bZ/\ell_\infty)(-): (s.sets_*) \ \quad \to \quad (\text{Kan cxes}_*)$$
which has the important property that $(\bZ/\ell)_\infty(f): (\bZ/\ell)_\infty(X) \to 
(\bZ/\ell)_\infty(Y)$ is a homotopy equivalence whenever the map $f:X \to Y$
of simplicial sets induces an isomorphism $H_*(X,\bZ/\ell) \ \stackrel{\sim}{\to} \ H_*(Y,\bZ/\ell)$.
Another useful property of $(\bZ/\ell_\infty)(-)$ is that the natural map $(\bZ/\ell_\infty)(X\times Y) 
\to (\bZ/\ell_\infty)(X) \times (\bZ/\ell_\infty)(Y)$ is a homotopy equivalence with
a natural left inverse \cite[I.7.2]{Bo-Kan}.

To simplify notation, we shall typically abbreviate the name of the functor 
$(\bZ/\ell_\infty)(-)$ by setting
$$(-)_\ell \ \equiv \ (\bZ/\ell_\infty)(-): (s.sets_*) \quad \to \quad (\text{Kan cxes}_*).$$ 
\end{defn}

\vskip .1in

Using the properties of $(\bZ/\ell_\infty)(-)$ mentioned in Definition \ref{defn:ell-complete},  
we easily verify that the smash product $S^{2m} \times S^{2n} \ \to \ S^{2m+2n}$ of spheres
(for example, represented by the map of one-point compactifications $(\bC^m)^+ \times (\bC^n)^+
\to (\bC^{m+n})^+$) induces a smash product natural with respect to $m$ and $n$ 
\begin{equation}
\label{eqn:wedge}
(S^{2m})_\ell \times (S^{2n})_\ell \quad \to \quad (S^{2m}\times S^{2n})_\ell
\quad \to \quad (S^{2m+2n})_\ell
\end{equation}
which induces a mod-$\ell$ equivalence \ $(S^{2m})_\ell \times (S^{2n})_\ell  \ \to \  (S^{2m}\times S^{2n})_\ell.$
%whose factorization through $(S^{2m})_\ell \wedge (S^{2n})_\ell \ \to \ (S^{2m}\times S^{2n})_\ell$
%is a mod-$\ell$ equivalence.

\vskip .1in

We recall the $\bZ/\ell$-completed rigid \'etale homotopy type functor as 
utilized in \cite{F80},
$$(-)^\wedge \quad \equiv \quad \underset{\longleftarrow}{holim}(-)\circ 
(\bZ/\ell)_\infty(-) \circ (-)_{ret},$$
which can be viewed as a refinement of mod-$\ell$ \'etale cohomology $H^*_{et}(-,\bZ/\ell)$.  
This functor, more elaborate than the original formulation of \'etale homotopy by 
M. Artin and B. Mazur in \cite{A-M}, is designed to be more ``rigid";  its target is the category 
of simplicial sets rather than the homotopy category of simplicial sets.  We usually
restrict our consideration of schemes to those schemes of finite type over a complete discrete
valuation ring (typically, an algebraically closed field such as $k$ or $\bC$ or the Witt
vectors $W(k)$).

\begin{defn}
\label{defn:wedge}
As discussed in \cite[\S 6]{F82}, the rigid \'etale homotopy type functor 
$$(-)_{ret}: (\text{pointed simplicial schemes}) \quad \to \quad (pro{\text-}bi{\text-}s.sets_*)$$
is a functor from the category of geometrically pointed simplicial schemes to the category 
of inverse systems of pointed bi-simplicial sets.  (See Appendix B of \cite{Bo-F} for
details about bi-simplicial sets.)
For a pointed simplicial scheme $X$, we define 
$$ (X)^\wedge \ \equiv \quad (\underset{\longleftarrow}{holim}(-)\circ 
(\bZ/\ell)_\infty(-)\circ diag(-) \circ (-)_{ret})(X) \ \in \ (s.sets_*),$$
where $\underset{\longleftarrow}{holim}(-): (pro-s.sets_*) \quad \to \quad (s.sets_*)$ is the
Bousfield-Kan homotopy (inverse) limit functor \cite[XI.3.2]{Bo-Kan}.

For a pointed simplicial scheme $X$ of finite type over $\bC$, we denote by
$Sin(X(\bC))$ or $Sin \circ X(\bC)$ \  the diagonal of the bi-simplical set 
$(s,t) \mapsto  Sin_t(X_s(\bC)$, where 
$Sin(X_s(\bC))$ is the singular complex of 
the topological space of complex points of $X_s$ equipped with the 
analytic topology.
%
%We abuse notation by writing $(\bZ/\ell)_{\infty}(T)$ and/or $T_\ell$ for $Sin\circ diag(T)$
%for a simplicial topological space $T$.
\end{defn}

\vskip .1in

We find it useful to observe that $(X)^\wedge$ is always a Kan complex, since 
$(\bZ/\ell)_\infty(-)$ takes values in Kan complexes and $\underset{\longleftarrow}{holim}(-)$
applied to a diagram of Kan complexes is again a Kan complex by \cite[XI.5.2]{Bo-Kan}.

	Properties of $(\bZ/\ell)_\infty(-)$ and $\underset{\longleftarrow}{holim}(-)$ enable 
the following proposition.  The input to this theorem is the comparison of the \`etale topology
and the analytic topology on a complex algebraic variety leading to the ``classical 
comparison theorem" in cohomology of Artin and Grothendieck \cite{SGA4}, subsequently
sharpened to pro-finite homotopy types by Artin and Mazur in \cite{A-M}

\vskip .1in

\begin{prop} (\cite[Cor8.5]{F82})
\label{prop:holim} 
Let $X_\bC$ be a pointed, connected simplicial scheme of finite type over $\bC$.  Then there
is a homotopy equivalence
\begin{equation}
\label{eqn:Phi_X} 
\phi_X: X(\bC)_\ell \quad \stackrel{\sim}{\to} \quad (X_\bC)^\wedge
\end{equation}
natural with respect to pointed maps of simplicial schemes over $\bC$.
\end{prop}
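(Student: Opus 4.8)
The plan is to reduce the statement to the classical comparison between the \'etale and analytic topologies of a complex variety, and then push that comparison through the formal properties of $(\bZ/\ell)_\infty(-)$ and $\underset{\longleftarrow}{holim}(-)$ already assembled in the excerpt; since the result is recorded as \cite[Cor.~8.5]{F82}, the real task is to lay out the shape of the argument.

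First I would construct the natural map $\phi_X$. Each hypercovering $U_\bu \to X_s$ in the \'etale topology of a constituent variety $X_s$ gives, on $\bC$-points, a hypercovering $U_\bu(\bC) \to X_s(\bC)$ in the classical topology, so that hypercover descent supplies a natural, refinement-compatible map $Sin(X_s(\bC)) \to \pi_0(U_\bu(\bC))$. Assembling these over all hypercoverings, over the simplicial coordinate $s$, and passing to diagonals produces a natural map from $Sin(X(\bC))$ into the pro-object $diag\circ(X_\bC)_{ret}$. Applying $(\bZ/\ell)_\infty(-)$ levelwise and then $\underset{\longleftarrow}{holim}(-)$ — and using that $\underset{\longleftarrow}{holim}(-)$ of a constant pro-object on a Kan complex recovers that Kan complex up to homotopy — yields $\phi_X$ as in (\ref{eqn:Phi_X}); naturality in $X$ is automatic from the functoriality of $(-)_{ret}$, $Sin((-)(\bC))$, $diag(-)$, $(\bZ/\ell)_\infty(-)$, and $\underset{\longleftarrow}{holim}(-)$.

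Next I would reduce ``$\phi_X$ is a homotopy equivalence'' to a statement in cohomology. By the defining property of $(\bZ/\ell)_\infty(-)$ recalled in Definition \ref{defn:ell-complete}, it suffices to show that the underlying comparison map induces an isomorphism on $H^*(-,\bZ/\ell)$. Here $H^*((Sin(X(\bC)))_\ell,\bZ/\ell)\simeq H^*(X(\bC),\bZ/\ell)$ is ordinary singular cohomology, since $(\bZ/\ell)_\infty(-)$ preserves mod-$\ell$ cohomology and the diagonal of $Sin_\bu(X_\bu(\bC))$ computes the cohomology of the realization of the simplicial space $X(\bC)$, while $H^*((X_\bC)^\wedge,\bZ/\ell)$ recovers the mod-$\ell$ \'etale cohomology $H^*_{et}(X_\bC,\bZ/\ell)$, this being exactly the design of $(-)^\wedge$ as a refinement of $H^*_{et}(-,\bZ/\ell)$. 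Both sides are computed by the descent spectral sequence along the simplicial coordinate, with $E_1^{s,t}$ the appropriate cohomology of $X_s$; the comparison map is an isomorphism on $E_1$-terms by the classical comparison theorem of Artin--Grothendieck \cite{SGA4}, refined to homotopy types by Artin and Mazur \cite{A-M}, hence on abutments.

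The hard part is the passage from this pro-system-level cohomological isomorphism to a genuine homotopy equivalence of the $\underset{\longleftarrow}{holim}(-)$'s, and this is where the hypothesis that $X_\bC$ be of finite type over $\bC$ is used: the mod-$\ell$ \'etale cohomology groups of such a scheme are finite, which forces the relevant pro-systems to be Mittag--Leffler, so the $\lim^s$-type obstructions in the Bousfield--Kan $\underset{\longleftarrow}{holim}(-)$-spectral sequence \cite[XI.7]{Bo-Kan} collapse, and a map of pro-(Kan complexes) that is an isomorphism on mod-$\ell$ cohomology of the pro-systems becomes a homotopy equivalence after levelwise $(\bZ/\ell)_\infty(-)$ followed by $\underset{\longleftarrow}{holim}(-)$. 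I would emphasize that one never needs an intrinsic description of $(Sin(X(\bC)))_\ell$ — which would be delicate when $X(\bC)$ is not nilpotent; it enters only through the cohomological comparison above.
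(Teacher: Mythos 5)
Your proposal follows exactly the route the paper indicates for this result (which it does not reprove, but simply cites as \cite[Cor.~8.5]{F82}): construct the comparison map via hypercoverings, invoke the Artin--Grothendieck/Artin--Mazur comparison of \'etale and analytic cohomology, and then use the finiteness of mod-$\ell$ \'etale cohomology of finite-type $\bC$-schemes to control the Bousfield--Kan $\underset{\longleftarrow}{holim}$ and the behavior of $(\bZ/\ell)_\infty(-)$ under mod-$\ell$ cohomology isomorphisms. The only point your sketch compresses is the rigidification needed to make $\phi_X$ an actual natural map of simplicial sets rather than a zig-zag through the weak equivalences $|U_\bu(\bC)|\to X_s(\bC)$, which is precisely what the rigid hypercovering machinery of \cite{F82} is designed to supply.
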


\vskip .1in

Fix an embedding $W(k) \hookrightarrow \bC$, where $W(k)$ denotes the Witt
vectors of $k$.  This embedding, together the
quotient map $W(k) \to k$, determines distinguished geometric points $\Spec k 
\to \Spec W(k)$, \ $\Spec \bC \to W(k)$.
%Consider the Witt vectors $W(k)$ of $k$, a complete discrete valuation ring
%with residue field $k$ and fix an embedding $W(k) \hookrightarrow \bC$.   So defined,
%$\Spec W(k)$ is equipped with two distinguished geometric points $\Spec k \to \Spec W(k)$, \ 
%$\Spec \bC \to W(k)$ where the second map is given by some fixed embedding of $W(k)$
%into $\bC$; 
From the point of view of the \`etale topology, $\Spec W(k)$ is contractible.
For many smooth (simplicial) schemes $X_R$ over $R$, the base change maps
$X_\bC \to X_{W(k)}, \ X_k \to X_{W(k)}$ induce isomorphisms in \'etale cohomology
\begin{equation}
\label{eqn:base-change}
H^*_{et}(X_\bC,\bZ/\ell) \  \stackrel{\sim}{\leftarrow} \ H^*_{et}(X_{W(k)},\bZ/\ell) \ 
 \  \stackrel{\sim}{\to} \ H^*_{et}(X_k,\bZ/\ell).
 \end{equation}
 See \cite{Deligne} and \cite{Milne}.
 For example, any scheme $X_\bZ$ which is proper and smooth over $\Spec \bZ$
 enables such base changes isomorphisms, as 
 well as the complement in such a proper, smooth scheme of a divisor with normal
 crossings; another important example for us is a reductive group scheme $\bG_{\bZ}$ over
 $\Spec \bZ$ and the bar construction $B\bG_{\bZ}$ applied to such a reductive
 group scheme.   See, for example, \cite{F82}, \cite{F-Par}.

\vskip .1in

As a consequence of the base changes isomorphisms of (\ref{eqn:base-change}),
one obtains homotopy equivalences of completed \'etale homotopy types.

\begin{prop}
\label{prop:comparison}
Equip $W(k)$ with an embedding into $\bC$.
As shown in \cite{F82}, for simplicial schemes $X_{W(k)}$ over $W(k)$ which
satisfy the etale cohomological base change isomorphisms of (\ref{eqn:base-change}),
base change determines homotopy equivalences
\begin{equation}
\label{eqn:natural} 
(X_\bC)^\wedge  \quad \stackrel{\approx}{\to} \quad
 (X_{W(k)})^\wedge  \quad \stackrel{\approx}{\leftarrow}  \quad (X_k)^\wedge.
 \end{equation}
 
 These equivalences are natural with respect to maps $X_{W(k)} \to Y_{W(k)}$ 
defined over $\Spec W(k)$.
\end{prop}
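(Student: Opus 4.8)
The plan is to reduce the claim to a single principle: for simplicial schemes of the kind under consideration, a map inducing an isomorphism on $\bZ/\ell$-\'etale cohomology induces a homotopy equivalence of $\bZ/\ell$-completed rigid \'etale homotopy types. First I would fix the maps involved. The section $\Spec W(k) \to X_{W(k)}$, composed with the two distinguished geometric points $\Spec k \to \Spec W(k)$ and $\Spec \bC \to \Spec W(k)$, equips $X_{W(k)}$ with geometric points; since $\Spec W(k)$ is \'etale-contractible the resulting based homotopy type is, up to canonical homotopy, independent of this choice. Base change then produces pointed maps of simplicial schemes $X_k \to X_{W(k)}$ and $X_\bC \to X_{W(k)}$, and applying in succession $(-)_{ret}$, $diag$, the levelwise $(\bZ/\ell)_\infty(-)$, and $\underset{\longleftarrow}{holim}(-)$ --- the functors composing $(-)^\wedge$ of Definition \ref{defn:wedge} --- yields maps of pointed Kan complexes $(X_\bC)^\wedge \to (X_{W(k)})^\wedge$ and $(X_k)^\wedge \to (X_{W(k)})^\wedge$. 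Composing one with a homotopy inverse of the other supplies the remaining equivalence $(X_\bC)^\wedge \simeq (X_k)^\wedge$.

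The substance is to check that these two maps are homotopy equivalences. The cohomology of the pro-bi-simplicial set $(X_R)_{ret}$, formed as the colimit over its indexing pro-system, recovers $H^*_{et}(X_R,\bZ/\ell)$, so the hypothesis (\ref{eqn:base-change}) says precisely that the base change maps induce $\bZ/\ell$-\'etale cohomology isomorphisms. I would then combine the two facts recorded in and after Definition \ref{defn:ell-complete}: $(\bZ/\ell)_\infty(-)$ carries a $\bZ/\ell$-cohomology isomorphism of simplicial sets to a homotopy equivalence, and $\underset{\longleftarrow}{holim}(-)$ of a diagram of Kan complexes carries a pro-homotopy equivalence to a homotopy equivalence (\cite[XI]{Bo-Kan}). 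Naturality with respect to a map $X_{W(k)} \to Y_{W(k)}$ over $\Spec W(k)$ is then formal, since base change is natural in the scheme and every functor constituting $(-)^\wedge$ is functorial: the relevant squares of Kan complexes commute strictly and descend to commuting diagrams of homotopy equivalences.

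The genuine obstacle --- and the reason the statement rests on \cite{F82} rather than being purely formal --- is the gap between the \emph{colimit} cohomology isomorphism that (\ref{eqn:base-change}) literally delivers and the \emph{pro-}$\bZ/\ell$-cohomology equivalence of pro-homotopy types that the $\underset{\longleftarrow}{holim}(-)$ step needs; an isomorphism of colimits of cohomology groups is strictly weaker than an isomorphism of the cohomology pro-systems themselves. The remedy is to invoke the homotopy-theoretic refinements of the proper and smooth base change theorems (\cite{SGA4}, \cite{Deligne}, \cite{Milne}) in the style of Artin--Mazur: under the base change hypotheses of (\ref{eqn:base-change}) the induced maps of pro-\'etale-homotopy types are pro-$\bZ/\ell$-cohomology equivalences. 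One then reindexes the relevant pro-systems (using the cofinality and finiteness properties of the systems of rigid hypercovers), completes stagewise via Definition \ref{defn:ell-complete}, and concludes with \cite[XI]{Bo-Kan}. This is exactly the argument carried out in \cite[\S 8]{F82}, to which we refer for the details.
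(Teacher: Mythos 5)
Your proposal is correct and follows essentially the same route as the paper, which gives no separate proof but simply derives the equivalences from the base change isomorphisms (\ref{eqn:base-change}) together with the properties of $(\bZ/\ell)_\infty(-)$ and $\underset{\longleftarrow}{holim}(-)$, deferring the details to \cite{F82}. Your explicit flagging of the gap between the colimit-level cohomology isomorphism and the pro-level statement needed before applying $\underset{\longleftarrow}{holim}(-)$ is exactly the point the citation to \cite[\S 8]{F82} is meant to cover.
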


\vskip .1in

One immediate consequence of Propositions \ref{prop:holim} and \ref{prop:comparison} 
is the following assertion concerning 
the behavior of $(-)^\wedge$ with respect
to products of simplicial varieties.   This property enables us to conclude that applying
$(-)^\wedge$ to certain $\cF$-objects of simplicial schemes yields $\cF$-spaces which are special.

\begin{cor}
\label{cor:product}
Consider pointed
simplicial schemes $X_{W(k)}, \ Y_{W(k)}$ over $\Spec W(k)$ which satisfy the
base change equivalences of (\ref{eqn:natural}).  Assume that $X \times Y$ satisfies
the condition that the natural map 
$H^*(X(\bC),\bZ/\ell) \otimes H^*(Y(\bC),\bZ/\ell)  \ \to \\ H^*(X(\bC) \times Y(\bC),\bZ/\ell)$
is an isomorphism.  

Then  the natural map 
$$(X_R \times  Y_R)^\wedge \ \to \ (X_R)^\wedge \times (Y_R)^\wedge$$
is a pointed homotopy equivalence whenever $R$ equals $k, \ \bC,$ or $W(k)$.   
\end{cor}

\vskip .1in

We interpret the following proposition as asserting that the Frobenius map $F$ induces
a self-map of the $\ell$-completed sphere fibration $(\tau^{alg}_n)^\wedge$ over $(BGL_{n,k})^\wedge$
which restricts to a map of degree $p^n$ on homotopy fibers.

\begin{prop}
\label{prop:hom-fiber}
There is a natural homotopy equivalence from $ (S_k^{2n,alg})^\wedge$ to the 
homotopy fiber of the map obtained by applying $(-)^\wedge$ to the map $\tau_{n,k}^{alg}$:
$$(\tau^{alg}_{n,k})^\wedge: (B(GL_{n,k},S_k^{2n,alg}))^\wedge \ \to \ (BGL_{n,k})^\wedge.$$ 

The commutative square (\ref{eqn:S-ell}) of simplicial $k$-varieties determines
\ $F^\wedge: (\tau^{alg}_{n,k})^\wedge \\ \to \ F^{\wedge*}(\tau^{alg}_{n,k})^\wedge)$ \ whose map on 
homotopy fibers is homotopy equivalent to 
$F^\wedge: (S_k^{2n,alg})^\wedge \ \to \ (S_k^{2n,alg})^\wedge$
which has degree $p^n$.
\end{prop}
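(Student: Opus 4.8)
The plan is to combine the geometric fiber identification of Proposition \ref{prop:geom-fiber} with the good behavior of $(-)^\wedge$ on homotopy fibers and the degree computation of Corollary \ref{cor:Frob}. First I would verify that $(\tau^{alg}_{n,k})^\wedge$ is, up to homotopy, a fibration with homotopy fiber $(S_k^{2n,alg})^\wedge$. Since $\tau^{alg}_{n,k}: B(GL_{n,k},S_k^{2n,alg}) \to BGL_{n,k}$ is the projection of the bar-construction twisted product, it is degree-wise the projection $GL_{n,k}^{\times m}\times S_k^{2n,alg} \to GL_{n,k}^{\times m}$, so on \'etale homotopy types its ``fiber'' should be $(S_k^{2n,alg})^\wedge$. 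The rigorous argument: apply $(-)_{ret}$ and then $(\bZ/\ell)_\infty(-)$ and $\underset{\longleftarrow}{holim}(-)$; the key input is that $(-)^\wedge$ converts the base change over $W(k)$ (Proposition \ref{prop:comparison}) and the K\"unneth property (Corollary \ref{cor:product}) into a statement about the homotopy fiber. Concretely, $B(GL_{n,k},S_k^{2n,alg})$ is the realization of the simplicial object $[m]\mapsto GL_{n,k}^{\times m}\times S_k^{2n,alg}$ over the simplicial object $[m] \mapsto GL_{n,k}^{\times m}$; applying $(-)^\wedge$ level-wise and using Corollary \ref{cor:product} to split the products, one gets a simplicial object which exhibits $(B(GL_{n,k},S_k^{2n,alg}))^\wedge$ as a twisted product over $(BGL_{n,k})^\wedge$ with fiber $(S_k^{2n,alg})^\wedge$; since $(BGL_{n,k})^\wedge$ is connected, the quasi-fibration / spectral sequence criterion (or directly \cite[XI, XII]{Bo-Kan} applied to the tower) gives the homotopy fiber sequence $(S_k^{2n,alg})^\wedge \to (B(GL_{n,k},S_k^{2n,alg}))^\wedge \to (BGL_{n,k})^\wedge$, and the identification is natural since all maps involved are induced by maps of simplicial schemes over $\Spec k$ (indeed over $\Spec W(k)$).

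For the second assertion I would invoke Proposition \ref{prop:geom-fiber}: the commutative square (\ref{eqn:S-elll}) has the property that $F$ restricted over the base point is literally the Frobenius $F: S_k^{2n,alg}\to S_k^{2n,alg}$. Applying the functor $(-)^\wedge$ to (\ref{eqn:S-elll}), naturality of the homotopy-fiber construction (established in the previous paragraph) produces a map $F^\wedge$ of homotopy fibers which, under the chosen homotopy equivalence with $(S_k^{2n,alg})^\wedge$, agrees with $(F: S_k^{2n,alg}\to S_k^{2n,alg})^\wedge$. This is a formal consequence of functoriality of $(-)_{ret}$, $(\bZ/\ell)_\infty(-)$ and $\underset{\longleftarrow}{holim}(-)$, together with the compatibility of the homotopy-fiber sequence with these functors.

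Finally I would extract the degree. By Corollary \ref{cor:Frob}, $F^*$ on $H^{2n}_{et}(S_k^{2n,alg},\bZ/\ell)\simeq \bZ/\ell$ is multiplication by $p^n$, and the lower cohomology groups vanish except in degree $0$ where $F^*$ is the identity. Since $(S_k^{2n,alg})^\wedge$ is (by the cohomology computation in Corollary \ref{cor:Frob} and the properties of $(\bZ/\ell)_\infty(-)$) a $\bZ/\ell$-complete space with the mod-$\ell$ cohomology of $S^{2n}$, hence $\bZ/\ell$-homotopy equivalent to $(S^{2n})_\ell$, a self-map is classified up to homotopy by its effect on $H^{2n}(-,\bZ/\ell)$ (for $\ell$ odd this is immediate; for $\ell=2$ one also checks compatibility with the Steenrod operations, which is automatic since $F^\wedge$ comes from a map of spaces/\'etale homotopy types so commutes with all cohomology operations). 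Therefore $F^\wedge: (S_k^{2n,alg})^\wedge \to (S_k^{2n,alg})^\wedge$ has degree $p^n$, as claimed. The main obstacle I anticipate is the first step --- making precise, in the rigid \'etale setting, that $(-)^\wedge$ applied to the bar-construction projection yields an honest homotopy fiber sequence with the expected fiber, and that this is natural enough to carry the Frobenius square along; once that is in place, the degree statement is just bookkeeping with Corollary \ref{cor:Frob}.
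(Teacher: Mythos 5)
Your outline follows the same strategy as the paper: identify the geometric fiber via Proposition \ref{prop:geom-fiber}, compare it with the homotopy-theoretic fiber of $(\tau^{alg}_{n,k})^\wedge$, carry the Frobenius square through by naturality, and read off the degree from Corollary \ref{cor:Frob}. The one substantive difference is in the step you correctly flag as the main obstacle: the paper does not argue the fiber-sequence claim directly from the bar construction but simply invokes \cite[Thm 10.7]{F82}, which is precisely the ``geometric fiber versus homotopy-theoretic fiber'' comparison for $(-)^\wedge$ applied to maps of simplicial schemes, stated naturally enough to transport the square (\ref{eqn:S-elll}). Your sketch via levelwise K\"unneth splitting and a quasi-fibration argument is pointing at the real content of that theorem, but as written it elides the two genuine difficulties that the citation is there to absorb: $\underset{\longleftarrow}{holim}$ and $(\bZ/\ell)_\infty$ do not commute with realization of simplicial objects in general, and the fiber of a completion agrees with the completion of the fiber only under hypotheses on the base (here, simple connectivity of $(BGL_{n,k})^\wedge$). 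So your proof is correct modulo supplying that reference in place of your first paragraph.

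One small incorrect side-claim in your last paragraph: a self-map of $(S^{2n})_\ell$ is \emph{not} classified up to homotopy by its effect on $H^{2n}(-,\bZ/\ell)$ --- the relevant homotopy classes form $\bZ_\ell$ while endomorphisms of $H^{2n}(-,\bZ/\ell)$ form $\bZ/\ell$, so maps of degree $1$ and $1+\ell$ are cohomologically indistinguishable mod $\ell$ yet not homotopic. Fortunately you do not need any such classification: the assertion ``has degree $p^n$'' is, in the paper's usage, exactly the statement of Corollary \ref{cor:Frob} about $F^*$ on $H^{2n}_{et}$, and the only downstream use (Proposition \ref{prop:F-tau}) needs just that $p^n$ is invertible mod $\ell$, so the map is a homotopy equivalence. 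Drop the classification remark and quote Corollary \ref{cor:Frob} directly.
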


\begin{proof}
By Proposition \ref{prop:geom-fiber}, $S_k^{2n,alg}$ is the geometric fibre of $\tau_{n,k}$.
By \cite[Thm10.7]{F82}, there is a natural $\bZ/\ell$-equivalence from 
$(S_k^{2n,alg})^\wedge$ (the ``homotopy
type of the geometric fiber") to the homotopy fiber of $(\tau_{n,k}^{alg})^\wedge$ (the 
``homotopy-theoretic 
fiber").

The second assertion follows from the naturality of this comparison of geometric and 
homotopy-theoretic
fibers, plus Corollary \ref{cor:Frob} and Proposition \ref{prop:geom-fiber}.
\end{proof}

\vskip .2in

%%%%%%%%%%%%%%%%%%%%%%
%%%%%%%%%%%%%%%%%%%%%%

\section{Some aspects of $\cF$-spaces}
\label{sec:aspects}

We require compatible structures on the collection of $(S_k^{2n,alg})^\wedge$- fibrations 
$(\tau_{n,k}^{alg})^\wedge$ of Proposition \ref{prop:hom-fiber} 
in order to specify a homotopy class of spectra from the spectrum associated to
 $\langle \{  (BGL_{n,k})^\wedge, n\geq 0 \} , \ \bigoplus\rangle$
to the spectrum associated to $\langle\{  BG_\ell(S^{2n}),n\geq 0 \}, \ \wedge \rangle$.  
Here, $G_\ell(S^{2n})$ is the simplicial monoid of $\bZ/\ell$-equivalences
\ $X \to  (\bZ/\ell)_\infty(X) \equiv X_\ell$ 
from some simplicial model $X$ of the $2n$-sphere $X$
(i.e., the geometric realization of $X$ is homotopy equivalent to $S^{2n}$)  to  $X_\ell$. 
%which induce $\bZ/\ell$ self-equivalences of $X_\ell$.

We remind the reader that the category $\cF$ (opposite to Segal's category $\Gamma$)
has objects consisting of the finite pointed sets ${\bf n} = \{ 0,1,\dots,n\}$ and maps 
${\bf m} \to {\bf n} $ which are pointed (i.e., 0-preserving) maps of sets.  A functor from
$\cF$ to a category $\cC$ equipped with a given final-cofinal object which sends
 ${\bf 0}$ to this final-cofinal object is called an $\cF$-object of $\cC$.
Of particular interest is the case in which $\cC$ is the category of pointed simplicial sets; 
in this case, we call such a functor an $\cF$-space.

In \cite{Bo-F}, the category of $\cF$-spaces is provided with the structure of 
a Quillen model category.  We omit explicit discussion of model-theoretic 
aspects (which can be found in \cite{Bo-F}, \cite{F80})
such as fibrant replacement of objects/maps and cofibrant replacement
of objects in the category of $\cF$-spaces.   This Quillen model category 
structure formalizes the concept of homotopy
equivalent maps of $\cF$-spaces and homotopy equivalent $\cF$-spaces.
In \cite{F80}, various constructions involving $\cF$-spaces are implemented using the
technology developed in \cite{Bo-F}.  
%There are adjoint equivalences of Quillen model categories
%relating the category of $\cF$-spaces and the category of spectra as shown in \cite{Bo-F}.  
%If ${\ul \cB}$ is a {\it special } $\cF$-space (also called a Segal $\Gamma$-space),
%then ${\ul \cB}$ provides ${\ul \cB}({\bf 1})$ with a ``homotopy commutative $n$-fold product structure" 
% \ ${\ul \cB}({\bf 1})^{\times n} \approx {\ul \cB}({\bf n)} \to {\ul \cB}({\bf 1})$ for each $n > 0$.
%For ${\ul \cB}$ special,  the $0$-space $|| {\ul \cB} ||_0$ of $|| {\ul \cB} ||$ is a model for the 
%``homotopy-theoretic group completion" of $\ul \cB({\bf 1})$ (see \cite{Mc-Seg});
%for $n > 0$, $n$-th space $|| {\ul \cB} ||_n$ of $|| {\ul \cB} ||$ is $n$-connected 
%with $\Omega^n(|| {\ul \cB} ||_n)$ homotopy equivalent to $|| {\ul \cB} ||_0$ .
%

More details about $\cF$-objects of $\cC$,  where
$\cC$ is a category arising in algebraic geometry,
can be found in \cite[Thm1.3]{FII}. 
As discussed in \cite{Segal}, one can and should consider multiplicative structures for
certain $\cF$-spaces including the ones we consider.   Doing so would enable our
conclusions to be stated in the category of ringed spectra. 

The motivation for the following theorem due to A.K. Bousfield and the author was to 
justify that arguments employed in \cite{F80} and the present paper involving $\cF$-spaces
establish desired properties for associated spectra.

\vskip .1in

\begin{thm} \cite[Thm 5.8]{Bo-F}
\label{thm:homotopy-cat}
There are natural adjoint equivalences relating the  homotopy category of $\cF$-spaces
and the  homotopy category of connective spectra.
\end{thm}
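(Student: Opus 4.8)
The plan is to reconstruct the Bousfield--Friedlander equivalence \cite[Thm 5.8]{Bo-F} in three stages: install matching model structures on the two sides, exhibit the associated-spectrum functor ${\ul\cB}\mapsto ||{\ul\cB}||$ as the left adjoint of a Quillen pair, and then promote that pair to a Quillen equivalence. On the $\cF$-space side I would begin from the \emph{strict} (projective) model structure, in which ${\ul\cA}\to{\ul\cB}$ is a weak equivalence or fibration exactly when each ${\ul\cA}(\mathbf n)\to{\ul\cB}(\mathbf n)$ is one in $(s.sets_*)$; this is cofibrantly generated, with generators $\cF(\mathbf n,-)_+\wedge(\partial\Delta[m]\hookrightarrow\Delta[m])$ obtained by smashing the representable $\cF$-spaces with the generating (trivial) cofibrations of simplicial sets. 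One then passes to the \emph{stable} model structure by declaring a map to be a weak equivalence precisely when it induces an isomorphism on the homotopy groups of associated spectra (equivalently, by left Bousfield localizing the strict structure at a suitable small set of maps), and in parallel one uses the standard stable model structure on spectra of simplicial sets, restricted to its connective part, whose fibrant objects are the connective $\Omega$-spectra. The point of this setup is that, after localization, $||-||$ preserves and reflects weak equivalences essentially by construction.

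Next I would exhibit $||-||$ as left Quillen. Prolonging an $\cF$-space ${\ul\cB}$ by left Kan extension along $\cF\hookrightarrow(s.sets_*)$ gives a functor $\wh{\ul\cB}$ on pointed simplicial sets commuting with filtered colimits; setting $||{\ul\cB}||_n=\wh{\ul\cB}(S^n)$, with structure maps induced by the assembly maps $S^1\wedge\wh{\ul\cB}(K)\to\wh{\ul\cB}(S^1\wedge K)$, recovers Segal's spectrum. Its right adjoint $\Phi$ sends a spectrum $E$ to the $\cF$-space $\mathbf n\mapsto\mathrm{Map}(\bigvee_{\mathbf n}\bS,E)$ (the mapping space of spectra out of a wedge of $n$ copies of the sphere spectrum); since this wedge is finite, $\Phi(E)$ is automatically special, with $\Phi(E)(\mathbf 1)$ the underlying infinite loop space of $E$, so $\Phi$ lands among very special $\cF$-spaces. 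That $||-||$ carries strict cofibrations to cofibrations of spectra is checked on the generators above, and that it carries strict-trivial (hence stable-trivial) cofibrations to stable equivalences is immediate from the construction of the localized structure; thus $(||-||,\Phi)$ is a Quillen pair.

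Finally I would verify the Quillen pair is a Quillen equivalence by checking the derived counit and unit. For the counit $||\Phi(E)||\to E$ with $E$ a connective $\Omega$-spectrum, $\Phi(E)$ is very special and its underlying space $\Omega^\infty E$ is already group complete, so the Segal/McDuff--Segal delooping and group-completion theorem recalled before Theorem \ref{thm:homotopy-cat} (namely $\Omega^n||{\ul\cB}||_n\simeq||{\ul\cB}||_0$ and $||{\ul\cB}||_0\simeq$ group completion of ${\ul\cB}(\mathbf 1)$ for special ${\ul\cB}$, see \cite{Segal, Mc-Seg}) identifies $\pi_k||\Phi(E)||$ with $\pi_kE$ for all $k$; hence the counit is a stable equivalence, which also shows every connective spectrum is hit up to stable equivalence. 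For the unit ${\ul\cB}\to\Phi(R||{\ul\cB}||)$ with ${\ul\cB}$ cofibrant and $R$ a fibrant replacement of spectra, applying $||-||$ and using that $||{\ul\cB}||$ is connective, the $2$-out-of-$3$ property together with the counit computation shows $||{\ul\cB}||\to||\Phi(R||{\ul\cB}||)||$ is a stable equivalence; since $||-||$ reflects stable equivalences, the unit itself is one. These two facts give the asserted adjoint equivalence of homotopy categories.

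The step I expect to be the genuine obstacle is the construction of the \emph{stable} model structure on $\cF$-spaces and the identification of its fibrant objects with the levelwise-fibrant very special $\cF$-spaces: establishing the factorization axioms with the enlarged class of weak equivalences (equivalently, that the relevant left Bousfield localization exists and is well behaved) is not formal, and the crucial input underlying it is Segal's group-completion/delooping theorem, whose proof --- a bar-construction and spectral-sequence comparison of $||{\ul\cB}||_0$ with the group completion of $|{\ul\cB}(\mathbf 1)|$ --- is the technical heart of the whole statement. Once that is in hand, the adjunction, the generator inspections, and the homotopy-group bookkeeping above are routine.
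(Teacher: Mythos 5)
Your outline is correct, and since the paper offers no proof of this statement --- it is quoted verbatim from \cite[Thm 5.8]{Bo-F} --- the relevant comparison is with Bousfield--Friedlander, whose argument you have faithfully reconstructed in modern dress: strict and stable model structures on $\cF$-spaces, the prolongation/evaluation-at-spheres functor as a left Quillen functor with right adjoint $E\mapsto\mathrm{Map}(\bigvee_{\mathbf n}\bS,E)$, and the unit/counit check resting on the Segal--McDuff group-completion and delooping theorem. The only cosmetic difference is that \cite{Bo-F} verify the stable model axioms directly rather than by invoking left Bousfield localization, exactly the point you flag as the technical heart.
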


\vskip .1in

We denote the spectrum associated to an $\cF$-space ${\ul \cB}$ by $||\ul\cB||$.

\vskip .1in

We recall that an $\cF$-space ${\ul \cB}: \cF \to (s.sets_*)$ is said to be special if the product
of projection maps 
\ ${\ul \cB}(p_i): {\ul \cB}({\bf n}) \to \prod_{i=1}^n {\ul \cB}({\bf 1})$  are weak homotopy 
equivalences for each $n > 0$, where $p_i$ is the map induced by the map ${\bf  n} \to {\bf 1}$ in $\cF$
 sending $j \in \{ 0,\ldots,n \}$ to $1$ if $j=i$ and to 0 otherwise.
 As shown by G. Segal in \cite{Segal}, a special $\cF$-space
${\ul \cB}: \cF \to (s.sets_*)$ determines a connective $\Omega$-spectrum 
$||{\ul \cB}||$ (whose $n$-th term is $n-1$-connected).
If an $\cF$-space ${\ul \cB}: \cF \to (s.sets_*)$ is special, then $||{\ul \cB}||_0$ (the 0-space 
of the $\Omega$-spectrum $||{\ul \cB}||$) is a ``topological group completion" of ${\ul \cB}({\bf 1})$. 
(See \cite{Mc-Seg}.)
Indeed, the prolongation of $\ul \cB$ to a functor on finite, pointed simplicial sets 
enables the description of the $n$-th term of the $\Omega$-spectrum $||{\ul \cB}||$ for $n > 0$ 
as $\ul \cB (\Sigma^n)$, where $\Sigma^n$ is a finite simplicial set with $|\Sigma^n|$ 
homotopy equivalent to $S^n$.  The space $\ul \cB (\Sigma^n) \ = \ ||{\ul \cB}||_n$ is 
$n-1$-connected.  See \cite{Bo-F}.

\vskip .1in

\begin{cor}
\label{cor:homotopy-cat}
Two maps $\phi, \psi: {\ul \cB} \to {\ul \cB}^\prime$ of $\cF$-spaces which are 
homotopy equivalent as maps of $\cF$-spaces induce homotopy equivalent maps \ $||\phi||, \ ||\psi||: 
||{\ul \cB}||
\ \to \ ||{\ul \cB^\prime}||$ of spectra.
\end{cor}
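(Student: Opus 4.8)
The plan is to deduce this as a formal consequence of the adjoint equivalence of Theorem \ref{thm:homotopy-cat}. First I would recall that the passage $\ul\cB \mapsto ||\ul\cB||$ from $\cF$-spaces to connective spectra is one of the two functors implementing this equivalence (or is naturally equivalent to such a functor after fibrant replacement). In particular, $||-||$ is a functor on the level of homotopy categories: it sends a morphism in the homotopy category of $\cF$-spaces to a well-defined morphism in the stable homotopy category of connective spectra. So the essential point is merely that ``homotopy equivalent as maps of $\cF$-spaces'' means precisely ``equal as morphisms in the homotopy category of $\cF$-spaces,'' whence their images under $||-||$ coincide as morphisms in the stable homotopy category, which is exactly the assertion that $||\phi||$ and $||\psi||$ are homotopy equivalent maps of spectra.

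Concretely, I would proceed as follows. Replace $\ul\cB$ and $\ul\cB'$ by fibrant-cofibrant models in the Bousfield--Friedlander model category of $\cF$-spaces (citing \cite{Bo-F}), noting that $||-||$ is homotopy-invariant and so this does not change the relevant homotopy classes of maps. A homotopy $\phi \simeq \psi$ is then a map $H: \ul\cB \wedge \Delta[1]_+ \to \ul\cB'$ (or, dually, a map into a path-object $(\ul\cB')^{\Delta[1]}$), restricting to $\phi$ and $\psi$ at the two endpoints. Applying the functor $||-||$ — which preserves weak equivalences and takes the cylinder/path objects of $\cF$-spaces to cylinder/path objects of spectra, since it is part of a Quillen equivalence — produces a homotopy $||H||$ between $||\phi||$ and $||\psi||$ in the category of connective spectra. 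That is the desired conclusion.

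The only genuine point requiring care — and what I expect to be the main (albeit minor) obstacle — is the compatibility of the explicit construction $\ul\cB \mapsto ||\ul\cB||$ used throughout the paper (the spectrum whose $n$-th space is $\ul\cB(\Sigma^n)$) with the abstract derived functor furnished by Theorem \ref{thm:homotopy-cat}. One must know that $||-||$ as defined here is a left (or right) Quillen functor, or at least that it carries weak equivalences of (fibrant) $\cF$-spaces to stable equivalences of spectra and is suitably compatible with the interval object $\Delta[1]_+$; this is exactly the content of \cite{Bo-F}, so I would invoke that reference for the point. Granting it, the corollary is immediate: homotopic maps of $\cF$-spaces have homotopic images, because any functor between model categories that preserves weak equivalences and cylinder objects preserves left homotopy of maps.
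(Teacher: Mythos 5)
Your proposal is correct and follows essentially the same route as the paper: both deduce the statement formally from Theorem \ref{thm:homotopy-cat} together with the Bousfield--Friedlander machinery of \cite{Bo-F}. The only additional specificity in the paper's proof is that it names the very-special replacement functor $T$ (with its natural stable equivalence $\mathrm{id} \to T$) from the proof of \cite[Thm 5.2]{Bo-F} as the device reconciling the explicit construction $||-||$ with the derived functor --- precisely the compatibility point you flag and defer to the same reference.
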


\begin{proof}
In the proof of \cite[Thm 5.2]{Bo-F}, a functor $T: (\cF{\text-spaces}) \ \to \ (\cF{\text-spaces})$ is
constructed with the property that for every ${\ul \cB} \in (\cF{\text-spaces})$ the $\cF$-space
$T({\ul \cB})$ is very special (i.e., special and satisfies the 
condition that $\pi_0(T({\ul \cB})({\bf 1}))$ is a group).
Moreover, $T$ is equipped with a natural transformation $id \to T$ with the property that for 
every ${\ul \cB} \in (\cF{\text-spaces})$ the map ${\ul \cB} \to T({\ul \cB})$ is a 
``stable equivalence" of $\cF$-spaces.
Consequently, the corollary follows from Theorem \ref{thm:homotopy-cat}.
\end{proof}

\vskip .1in

\begin{ex}
\label{ex:cN}
We denote by $\ul \cN: \cF \to (sets_*)$  the functor sending ${\bf n}$ to the discrete
pointed simplicial set $\bN^{\times n}$ (the pointed set of $n$-tuples of non-negative integers)
and sending $\alpha: {\bf n} \to {\bf m}$ in $\cF$ to the pointed set map
$\bN^{\times n} \to \bN^{\times m}$ which sends the $n$-tuple $I=(i_1,\dots,i_n)$ to the $m$-tuple
$\alpha(I) = (j_1,\dots,j_m)$ where $j_t = \sum_{\alpha(s)= t} i_s$.

Thus, the spectrum $|| \ul\cN ||$ is given by $|| \ul\cN ||_0 = (\bN)^+ = K(\bZ,0)$ and
 \ $|| \ul\cN ||_n = K(\bZ,n)$
for $n > 0$.
\end{ex}

\vskip .1in

If $\ul\cB$ is an $\cF$-space over $\ul\cN$, then we denote by $\ul\cB_I \ \subset \ul\cB({\bf n})$
the pre-image of $I \in \ul\cN({\bf n})$.  For $n = 1$ and $I = \{ i \}$, we shall denote
$\ul\cB_{\{ i \} }$ by $\ul\cB_i$.
 
As seen in \cite{Segal} (see also \cite{May}), topological permutative categories determine $\cF$-spaces
over $\ul\cN$.

\vskip .1in

We have the evident criterion for a map ${\ul \cB} \ \to \ {\ul \cB^\prime}$ to induce a homotopy equivalence
of spectra.
This follows from a comparison of Quillen model category structures of $\cF$-spaces and on
spectra as given in \cite{Bo-Kan}.  More concretely, this can be proved using the explicit construction
given in \cite{Segal} of the spectrum  $||{\ul \cB}||$ associated to an $\cF$-space ${\ul \cB}$.

\begin{prop}
\label{prop:obvious}
If a map $\phi: {\ul \cB} \ \to \ {\ul \cB^\prime}$ of $\cF$-spaces over $\ul \cN$ 
satisfies the condition that
$\phi_I: {\ul \cB}_I \ \to \ {\ul \cB^\prime}_I$ is a weak homotopy equivalence 
for all $I \in {\ul \cN}(\bf n)$, 
then $\phi$ is a homotopy equivalence of $\cF$-spaces.
\end{prop}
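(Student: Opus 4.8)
The plan is to reduce the statement to the elementary observation that, because the base $\cF$-space $\ul\cN$ is levelwise discrete, each space $\ul\cB(\mathbf n)$ is literally the disjoint union of its ``fibers'' $(\ul\cB)_I$, and then to invoke the fact (from the model structure of \cite{Bo-F}, or directly from Segal's construction of $||\ul\cB||$) that a levelwise homotopy equivalence of $\cF$-spaces induces a homotopy equivalence of the associated spectra.

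First I would fix an object $\mathbf n$ of $\cF$. Since $\ul\cN(\mathbf n) = \bN^{\times n}$ is a discrete pointed simplicial set, the structure map $\ul\cB(\mathbf n) \to \ul\cN(\mathbf n)$ exhibits $\ul\cB(\mathbf n)$ as the disjoint union $\coprod_{I \in \bN^{\times n}} (\ul\cB)_I$ of its simplicial subsets $(\ul\cB)_I$ (the preimages of the points $I$), the basepoint being a vertex of the summand $(\ul\cB)_0$; similarly for $\ul\cB'(\mathbf n)$. Because $\phi$ is a map of $\cF$-spaces \emph{over} $\ul\cN$, the map $\phi(\mathbf n)$ preserves these decompositions, so under the above identifications $\phi(\mathbf n) = \coprod_{I} \phi_I$. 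A disjoint union of weak homotopy equivalences of simplicial sets is again a weak homotopy equivalence (check it componentwise: on $\pi_0$ and then on the higher homotopy groups of each component), so the hypothesis that each $\phi_I$ is a homotopy equivalence yields that $\phi(\mathbf n)\colon \ul\cB(\mathbf n)\to \ul\cB'(\mathbf n)$ is a homotopy equivalence for every $\mathbf n$. Thus $\phi$ is a levelwise (``strict'') homotopy equivalence of $\cF$-spaces.

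To finish I would recall that a levelwise homotopy equivalence of $\cF$-spaces is a stable equivalence, hence an isomorphism in the stable homotopy category of $\cF$-spaces, so that by Theorem \ref{thm:homotopy-cat} the induced map $||\phi||\colon ||\ul\cB|| \to ||\ul\cB'||$ is a homotopy equivalence of spectra. Alternatively, and more explicitly: prolong $\ul\cB$ to finite pointed simplicial sets by $\ul\cB(K) = \mathrm{diag}\bigl(m \mapsto \ul\cB(K_m)\bigr)$, each $K_m$ being a finite pointed set; the previous paragraph makes $m \mapsto \phi(K_m)$ a levelwise weak equivalence of bisimplicial sets, so $\ul\cB(K) \to \ul\cB'(K)$ is a weak equivalence by the realization (diagonal) lemma, and taking $K = \Sigma^n$ shows each $||\phi||_n$ is a weak equivalence, whence $||\phi||$ is a homotopy equivalence of spectra.

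I do not anticipate a genuine obstacle here; this is the ``evident criterion'' alluded to just above the statement. The only points calling for a little care are the interpretation of ``homotopy equivalence'' as weak homotopy equivalence for the possibly non-Kan simplicial sets $(\ul\cB)_I$, and the two standard stability facts used (that disjoint unions, and diagonals of bisimplicial sets, preserve weak equivalences); both are routine and neither affects the structure of the argument.
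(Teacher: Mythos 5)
Your proposal is correct and follows essentially the same route the paper indicates (the paper only sketches this, pointing to the model-structure comparison of \cite{Bo-F} and, "more concretely," to Segal's explicit construction of $||\ul\cB||$ — exactly the two alternatives you give). The one detail you add beyond the paper's sketch, namely that discreteness of $\ul\cN(\mathbf n)$ makes $\ul\cB(\mathbf n) = \coprod_I (\ul\cB)_I$ so that fiberwise equivalences assemble to a levelwise equivalence, is the right observation and is handled correctly.
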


\vskip .1in

In what follows, we use $X^n$ to denote the $n$-fold smash product of the pointed, connected
simplicial set $X$ (with $X^0 = \Delta[0]$, so that $|X_0|$ is a single point).

We remind the reader that $Sin(T)$ is a Kan complex for any topological space.  The
topological spaces we consider are compactly generated Hausdorff spaces.
To avoid the awkwardness that smash products of Kan complexes need not be
Kan complexes, we consider  monoids of weak equivalences $X^i \to Sin(|X^i|)$
rather than monoids of self-equivalences of Kan complexes.

\vskip .1in

\begin{defn}
\label{defn:G(X)}
Let $X$ be a pointed simplicial set with geometric realization $|X|$.  We consider the 
function complex $\ul{Hom}_*(X,Sin(|X|))$ whose $t$-simplices are simplicial maps
$X \times \Delta[t] \to Sin(|X|)$ sending $x_0 \times \Delta[t]$ to the degeneracy of the 
base point $x_0$ in $Sin_0(|X|)$.  The adjunction $| - | \circ Sin(-) \to id$ equips 
$\ul{Hom}_*(X,Sin(|X|))$ with a simplicial monoid structure.  

We denote by $G(X) \ \subset \ \ul{Hom}_*(X,Sin(|X|))$ 
the (simplicial) submonoid consisting of components of $G(X)$ whose images in 
the discrete monoid $\pi_0 (\ul{Hom}_*(X,Sin(|X|)))$ are invertible, and we
denote by  $G^o(X) \subset G(X)$ the distinguished component containing 
the canonical map $X \to Sin(|X|)$ (adjoint to the
identity of $|X|$).

The adjunction equivalence \ $\ul{Hom}_*(X,Sin(|X|)) \ \stackrel{\sim}{\to} \ Sin(\ul{Hom}_*(|X|,|X|)$
implies that $G(X)$ is naturally isomorphic to the singular complex of the 
H-space (with the compact open topology) of pointed homotopy self-equivalences of $|X|$.  (See,
for example, \cite[Thm 6.1]{May67}.)
\end{defn}

\vskip .1in

\begin{ex}
\label{ex:sphere-spectrum}
If $|X|$ has the homotopy type of the (pointed) $n$-sphere $S^n$, then $G(X)$ is the simplicial version 
of the mapping space $F(n+1)$ considered by J. F. Adams in \cite[\S2]{Adams} of base point 
preserving maps from $S^n$ to itself.  Thus, for $|X|$ having the homotopy type of $S^n$,
$G(X)$ is a simplicial model for $\Omega^n(S^n)$.
\end{ex}

\vskip .1in

 The $\cF$-space $\ul \cB(X)$ constructed below is a special $\cF$-space
by (\ref{eqn:explicitly}).

\begin{construct}
\label{construct:G(X)}  (\cite[Ex5.2]{F80}, \cite[\S 2]{Segal}))
Let $X$ be a connected, pointed simplicial set.  
Following Segal, the classifying $\cF$-space
\ $\ul\cB G(X)$\ over $\ul\cN$ is constructed with \ 
$\ul\cB G(X)({\bf 1}) \ = \ \coprod_{n\geq 0} BG(X^n)$,
where $BG(X^n)$ is the diagonal of the simplicial bar construction applied to the 
simplicial monoid $G(X^n)$.
%
%From Segal's perspective, $\ul\cB G(X)({\bf n})$ arises
%as the nerve of the following topological category (``with composition law"
%being smash products).  The objects of this topological category are indexed by $I = (i_1,\ldots,i_n) \in \ul\cN({\bf n})$; 
%and for a given $I$, the object indexed by $I$ consists identifications of  $X^T$ with $\prod_{0\not=s \in T} X^{i_s}$ 
%(as a product of pointed spaces) for each we each pointed subset $T \subset {\bf n}$.  A map in this topological
%category  a choice of identification  of 
%$X^T$ with $\prod_{0\not=s \in T} X^{i_s}$ (as a product of pointed spaces); thus,
%we can think of the object indexed by $I$ as a diagram with vertices indexed pointed subsets $ and maps given by 
%these identifications a map between two such objects (necessarily,
%indexed by the same $I$) is a choice of (pointed) weak equivalence $X^{i_s } \to Sin(|X^{i_s}|)$ 
%for each $1 \leq s \leq n$
%(which determines weak equivalences $X^{i_T} \to Sin(|X^{i_T}|)$).
%Let $\alpha: {\bf n} \to {\bf m}$ be a map in $\cF$, $I \in \ul\cN({\bf n})$ as above, and set
% $J = (j_1,\ldots,j_m) \in \ul\cN({\bf m})$ with $j_t$ for some $t, 1 \leq t \leq m$, 
% equal to the sum of those $i_s$ such that $\alpha(s) = t$.  Then 
% $\alpha :\ul\cB G(X)({\bf n}) \to \ul\cB G(X)({\bf m})$ sends a diagram
%indexed by $I$ as above to the subdiagram of this diagram indexed by $J = (j_1,\ldots,j_m) \in \ul\cN({\bf m})$.

We can make this explicit as follows.  The restriction of $\ul\cB G(X)$ above 
$I = (i_1,\ldots,i_n) \in \ul\cN({\bf n})$ is given by 
\begin{equation}
\label{eqn:explicitly}
(\ul \cB G(X))_I \ = \ \prod_{j=1}^n BG(X^{i_j}) \times 
\prod_{T \subset {\bf n} } B(Iso(X^{i_T}),Iso(X^{i_T})) \ ,
\end{equation}
where $T$ runs over pointed subsets of ${\bf n}$  with more than one non-zero element 
and $i_T = \sum_{0 \not= j \in T} i_j$.   Here, $Iso(X^{i_T})$ is the function complex of 
pointed automorphisms of $X^{i_T}$ (preserving base points).  
A non-decreasing map $\alpha: {\bf n} \to {\bf m}$ sending $I \in \ul\cN({\bf n})$ to 
$J \in \ul\cN({\bf m})$ determines
$(\ul \cB G(X))(\alpha): (\ul \cB G(X))_I \to (\ul \cB G(X))_J$ given by smash 
products.   

For example, taking 
$I = (i,j) \in \ul\cN({\bf 2})$ and $\alpha: {\bf 2} \to {\bf 1}$ sending
both $1, \ 2 \in  {\bf 2}$ to $1 \in {\bf 1}$, this becomes
$$(\ul \cB G(X))(\alpha): BG(X^{i}) \times BG(X^{j}) \times B(Iso(X^{i+j}),Iso(X^{i+j})) 
\quad \to \quad BG(X^{i+j})$$
given in simplicial degree $t$ by sending $(g_1,\ldots,g_t), (g_1^\prime,\ldots,g_t^\prime), (h_1,\ldots,h_{t+1})$
to $(\ldots,h_i^{-1}\circ (g_i\wedge g_i^\prime)\circ h_{i+1}, \ldots)$.
This is a map homotopy equivalent to the smash product $BG(X^{i}) \times BG(X^{j}) \ \to
\ BG(X^{i+j})$ which remains unchanged when composed with the permutation
$\sigma: (i,j) \mapsto (j,i)$.
%The map $(\ul \cB G(X))(\sigma): (\ul \cB G(X))_I \to (\ul \cB G(X))_J$ 
%for an arbitrary permutation $\sigma: {\bf n} \stackrel{\sim}{\to} {\bf n}$
%is given by permuting the first set of factors indexed by ${\bf n} = \{1,\ldots,n\}$ and 
%conjugating by $\sigma$ on the second set of factors
%indexed by subsets $T \subset {\bf n}$.
\end{construct}

\vskip .1in

%Using Proposition \ref{prop:pi*S}(1) and the fact that the 0-space of $\Omega$ spectrum $||\ul \cB G(X)||$
%is a homotopy-theoretic group completion of $\cB G(X)(\bf 1)$, we conclude the following corollary.
%
%\begin{cor}
%\label{cor:stable-stem}
%Assume that the pointed simplicial set $X$ satisfies the condition that $|X|$ is homotopy equivalent
%to $S^2$.  Then the $0$-space of the $\Omega$ spectrum $||\ul \cB G(X)||$ \ is homotopy 
%equivalent to $\bZ \times \varinjlim_m BG(X^{2m})$
%
%Consequently, $\pi_{i+1}(||\ul \cB G(X)|| \ = \ \pi_i^{stable}(S^0) \ = \ \pi_i({\bf S})$,
%where ${\bf S}$ is the sphere spectrum.
%\end{cor}
%
%\vskip .1in

We make explicit the ``universal $X$-fibration" $\pi_X: \ul \cB (G,X) \to \ul \cB(X)$ constructed
in \cite[Ex5.3]{F80}.

\vskip .1in

\begin{construct}
\label{construct:X-action}  
We continue with the notation and constructions of Construction \ref{construct:G(X)}. 
Using the action of 
$G(X^n)$ on $Sin(|X^n|)$, we consider 
%$\coprod_{n\geq 0} B(G(X^n),Sin(|X^n|)) $ and 
% the special $\cF$-space \ $\ul \cB (G(X),X)$ over $\ul \cN$ with 
$$(\ul \cB (G(X),X))_I \ = \ \prod_{j=1}^n B(G(X^{i_j}),Sin(|X^{i_j}|)) \times 
\prod_{T \subset {\bf n} } B(Iso(X^{i_T}),Iso(X^{i_T}))$$
and the evident projection
$$\pi_{X,I}: (\ul \cB (G(X),X))_I \quad \to \quad (\ul \cB G(X))_I.$$
For $\alpha: \ul 2 \to \ul 1$
sending both $1, 2 \in \ul 2$ to $1 \in \ul 1$ this map is determined by
$$B(G(X^{i_1}),Sin(|X^{i_1}|)) \times B(G(X^{i_2}),Sin(|X^{i_2}|)) \quad \to \quad
B(G(X^{i_1+i_2}),Sin(|X^{i_1+i_2}|))$$
given by the natural factorization of the smash product $Sin(|X^{i_1}|) \times Sin(|X^{i_2}|)
\to Sin(|X^{i_1}|)\wedge Sin(|X^{i_2}|)$ through $Sin(|X^{i_1+i_2}|)$.
For any $I \in \ul\cN(\ul n)$ and any $\alpha: {\ul n} \to {\ul m} \in \cF$ sending $I$
to $J \in \ul\cN(\ul m)$, we define $\pi_{X,I} \to \pi_{X,J}$ extending this pairing.
Consequently, the maps $\pi_{X,I}$ determine a map of special $\cF$-spaces
\begin{equation}
\label{eqn:pi-univ}
\pi_X: \ul \cB (G(X),X) \quad \to \quad \ul \cB G(X).
\end{equation}
%
%
%The definition of \ $(\ul \cB (G(SX),X))(\alpha): (\ul \cB (G(SX),X))({\bf n}) \to (\ul \cB (G(SX),X))({\bf n})$ \
%for any map $\alpha: {\bf n} \to {\bf m}$ in $\cF$ is defined as for $(\ul \cB G(X))(\alpha)$.
%The map induced by $\alpha: {\bf 2} \to {\bf 1}$ sending both $1, \ 2 \in  {\bf 2}$ to $1 \in {\bf 1}$ is given by 
%smashing a pair of maps $X^i \times \Delta[n] \to Sin(|X^i|), \ X^j \times \Delta[n] \to Sin(|X^j|)$ to get a map 
%$X^{i+j} \times \Delta[n] \to Sin(|X^{i+j}|)$ and smashing a pair of simplices $\Delta[n] 
%\to Sin(|X^i|),\ \Delta[n] \to Sin(|X^j|)$
%to obtain a simplex $\Delta[n] \to Sin(|X^{i+j}|)$.  For this to be well-defined, we need the 
%compatibility of 
%smash product of maps  $X^i \to Sin(|X^i|),\ X^j \to Sin(|X^j|)$ and smash products of simplices of 
%$Sin(|X^i|)$ and $Sin(|X^j |)$.
\end{construct}

\vskip .1in

\begin{remark}
\label{rem:orient}
Assuming $X$ is a pointed connected simplicial set such that permuting factors of  the smash product 
$X^n$ for each $n > 0$ is weakly homotopic to the identity, one obtains the ``oriented analogue"
\begin{equation}
\label{eqn:pi-univ-or}
\pi_{X,o}: \ul \cB (G^o(X),X) \quad \to \quad \ul \cB G^o(X).
\end{equation}
To do so, one replaces
 $G(X^n)$ by $G^o(X^n)$ 
and $Iso(X^{i_T})$ by $Iso^o(X^{i_T})$ in the preceding constructions.
\end{remark}

\vskip .2in

%%%%%%%%%%%%%%%%%%%%%%
%%%%%%%%%%%%%%%%%%%%%%

\section{$X$-fibrations over $\cF$-spaces}
\label{sec:X-fib}

We present the definition of an $X$-fibration  in Definition \ref{defn:X-fibration},
a correction of that in  \cite[Defn3.2]{F80} using a construction of 
Bhattacharya and Kitchloo in \cite[Defn 3.11]{B-K}.  This 
revision imposes a ``compatibility of 
smash product" condition for each $I \in \ul\cN(\bf n)$ given in Definition \ref{defn:X-fibration}(3), 
thereby implying that  $\alpha: \ul\cE({\bf n})_I \to \ul\cE({\bf m})_{\alpha(I)}$ induces the
designated smash product on fibers over $\alpha: \ul\cB({\bf n})_I \to \ul\cB({\bf m})_{\alpha(I)}$
for any $\alpha: {\bf n} \ \to \ {\bf m} \ in \ \cF$.  The requirement 
of Definition \ref{defn:X-fibration} that $f: \cE \to \cB$ is equipped with a Reedy sectioning 
as in Definition \ref{defn:Reedy} is required for the 
``principalization" construction of Construction \ref{construct:principal-ell-spaces} 

The main result of this section is Theorem \ref{thm:X-universal} which tells us that the functor
sending an $\cF$-space $\ul\cB$ over $\ul\cN$ to the set of of fiber homotopy equivalence
classes of $X$-fibrations over $\ul\cB$ is representable, given by pull-backs along 
homotopy classes of maps $f: \ul\cB \to \ul\cB G(X)$ of the universal $X$-fibration
of (\ref{eqn:pi-univ})
\vskip .1in

\begin{defn} \cite[Defn 3.11]{B-K}
\label{defn:Reedy}
Let $f: \ul\cE \ \to \ \ul\cB$ be a map of special $\cF$-spaces of $\ul\cN$.  A Reedy
sectioning of $f$ is the choice for eachl $n > 0, \ I=(i_1,\ldots,i_n) \in \cF({\bf n})$, and
subset $S \subset I$ of a subspace \ $\cE_{I,S} \subset \cE_I$ \ satisfying 
the following conditions:
\begin{enumerate}
\item
If $S = I$, then  $\cE_{I,S} \ = \  \cE_I$; \ if $S = \emptyset$, then  $\cE_{I,S} \ = \  \cB_I$.
\item
For a given $I$ and varying subsets $S \subset I$, the embedding  
$\cE_{I,S} \subset \cE_I$ is natural with respect to inclusions and
retractions of subsets.
\item
For $\alpha: {\bf n} \to {\bf m}$, set $\alpha_I(S) \ \subset \ \alpha(I) = (j_1,\ldots,j_m) \in \cF({\bf m})$
equal to the subset of $\alpha(I)$ of those $j_t \equiv \sum_{\alpha(s)=t} i_s$ such that 
every $s \in I$ with $\alpha(s) = t$ lies in $S$.  Then $\alpha: \ul\cE_I \to \ul\cE_{\alpha(I)}$
restricts to $\ul\cE_{I,S} \ \to \ \ul\cE_{\alpha_I,\alpha(S)}$.  

\end{enumerate}
\end{defn}

We define an $X$-fibration over $\ul\cB$ revising the definition of \cite[Defn 3.2]{F80}.  

\begin{defn}
\label{defn:X-fibration}
Let $X$ be a pointed, connected simplicial set.
An $X$-fibration 
is a map  \ $f: \ul\cE \ \to \ \ul\cB$ \ of special $\cF$-spaces 
over $\ul \cN$ equipped with a Reedy sectioning
satisfying the following conditions:
\begin{enumerate}
\item
For each $n > 0, \ I \in \cF({\bf n})$, $f_I: \ul\cE_I\to \ul\cB_I $ is a fibration with fibers which
are pointed homotopy equivalent to $Sin(|X^I|)$.
\item
For any ${\bf  n} \in \cF$ and $I = (i_1,\ldots,i_s, \ldots i_n) \in \ul\cN({\bf n})$, the projection maps
maps $p_i: {\bf n} \to {\bf 1}$ (sending $s \not= i$ to 0 and sending $i$ to 1) induce a weak homotopy
equivalence $\rho_I: \ul \cE_I \to \prod_{s=1}^n {\ul\cE}_{i_s} \times_{ \prod {\ul\cB}_{i_s} } {\ul\cB}_I$.
\item
For any $\alpha: \bf n \to \bf m$ in $\cF$,  $I = (i_1,\ldots,i_n)\in \ul\cN(\bf n)$
and any $0 \not= t \in \bf m$,
let $I^{-1}(t) \subset \ul\cN(\bf r)$ denote the ordered subset of $I$  consisting of those entries $i_s$ of $I$
such that $\alpha(s) = t$.   Denote by $\beta: \bf n \to \bf r$ the surjective map
sending $i_s$ to 0 if $i_s \notin I^{-1}(t)$ and otherwise preserving the ordering.  Denote by
$\mu: \bf{r} \to \bf 1$ the map sending each $0 \not= i \in  \bf r$ to $1 \in \bf 1$.
Consider the commutative square
\begin{equation}
\begin{xy}*!C\xybox{%
\xymatrix{  \ul\cE_I \ar[r]^-{\alpha} \ar[d]^{\beta} & \ul\cE_{I^{-1}(t)} \ar[d]^{p_t} \\
\ul\cE_{I^{-1}(t)} \ar[r]^-{\mu} & \ul\cE_t \ .
 }
}\end{xy}
\end{equation}
We require that $\mu:  \ul\cE_{I^{-1}(t)} \ \to \ul \cE_t$  restricted to fibers over 
$\ul\cB_{I^{-1}(t)}   \stackrel{\mu}{\to} \ul\cB_t$ is
homotopy equivalent to the  smash product  $Sin| X^{I^{-1}(t) }|) \ \to \ Sin|X^t|.$
\end{enumerate}

If $X$ satisfies the condition that permuting factors of  the smash product 
$X^n$ for each $n > 0$ is weakly homotopic to the identity and if $f: \ul\cE \to \ul\cB$ is an
$X$ fibration, then we say that $f$ is an oriented $X$-fibration if the pointed homotopy equivalences 
of (1) are all homotopy equivalent to the appropriate identities and the weak homotopy equivalences
$\rho_I$ of (2) restrict on fibers to maps which are homotopy equivalent  to the identity.
\end{defn}

\vskip .1in
The role of Reedy sectioning of an $X$-fibration becomes apparent with the following
basic example.

\begin{ex}
\label{ex:Reedy}
Set $\ul\cB \ = \ \ul\cN$ and define $f: \ul\cE \ \to \ \ul\cN$ by setting $\ul\cE_I = X^I \equiv X^{i_1} \times \cdots \times X^{i_n}$
for any $n > 0, \ I = (i_1,\ldots,i_n) \in \cF({\bf n})$. For $S = \{t_1\ldots, t_s\} \subset I$, define $\ul\cE_{I,S} \hookrightarrow \ul\cE_I$ 
to be the inclusion of the partial product $X_{I,S} \equiv X^{t_1} \times \cdots \times X^{t_s} \hookrightarrow X^I$.  
If $\alpha: {\bf n} \to {\bf m}$ in $\cF$ sends
$I$ to $J = (j_1,\ldots,j_m)$ with $j_t = \sum_{s:\alpha(s) = t} i_s$ and if $S_{j_t} = \{s:\alpha(s) = t\} \subset I$, then 
$\alpha: X^I \ \to \ X^{\alpha(I)}$ sends each $X_{I_,S_t} \subset X^I$ to the base point of $X^{\alpha(I)}$.
\end{ex}

\vskip .1in

\begin{ex}
\label{ex:pi-X}
For any pointed, connected simplicial set $X$,  
the  map of $\cF$-spaces of (\ref{eqn:pi-univ})
is an $X$-fibration (which we call the ``universal $X$-fibration"). 

Similarly, the map of $\cF$-spaces of (\ref{eqn:pi-univ-or}) is an oriented $X$-fibration
(which we call the ``universal oriented $X$-fibration"). 
\end{ex}

\vskip .1in

\begin{defn}
\label{defn:X-map}
Let $X$ be a pointed, connected simplicial set.
Consider a map $\phi: \ul\cB \ \to \ \ul \cB^\prime$ of special $\cF$-spaces over
$\ul\cN$, and $X$-fibrations $f: \ul\cE \to \ul \cB, \ f^\prime: \ul\cE ^\prime\to \ul \cB^\prime$
equipped with given Reedy sections.
A map $\phi: f \ \to \ f^\prime$  of $X$-fibrations over $\phi$ consists of a commutative square of
$\cF$-spaces over $\ul\cN$ restricting to Reedy sections which restricts to fiber homotopy equivalences
\begin{equation}
\label{eqn:pull-fib_i}
\xymatrix{
({\ul\cE})_I  \ar[r]^{\tilde \phi_I}  \ar[d]_{f_I}  &  ({\ul\cE^\prime})_I \ar[d]^{f^\prime_I} \\
({\ul\cB})_I \ar[r]_{\phi_I }& ({\ul\cB^\prime})_I
}
\end{equation}
for all $n > 0, \ I \in \cF(\bf n)$.

Two $X$-fibrations $f: \ul\cE \to \ul\cB,  \ f^\prime: \ul\cE^\prime \to \ul\cB^\prime$  
are said to be fiber homotopy equivalent if there exists a chain of pointed weak homotopy equivalences of $\cF$-spaces
\begin{equation}
\label{eqn:chain1}
\ul\cB \ \stackrel{\phi_1}{\leftarrow}  \ \ul\cB_1 \ \stackrel{\phi_2}{\to} \ \ul\cB_2 \  
\stackrel{\phi_3}{ \leftarrow} \quad  \cdots \quad \ul\cB_{2n-1} \ \stackrel{\phi_{2n}}{\to} \ \ul\cB 
\end{equation}
which are covered by a chain of maps of $X$-fibrations 
\begin{equation}
\label{eqn:chain2}
f \ \stackrel{\tilde\phi_1}{\leftarrow}  \ f_1 \ \stackrel{\tilde \phi_2}{\to} \ f_2 \  
\stackrel{\tilde\phi_3}{ \leftarrow} \quad  \cdots \quad f_{2n-1} \ \stackrel{\tilde \phi_{2n}}{\to} \ f^\prime
\end{equation}
with each $\tilde \phi_i$ restricting to homotopy equivalences on fibers.

If both $f: \ul\cE \to \ul \cB, \ f^\prime: \ul\cE ^\prime\to \ul \cB^\prime$ are oriented $X$-fibrations,
then we say that they are oriented fiber homotopy equivalent 
 if they are related by  a chain of oriented maps of $X$- fibrations 
as in (\ref{eqn:chain2}) covering a chain of weak homotopy equivalences as in (\ref{eqn:chain1}).
\end{defn}

 \vskip .1in

\begin{remark}
\label{rem:section}
The structure of an $X$-fibration of $\cF$-spaces $f: \ul\cE \to \ul\cB$ with section $\eta:\ul\cB \to \ul\cE$
differs from the structure of a ``sectioned map" as in \cite[Defn2.3]{F80}.
Namely, Definition \ref{defn:X-fibration}(3) is a condition given for each $I \in {\bf N}^n$ .
Functoriality of our constructions guarantees that this condition  is satisfied by our examples.
\end{remark}

\vskip .1in

The following definition of principal $d$-simplices is essentially that of \cite[Defn 4.1]{F80}.

\begin{defn}
\label{defn:principal-simplex}
Let $X$ be a pointed, connected simplicial set.
Consider  an $X$-fibration $f: \ul\cE \ \to \ \ul\cB$.
 For $I = (i_,\ldots,i_n) \in \ul\cN({\bf n})$, we say that a map $\phi: X^I \times \Delta[d]
 \to Sin(|\ul\cE_I |)$ is a principal $d$-simplex of $f$ provided that
 \begin{enumerate}
 \item
 The composition $f_I \circ \phi: X^I \times \Delta[d] \to Sin(|\ul\cE_I |) \to Sin(|\ul\cB_I |)$ factors though the
 projection, $X^I \times \Delta[d] \to \Delta[d] \to Sin(|\ul\cB_I |)$, and determines a weak homotopy equivalence
 $X^I \times \Delta[d] \ \to \ Sin(|\ul\cE_I |) \times_{Sin(|\ul\cB_I |)} \Delta[d]$.
 \item
 For every surjective map $\alpha: {\bf n} \to {\bf r}$ in $\cF$,  set $\alpha(I) = (j_1,\ldots,j_r) \in \ul\cN(r), \ j_s = \sum_{\alpha(i) = s} i_s$.
 Then $\phi$ induces $\phi^\alpha: X^{\alpha(I)} \times \Delta[d] \to Sin(|\ul\cE_{\alpha(I)} |)$ fitting in the commutative square
 \begin{equation}
\xymatrix{
X^I \times \Delta[d] \ar[d]_{\alpha} \ar[r]^-\phi &  Sin(|\ul\cE_I|) \ar[d]^{\alpha} \\
X^{\alpha(I)} \times \Delta[d] \ar[r]^-{\phi^\alpha} & Sin(|\ul\cE_{\alpha(I)})|
}
\end{equation}
such that $\phi^\alpha$ induces a weak homotopy equivalence 
$$X^{\alpha(I)} \times \Delta[d] \ \to \ Sin(|\ul\cE_{\alpha(I)} |) \times_{Sin(|\ul\cB_{\alpha(I)} )} \Delta[d].$$
 \end{enumerate}.
\end{defn}

\vskip .1in

Using the natural retraction of $Sin( || Sin ( ||-|| ))) \quad \to \quad Sin(||-||)$
of the canonical inclusion, we obtain the following natural action of $G(X^I)$
on principal simplicies.

\begin{lemma}
\label{lem:action-Sin}
Let $X$ be a pointed, connected simplicial set and $f: \ul\cE \ \to \ \ul\cB$ and $X$-fibration.
For $I = (i_,\ldots,i_n) \in \ul\cN({\bf n})$, set \ $P(f_I) \ \subset \ \ul{Hom}(X^I,Sin(|\cE_I |))$ \ to be 
the subcomplex of  principal simplices.  Then there is a right action
\begin{equation}
\label{eqn:prin-act}
P(f_I) \times G(X^I) \quad \to \quad P(f_I)
\end{equation}
sending $\phi: X^I \times \Delta[d] \to Sin(|\ul\cE_I |), \theta: X^I \times \Delta[d] \to Sin(| X^I |)$ to 
the composition
$$X^I \times \Delta[d] \stackrel{\theta,pr_s}{\to} Sin(| X^I |) \times \Delta[d] \stackrel{Sin\circ |\phi|}{\to} 
Sin(|(Sin(|\cE_I |)|))   \to Sin(|\cE_I |)).$$
For any $n > 0, \ I\in \cF({\bf n})$ and any $\alpha: {\bf n} \to {\bf m}$, the folllowing square commutes
\begin{equation}
\label{eqn:prin-square}
\xymatrix{
P(f_I) \times G(X^I) \ar[r] \ar[d]_\alpha &  P(f_I) \ar[d]^\alpha \\
P(f_{\alpha(I})) \times G(X^{\alpha(I)}) \ar[r] & P(f_{\alpha(I)}) .
}
\end{equation}
By construction, $P(f_I)$ is a principal $G(X^I)$-fibration over $\ul\cB_I$.
\end{lemma}

\begin{proof}
The commutativity of (\ref{eqn:prin-square}) is implied by the Reedy sectioning of $f$.
\end{proof}

\vskip .1in

The following construction  introduces the associated 2-sided bar construction
$B(P(f_I),G(X^I),Sin(|X^I |))$ for each $I \in \ul\cN$.

\begin{construct}
\label{construct:principal}
Let $X$ be a pointed, connected simplicial set, and consider an 
$X$-fibration  $f: \ul\cE \ \to \ \ul\cB$.
 For any $I = (i_1,\ldots,i_n) \in \ul\cN$, we define $P(f_I)$ to be the sub-complex of the 
simplicial mapping complex $\ul{Hom}(X^I,Sin(|\ul\cE_I |))$ consisting of principal simplices for $f$
(as defined in Definition \ref{defn:principal-simplex}).

The map $f_I$ induces a map $P(f_I) \to \ul\cB_I$ which admits right action by 
$G(X^I) \ \equiv \ \prod_{s=1}^n G(X^{i_s})$ (see Lemma \ref{lem:action-Sin}) which covers the identity on $\ul\cB_I$
and which satisfies the condition that for every simplex $\Delta[d] \to \ul\cB_I$
there is a $G(X^I)$-equivariant weak equivalence $(X^I \times \Delta[d]) \times G(X^I)
\to P(f_1)\times_{\ul\cB_I} \Delta[d]$.  

Moreover,  there are maps
\begin{equation}
\label{eqn:2-sided}
\xymatrix{
Sin(| \ul\cE_I |) \ar[d]^{f_I}  & \ar[l]^-{\tilde p_{f,I}} B(P(f_I),G(X^I),Sin(|X^I |)) 
 \ar[d]^{\pi_{P(f),X,I}} \ar[r]^-{\tilde q_{f,I}}  &   
 B(G(X^I),Sin(|X^I |)) \ar[d]_{\pi_{X,I}} &  \\
Sin(|\ul\cB_I |)  & \ar[l]^-{p_{f,I}} B(P(f_I),G(X^I)) \ar[r]_{q_{f,I}} &  BG(X^I)
}
\end{equation}
for any $I \in \ul\cN({\bf n})$, where the right horizontal maps are projections,  the upper left map is given by
projecting to $B(P(f_I),G(X^I))$ followed by the action of Lemma \ref{lem:action-Sin}, and the lower
left map is given by projecting to $P(f_I)$ followed by projecting to $\ul\cB_I$.
%sending a pair of $d$-simplices $\phi: \Delta[d] \times X^I \to P(f_I)$ and $z: \Delta[d] \to X^I$ to
%the $d$-simplex $\phi_{| \Delta[d]\times \{z\} }$.

The vertical maps of (\ref{eqn:2-sided}) are fibrations with fibers homotopy equivalent to $Sin|X^I |)$ 
and the left horizontal maps are homotopy equivalences because each $P(f_I) \to \ul\cB_I$ is a 
principal $G(X^I)$-fibration.
\end{construct}

\vskip .1in

The following proposition formalizes the observation that the commutative diagrams (\ref{eqn:2-sided})
 of Construction \ref{construct:principal} fit together to determine a corresponding 
 commutative diagram of $\cF$-spaces.  This may be surprising since 
 $I \mapsto P(f_I) $ is not functorial with respect to all maps $\alpha: {\bf n} \to {\bf m}$
sending $I \in \cN(({\bf n})$ to $\alpha(I) \in \cN({\bf n})$.  

\begin{prop}
\label{prop:func-P(f)}
The diagrams of (\ref{eqn:2-sided}) for all $\ul n$ and $I \in \ul\cN(\ul n)$ determine
a commutative diagram of $\cF$-spaces 
\begin{equation}
\label{eqn:2-sided-func}
\xymatrix{
\ul\cE \ar[d]_f \ar[r] &Sin(|\ul\cE|) \ar[d]^{Sin(|f|)} & \ar[l]_-{\tilde p_f} \ul\cB (P(f),G(X),X)
 \ar[d]^{\pi_{P(f),X}} \ar[r]^-{\tilde q_f} &   \ul\cB (G(X),X) \ar[d]_{\pi_X} &  \\
\ul\cB \ar[r] & Sin(|\ul\cB|)  & \ar[l]^-{p_f} \ul\cB (P(f),G(X)) \ar[r]_{q_f} &  \ul\cB G(X) \ .
}
\end{equation}

The  vertical maps of (\ref{eqn:2-sided-func}) are $X$-fibrations,  the squares 
of (\ref{eqn:2-sided-func}) constitute maps of $X$-fibrations, and the left and middle squares 
of (\ref{eqn:2-sided-func}) are fiber homotopy equivalences of $X$-fibrations.
\end{prop}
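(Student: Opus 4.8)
The plan is to assemble the $\cF$-space diagram from the degreewise data of Construction~\ref{construct:principal} by checking that the constructions $I \mapsto P(f_I)$, $I \mapsto B(P(f_I),G(X^I))$, $I \mapsto B(P(f_I),G(X^I),Sin(|X^I|))$, and $I \mapsto B(G(X^I),Sin(|X^I|))$ are each functorial in $I$ as $I$ ranges over $\ul\cN$, i.e.\ that they extend to $\cF$-spaces over $\ul\cN$ together with the four horizontal and two vertical maps displayed in (\ref{eqn:2-sided}). First I would record the functoriality of $I \mapsto G(X^I) = \prod_{s=1}^n G(X^{i_s})$ already implicit in Construction~\ref{construct:G(X)}: a morphism $\alpha:\ul n \to \ul m$ in $\cF$ sending $I$ to $J$ induces the smash-product pairings $G(X^{i_s}) \to G(X^{j_t})$ (over $\alpha(s)=t$), and likewise $Sin(|X^I|) \to Sin(|X^J|)$ via the natural factorization $Sin(|X^{i_1}|)\times Sin(|X^{i_2}|) \to Sin(|X^{i_1+i_2}|)$ used in Construction~\ref{construct:X-action}. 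The bar constructions $B(-,-)$ and $B(-,-,-)$ are functorial in all slots, so once the $\cF$-structure on the $P(f_I)$, $G(X^I)$, $Sin(|X^I|)$ pieces is in place, applying the bar construction degreewise and taking the diagonal of the resulting bisimplicial sets produces the four $\cF$-spaces and their structure maps automatically.

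The step that genuinely requires the hypothesis that $f:\ul\cE \to \ul\cB$ is an $X$-fibration (Definition~\ref{defn:X-fibration}) is showing that $I \mapsto P(f_I)$ is functorial in $I$ and that the resulting $\cF$-space is special over $\ul\cB$. Here I would use condition (2) of Definition~\ref{defn:X-fibration}: the homotopy equivalence $\rho_I:\ul\cE_I \xrightarrow{\sim} \prod_{s=1}^n \ul\cE_{i_s}\times_{\prod\ul\cB_{i_s}}\ul\cB_I$ identifies a principal $d$-simplex $\phi:X^I\times\Delta[d]\to\ul\cE_I$ with a tuple $(\phi_s)$ of principal $d$-simplices of the $f_{i_s}$ (that is exactly the content of Definition~\ref{defn:principal-simplex}(2)), so $P(f_I) \simeq \prod_{s=1}^n P(f_{i_s})$ compatibly with the $G(X^I) = \prod G(X^{i_s})$-action; this both gives specialness and, via condition (3) (composing $\rho_I$ with the fiberwise join recovers $\ul\cE(\mu)$), pins down how a general $\alpha\in\cF$ acts on the $P(f_I)$ by the smash-product pairing on principal simplices. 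I would also invoke the last paragraph of Construction~\ref{construct:principal}: each $\rho_I$ being a principal $G(X^I)$-fibration is what makes $\tilde p_{f,I}$ and $p_{f,I}$ homotopy equivalences, and this property is preserved under the $\cF$-structure maps since those are built from the pairings above.

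Having the degreewise diagram (\ref{eqn:2-sided}) be natural in $I\in\ul\cN(\ul n)$ and compatible with morphisms of $\cF$, I would conclude that it assembles into the commutative diagram (\ref{eqn:2-sided-func}) of $\cF$-spaces over $\ul\cN$. That the vertical maps $\pi_{P(f),X}$ and $\pi_X$ are $X$-fibrations follows by checking Definition~\ref{defn:X-fibration} slot by slot: condition (1) is the statement that the $B(-,-,Sin(|X^I|))\to B(-,-)$ maps have fiber $Sin(|X^I|) \simeq Sin(|X|^{\times|I|})$ noted at the end of Construction~\ref{construct:principal}; conditions (2) and (3) are the specialness and join-compatibility transported through the product decompositions $P(f_I)\simeq\prod P(f_{i_s})$, $G(X^I)=\prod G(X^{i_s})$, which commute with the bar construction up to the homotopy equivalences supplied by Corollary~\ref{cor:product}-type Künneth arguments for bar constructions (cf.\ the smash-product factorizations in Construction~\ref{construct:X-action}). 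Finally, $p_f$, $\tilde p_f$ are homotopy equivalences of $\cF$-spaces by Proposition~\ref{prop:obvious}, since $(p_f)_I = p_{f,I}$ and $(\tilde p_f)_I = \tilde p_{f,I}$ are homotopy equivalences for every $I$.

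The main obstacle I anticipate is the bookkeeping of \emph{functoriality in $\cF$} for $I\mapsto P(f_I)$: principal simplices are defined by a weak-equivalence condition relative to $\ul\cE_I$, and one must verify that the smash-product pairing on $\ul\cE$ (coming from condition (3) of Definition~\ref{defn:X-fibration}) actually carries a tuple of principal simplices to a principal simplex — i.e.\ that the join/smash of weak equivalences $X^{i_s}\times\Delta[d]\to\ul\cE_{i_s}$ is again a weak equivalence onto the appropriate pullback $\ul\cE_J\times_{\ul\cB_J}\Delta[d]$. This is where conditions (2) and (3) must be used in tandem, and where the subtlety that smash products of Kan complexes need not be Kan (flagged in \S\ref{sec:aspects} and handled by the $Sin(|{-}|)$ device) actually earns its keep; everything else is formal manipulation of bar constructions and diagonals of bisimplicial sets.
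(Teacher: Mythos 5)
Your proposal is correct and follows essentially the same route as the paper's proof: there, too, the degreewise diagrams of (\ref{eqn:2-sided}) are assembled into $\cF$-spaces by imitating Constructions \ref{construct:G(X)} and \ref{construct:X-action} (each $I$-component being the relevant bar construction times the coherence factors $\prod_{T} B(Iso(X^{i_T}),Iso(X^{i_T}))$, which is precisely the device that makes the smash-product functoriality you worry about strict rather than merely up-to-homotopy), the verification that $\pi_{P(f),X}$ is an $X$-fibration is deferred to \cite[Ex5.5]{F80}, and the horizontal maps are shown to be equivalences degreewise exactly as you argue via Proposition \ref{prop:obvious}. The only difference is presentational: you derive the $\cF$-functoriality of $I\mapsto P(f_I)$ directly from conditions (2) and (3) of Definition \ref{defn:X-fibration}, whereas the paper encodes it in the explicit $Iso$-factors and declares the commutativity check straightforward.
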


\begin{proof}
After replacing the left $G(X^I) \ \equiv \prod_{s=1}^n G(X^{i_s})$ action on $Sin(|X^I)|)$ by
the right $G(X^I)$-action on $P(f)_I$, we repeat the discussion of
Construction \ref{construct:X-action} using the maps of (\ref{eqn:2-sided})
to obtain a map of special $\cF$-spaces
 \begin{equation}
 \label{eqn:principal}
 q_f: \ul\cB(P(f),G(X)) \quad \to \quad  \ul\cB G(X)
 \end{equation}
with
$$
\ul\cB(P(f),G(X))_I \ = \ B(P(f_I),G(X^I))\times 
\prod_{T \subset {\bf n} } B(Iso(X^{2i_T}),Iso(X^{2i_T}))$$
mapping to \ $\ul \cB G(X)$ \ 
by sending $P(f_I)$ to the base point .

Using the two sided bar construction, we define $\ul\cB(P(f),G(X),X)_I$ to be
\begin{equation}
\label{eqn:2-sided-P}
 \ul\cB(P(f),G(X),X)_I \ = \ B(P(f_I),G(X^I),Sin(|X^I |)) \times 
\prod_{T \subset {\bf n} } B(Iso(X^{2i_T}),Iso(X^{2i_T}))
\end{equation}
for any $I \in \ul\cN({\bf n})$.  Then sending each $P(f_I)$ to the base point
determines 
$$\tilde q_f: \ul\cB(P(f),G(X),X) \quad \to \quad \ul\cB (G(X),X)$$
 covering $q_f$.

As for $q_f$, the maps $p_{f,I}$ (respectively, $\tilde p_{f,i}$) of (\ref{eqn:2-sided}) 
determine the maps
$p_f: \ul\cB (P(f),G(X)) \to Sin(|\ul\cB|)$ (resp., $\tilde p_f: \ul\cB (P(f),G(X),X) \to Sin(|\ul\cE|)$).

The proof that $\pi_{P(f),X}: \ul\cB(P(f),G(X),X) \to  \ul\cB(P(f),G(X))$ is an $X$-fibration,
(and in particular is a map of $\cF$-spaces), is basically the justification given for \cite[Ex5.5]{F80}.
The fact that $\tilde p_f$ and $p_f$ are homotopy equivalences of $\cF$-spaces 
follows from the assertion of Construction \ref{construct:principal} that each
$\tilde p_{f,I}$ and each $p_{f,I}$ is a homotopy equivalence.
%For any $I \in \ul\cN({\bf n}$), consider the natural map \ 
%$\rho(f)_I: \ul\cB(P(f),G(X))_I \ \to \ \ul\cB_I$ \ 
%given as the composition of the projection to $P(f)_I$  followed
%by $p(f_I): P(f)_I \to \ul\cB_I$.  As constructed in Construction \ref{construct:principal}, 
%each fiber (above a $d$-simplex $\Delta[d] \to \ul\cB_I$) of $P(f)_I$ is 
%homotopy equivalent to $G(X^I)$, so that the fibers of $\rho(f))_I$ are contractible.
%Since each $\rho_I: P(f_I) \to \ul\cB_I$ is a principal $G(X^I)$-fibration, 
%the homotopy equivalences $p_{f,I}$ constituting the lower left horizontal 
%maps of (\ref{eqn:2-sided})  determine a homotopy equivalence of 
%$\cF$-spaces. 
%\begin{equation}
%\label{eqn:rho-f}
%p_f:  \ul\cB(P(f),G(X))  \quad \to \quad \ul \cB.
%\end{equation}
%\end{proof}
%
%
%To define $\tilde p_f$, we define for $I \in \ul\cN({\bf n})$
%the natural map 
%\begin{equation}
% \label{eqn:evaluate}
% \ul\cB(P(f),G(X),X)_I  \ \to \ Sin(|\ul\cE_I |)
% \end{equation}
% by projecting to $B(P(f_I),G(X^I),Sin(|X^I |))$
% and then further projecting to $P(f_I) \times Sin(|X^I)$
% and then sending a simplex of the latter (given by an appropriate
% fiber simplex $\phi$ of $f:\ul\cE \to \ul\cB$ and a simplex  $\chi$ of $Sin(|X^I |)$
%to the simplex of $Sin(|\ul\cE_I |)$) given by the evaluation of $Sin(| f |)$ on $\chi$,
%thereby giving a simplex of $Sin(|\ul\cE_I |)$.
%
The verification of the commutativity of the squares of (\ref{eqn:2-sided-func})
is straight-forward.
%, whereas the right square commutes by functoriality of $Sin(|-|)$.
\end{proof}

\vskip .1in

The proof of the following theorem given in \cite{F80} proceeds along the lines of 
arguments of J.P May in  \cite{May75} for classifying maps for fibrations with given homotopy type
as fiber.  

\begin{thm} (\cite[Thm6.1]{F80}
\label{thm:X-universal}
Let $X$ be a pointed, connected simplicial set.
Then the  $X$-fibration of $\cF$-spaces of (\ref{eqn:pi-univ}),
\ $\pi_{X}: {\ul\cB}(G(X),X) \ \to \ {\ul\cB}G(X)$, \
satisfies the following universal property.
\vskip .05in
For any special $\cF$-space ${\ul \cB}$ over $\ul \cN$ and any $X$-fibration 
$f: {\ul \cE} \ \to {\ul \cB}$ over ${\ul \cB}$,
there is a unique homotopy class of maps of $\cF$-spaces $\phi: {\ul \cB} \ \to \ {\ul\cB}G(X)$
over $\ul \cN$ such that $\phi^*(\pi_{X})$ is fiber homotopy equivalent to $f$ as $X$-fibrations 
over ${\ul\cB}$.
\end{thm}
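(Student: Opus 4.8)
\textbf{Proof plan for Theorem \ref{thm:X-universal}.}

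The plan is to establish the universal property by using Construction \ref{construct:principal} and Proposition \ref{prop:func-P(f)} to turn an arbitrary $X$-fibration $f: \ul\cE \to \ul\cB$ into a classifying map, and then to verify that this assignment is inverse (up to fiber homotopy equivalence and homotopy of maps) to pull-back of $\pi_X$. First I would use Proposition \ref{prop:func-P(f)}: given $f$, the diagram (\ref{eqn:2-sided-func}) produces a chain of $\cF$-spaces $\ul\cB \stackrel{p_f}{\leftarrow} \ul\cB(P(f),G(X)) \stackrel{q_f}{\to} \ul\cB G(X)$ in which $p_f$ is a homotopy equivalence of $\cF$-spaces. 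Inverting $p_f$ in the homotopy category of $\cF$-spaces (using the Quillen model structure of \cite{Bo-F}, after cofibrant/fibrant replacement as needed) yields a well-defined homotopy class of maps $\phi_f := q_f \circ p_f^{-1}: \ul\cB \to \ul\cB G(X)$. The commutative squares of (\ref{eqn:2-sided-func}), whose horizontal maps $\tilde p_f$ and $p_f$ are homotopy equivalences, exhibit a chain of maps of $X$-fibrations connecting $f$ to $\phi_f^*(\pi_X)$, each respecting the sections; reading off that (\ref{eqn:pull-fib_i}) is homotopy cartesian on each $\ul\cB_I$ reduces, via Definition \ref{defn:X-fibration}(2), to the case $n=1$ and a single $i>0$, where it follows because $\pi_{P(f),X,I}$ and $\pi_{X,I}$ are fibrations with fiber $Sin(|X^{i}|)$ and $\tilde q_{f,I}$ covers $q_{f,I}$. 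This gives existence.

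For uniqueness, I would argue that pull-back and the assignment $f \mapsto \phi_f$ are mutually inverse on homotopy/fiber-homotopy classes. Starting from a map $\psi: \ul\cB \to \ul\cB G(X)$ and setting $f = \psi^*(\pi_X)$, one must check $\phi_f \simeq \psi$. Here the point is that the principal $G(X^I)$-fibration $\rho_I: P(f_I) \to \ul\cB_I$ associated to a pulled-back fibration is itself the pull-back along $\psi_I$ of the universal principal $G(X^I)$-fibration $B(G(X^I)) \leftarrow EG(X^I)$ (equivalently, $P(f_I)$ is $\psi_I$-equivariantly equivalent to $\psi_I^* EG(X^I)$), so that $q_f \circ p_f^{-1}$ recovers the classifying map $\psi$ up to homotopy. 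This is exactly the classical statement — for fibrations with fixed fiber homotopy type — that the two-sided bar construction recovers the classifying map, and the cited argument of May \cite{May75} applies level-wise in $I \in \ul\cN$; the $\cF$-space bookkeeping is then handled by Proposition \ref{prop:obvious}, which says a map of $\cF$-spaces over $\ul\cN$ is a homotopy equivalence as soon as it is one on each $\ul\cB_I$. Conversely, starting from $f$ and forming $\phi_f^*(\pi_X)$, the chain of (\ref{eqn:2-sided-func}) already supplies the required fiber homotopy equivalence, so the two constructions are inverse.

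The main obstacle I expect is not the homotopy theory of a single fibration — that is standard after \cite{May75} — but rather the compatibility of all these constructions with the $\cF$-space structure over $\ul\cN$, i.e.\ checking that $P(f)$, the bar constructions, and the maps $p_f, q_f, \tilde p_f, \tilde q_f$ assemble into honest maps of $\cF$-spaces respecting the smash-product pairings encoded by the morphisms $\alpha: \ul n \to \ul m$ in $\cF$, and that the fiber homotopy equivalences can be chosen compatibly across all $I$. Conditions (2) and (3) of Definition \ref{defn:X-fibration} are precisely what is needed to reduce the verification of each such compatibility to the single-variable case and to the fiberwise-join description of $\ul\cE(\mu)$; I would spell out this reduction carefully and then invoke Proposition \ref{prop:obvious} and Proposition \ref{prop:func-P(f)} to finish. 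A secondary technical nuisance is the usual model-category hygiene — ensuring $\ul\cB$ is cofibrant and $f$ fibrant, taking fibrant replacements of the bar constructions so that pull-backs are homotopy pull-backs — but these are routine given the framework of \cite{Bo-F} and are already flagged in the remarks preceding Definition \ref{defn:X-fibration}.
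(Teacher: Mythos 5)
Your plan follows essentially the same route as the paper's proof: existence via the chain $\ul\cB \leftarrow \ul\cB(P(f),G(X)) \to \ul\cB G(X)$ of Proposition \ref{prop:func-P(f)}, and uniqueness by showing the assignment $f \mapsto q_f\circ p_f^{-1}$ is a well-defined inverse to pull-back of $\pi_X$, with the round-trip identification handled by May's classifying-space argument applied levelwise in $I\in\ul\cN$. The only difference is one of emphasis — the paper spells out well-definedness on fiber homotopy classes via the diagram (\ref{eqn:model-inverse}) and defers the round-trip check to \cite[Thm 6.1]{F80}, whereas you sketch the round-trip (via pull-back of the universal principal $G(X^I)$-fibration) in more detail — so the proposal is correct and matches the paper's argument.
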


\begin{proof}
Proposition \ref{prop:func-P(f)} implies that
the $X$-fibration $f: \ul\cE \ \to \ \ul\cB$ 
is fiber homotopy equivalent to the pull-back of the universal $X$-fibration 
$\pi_{X}: {\ul\cB}(G(X),X) \ \to \ {\ul\cB}G(X)$  via the
homotopy class of the composition of a homotopy inverse for $p_f$ and the map $q_f$:
$$\ul\cB \quad \to \quad Sinc(| \ul\cB |) \quad \stackrel{p_f}{\leftarrow} \quad \ul\cB(P(f),G(X)) \quad 
\stackrel{q_f}{\to} \quad \ul \cB G(X).$$
In other words,  every $X$-fibration $f: \ul \cE \ \to \ \ul B$ is fiber homotopy equivalent to
$\phi^*(\pi_X)$ for  $\phi \ = \ q_f \circ \rho_f^{-1}: \ul\cB \ \to \ \ul \cB G(X)$.

We define an inverse to this surjective correspondence by associating 
$q_f \circ \rho_f^{-1}: \ul\cB \ \to \ul \cB G(X)$
to an $X$-fibration $f: \ul\cE \to \ul\cB$.   In order to show this is well defined on 
fiber equivalence classes of $X$-fibrations it suffices to consider 
two $X$-fibrations $f: \ul\cE \to \ul\cB, \ f^\prime: \ul\cE^\prime \to \ul\cB$ together with a
homotopy equivalence between them as in Definition \ref{defn:X-map}
\begin{equation}
\label{eqn:phi-map}
\xymatrix{
\ul\cE \ar[d]_f \ar[r]^{\tilde \phi} & \ \ul\cE^\prime \ar[d]_{f^\prime} \\
\ul\cB \ar[r]_\phi & \ul\cB^\prime \ .
}
\end{equation}
The map (\ref{eqn:phi-map} determines  a commutative diagram of $\cF$-spaces
\begin{equation}
\label{eqn:model-inverse}
\xymatrix{
& \ul\cB (P(f),G(X)) \ar[ld]_{p_f} \ar[dd]^\approx \ar[rd]^{q_f} & & \\
Sin(| \ul\cB|) & & \ul\cB G(X) \ ,\\
& \ul\cB (P(f^\prime),G(X)) \ar[lu]^{p_{f^\prime}} \ar[ru]_{q_{f^\prime}} & 
}
\end{equation}
telling us that the homotopy class of the map $q_f \circ p_f^{-1}: \ul\cB \ \to \ul \cB G(X)$
depends only the fiber homotopy equivalence class of $f$.

Finally, we verify that the composition of this inverse with our first construction is the 
identity.  This verification is given by the last part of the proof of \cite[Thm 6.1]{F80} with
only notational changes.
\end{proof}

\vskip .1in

Following Remark \ref{rem:orient}, we see that the proof of Theorem \ref{thm:X-universal}
applies to prove the following result.

\begin{cor}
\label{cor:X-universal-or}
Let $X$ be a pointed, connected simplicial set satisfying the condition that permuting factors
of the smash product is weakly homotopic to the identity for each $n > 0$.
Then the  $X$-fibration of $\cF$-spaces,
\ $\pi_{X}: {\ul\cB}(G^o(X),X) \quad \to \quad \ {\ul\cB}G^o(X)$ \ of (\ref{eqn:pi-univ})
satisfies the following universal property.
\vskip .05in
For any special $\cF$-space ${\ul \cB}$ over $\ul \cN$ and any oriented $X$-fibration 
$f: {\ul \cE} \ \to {\ul \cB}$ over ${\ul \cB}$,
there is a unique homotopy class of maps of  $\cF$-spaces $\phi: {\ul \cB} \ \to \ {\ul\cB}G^o(X)$
over $\ul \cN$ such that $\phi^*(\pi_{X,o})$ is oriented fiber homotopy equivalent to $f$ as $X$-fibrations 
over ${\ul\cB}$.
\end{cor}

\vskip .2in

%%%%%%%%%%%%%%%%%%%
%%%%%%%%%%%%%%%%%%%%

\section{$\bZ/\ell$-completed $X$-fibrations}
\label{sec:ell-complete}

Our goal in this section is to provide $\bZ/\ell$-completion analogues of 
the constructions and results of Section \ref{sec:X-fib}.  We use
the $\bZ/\ell$-completion functor  $(-)_\ell \ \equiv \ (\bZ/\ell)_\infty(-)$ of 
Definition \ref{defn:ell-complete}, regularly appealing to the remarkable 
book \cite{Bo-Kan} by Bousfield and Kan.  In Definition \ref{defn:ell-fibration}, we define a
 $\bZ/\ell$-completed $X$-fibration to be a  map of $\cF$-spaces $f:\ul\cE \ \to \ \ul \cB$ over
$\ul\cN$ with the property that the fibers of $f_I: \ul\cE_I \ \to \ul \cB_I$ are homotopy equivalent
to $(\prod_{j=1}^n X^{i_j})_\ell \ \stackrel{\approx}{\to} \ \prod_{j=1}^n (X^{i_j})_\ell$ \  
 for $I = (i_1,\ldots,i_n) \in \ul\cN(\bf n)$.
The ``classifying $\cF$-space" for such fibrations involves monoids $G_\ell(X^m)$
of mod-$\ell$ equivalences $X^m \to (X^m)_\ell$.

As in Definition \ref{defn:G(X)} where we considered maps from $X$ to $Sin(| X |)$, 
it is important that we consider maps whose source is often a smash product (and thus not a Kan complex) 
and whose target is $\ell$-complete.

\vskip .1in

\begin{defn}
\label{defn:smash-ell-good}
We recall that a pointed, connected simplicial set $X$ is called $\ell$-good if the 
canonical $\bZ/\ell$-completion map $X \to X_\ell$ is a mod-$\ell$-equivalence
(i.e., induces an isomorphism in mod-$\ell$ homology).  
We say that a pointed simplicial set $X$ is ``smash $\ell$-good" if the $m$-fold smash 
product $X^m$ of $X$ is $\ell$-good for all $m > 0$.

Using the natural commutative square 
\begin{equation}
\xymatrix{
X^m \ar[d] \ar[r] &  (X^m)_\ell \ar[d] \\
(X_\ell)^m \ar[r] & ((X_\ell)^m)_\ell
}
\end{equation}
and the fact that the $(\bZ/\ell)$-completion of
$X_\ell$ of an $\ell$-good simplicial set $X$ is also $\ell$-good, we conclude that
if $X$ is smash $\ell$-good then so is $X_\ell$.
\end{defn}

\vskip .in

\begin{defn}
\label{defn:G-ell}
Let $X$ be a pointed, connected simplicial set.  Assume that $X$  is smash $\ell$-good.
For any $m > 0$, we  denote by
\begin{equation}
\label{eqn:action1}
G_\ell(X^m) \ \quad \hookrightarrow \quad \ul{Hom}(X^m,(X^m)_\ell)
\end{equation}
the function complex consisting of pointed $t$-simplices $X^m \times \Delta[t] \ \to (X^m)_\ell$
which are mod-$\ell$ equivalences.

We denote by 
\begin{equation}
\label{eqn:G-ell}
\iota: G_\ell^o(X^m) \quad \hookrightarrow \quad G_\ell(X^m) 
\end{equation}
the connected component of the distinguished map $\phi: X^m \ \to \ (X^m)_\ell$.
\end{defn}

\vskip .1in

\begin{prop}
\label{prop:G-X-ell}
The function complexes $G_\ell(X^m)$ and $G_\ell^o(X^m)$ of Definition \ref{defn:G-ell}
satisfy the following properties.
\begin{enumerate}
\item 
$G_\ell(X^m)$ is a simplicial monoid and $G_\ell^o(X^m) \ \subset \ G_\ell(X^m)$ 
is a simplicial sub-monoid, where the product structure is induced by composition of functions
and the functorial retraction $\eta: (\bZ/\ell)_\infty(-) \circ (\bZ/\ell)_\infty(-) \to (\bZ/\ell)_\infty(-)$ \cite[Lem I.5.6]{Bo-Kan}.
\item 
There is a natural action $G_\ell(X^m) \times (X^m)_\ell \ \to \  (X^m)_\ell$ determining an inclusion of simplicial 
monoids
$$G_\ell(X^m) \quad \hookrightarrow \quad \ul{Hom}((X^m)_\ell,(X^m)_\ell).$$
\item 
The action of (2) fits in 
commutative squares involving compositions and smash products
\begin{equation}
\label{eqn:ell-ell}
\xymatrix{
G_\ell(X^m) \times (X^m)_\ell \times G_\ell(X^n) \times  (X^n)_\ell \ar[d] \ar[r] &  
G_\ell(X^{m+n}) \times (X^{m+n})_\ell  \ar[d] \\
(X^m)_\ell \times  (X^v)_\ell \ar[r] & (X^{m+n})_\ell \ .
}
\end{equation}
\end{enumerate}
\end{prop}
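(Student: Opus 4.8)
The plan is to verify the three assertions in order, with most of the work amounting to unwinding the definitions of the function complexes $G_\ell(X^m)$ and $G_\ell^o(X^m)$ and checking that composition and smash product behave as expected. For assertion (1), I would first recall that $\ul{Hom}(X^m, X^m_\ell)$ carries a simplicial monoid structure because $X^m_\ell$ is a Kan complex equipped with the canonical map $\phi \colon X^m \to X^m_\ell$ and, via the adjunction-type structure arising from $(\bZ/\ell)_\infty$, a pairing $X^m_\ell \times X^m_\ell \to X^m_\ell$ (more precisely, one uses that a $t$-simplex $g \colon X^m \times \Delta[t] \to X^m_\ell$ extends over the $\bZ/\ell$-completion because $X^m_\ell$ is already $\bZ/\ell$-complete, so one composes $X^m \xrightarrow{\phi} X^m_\ell \xrightarrow{\tilde g} X^m_\ell$). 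One must then check that the subset of $t$-simplices which are $\ell$-equivalences is closed under this composition: this is immediate since a composite of maps inducing isomorphisms on $H^*(-,\bZ/\ell)$ again induces such an isomorphism, and the identity $\phi$ is an $\ell$-equivalence by the smash $\ell$-good hypothesis on $X$. The face and degeneracy maps of $\ul{Hom}$ clearly preserve the $\ell$-equivalence condition since they are induced by maps of the $\Delta[t]$ factor. Finally, $G_\ell^o(X^m)$ is a sub-monoid because it is the connected component of the identity element $\phi$ and multiplication is continuous (simplicial), hence sends the component of $\phi$ times the component of $\phi$ into the component of $\phi \cdot \phi = \phi$.

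For assertion (2), the point is to produce from a $t$-simplex $g \colon X^m \times \Delta[t] \to X^m_\ell$ that is an $\ell$-equivalence a corresponding $t$-simplex $\hat g \colon X^m_\ell \times \Delta[t] \to X^m_\ell$. Here I would invoke the universal property of $\bZ/\ell$-completion from Definition \ref{defn:ell-complete}: since $X^m_\ell$ is $\bZ/\ell$-complete (being a value of $(\bZ/\ell)_\infty$) and $\phi \times \id \colon X^m \times \Delta[t] \to X^m_\ell \times \Delta[t]$ is a mod-$\ell$ equivalence (using smash $\ell$-goodness of $X$ together with the product property $(\bZ/\ell)_\infty(Y \times Z) \simeq (\bZ/\ell)_\infty(Y) \times (\bZ/\ell)_\infty(Z)$ and that $\Delta[t]$ is contractible), the map $g$ factors, up to the relevant coherence, through $\phi \times \id$. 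One then checks that $g \mapsto \hat g$ is compatible with composition and with faces/degeneracies, giving the claimed inclusion of simplicial monoids $G_\ell(X^m) \hookrightarrow \ul{Hom}(X^m_\ell, X^m_\ell)$; injectivity follows because $\hat g \circ (\phi \times \id) \simeq g$ recovers $g$ on the nose at the level of components, or more carefully because $\phi$ induces a surjection on the relevant simplicial sets of maps into a complete target.

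For assertion (3), the action $G_\ell(X^m) \times X^m_\ell \to X^m_\ell$ is the evaluation pairing coming from (2), and I would build the commutative square (\ref{eqn:ell-ell}) as follows. The top horizontal map sends $(g, a, h, b)$ to the pair consisting of the smash product $g \wedge h$ (using that the smash product of $\ell$-equivalences, suitably completed, is again an $\ell$-equivalence — here one uses the mod-$\ell$ Künneth theorem and the fact that, after $\bZ/\ell$-completion, $(X^m)_\ell \wedge (X^n)_\ell \to (X^{m+n})_\ell$ is a mod-$\ell$ equivalence as in (\ref{eqn:wedge})) together with the smash $a \wedge b \in X^{m+n}_\ell$; the bottom horizontal map is the smash product $X^m_\ell \times X^n_\ell \to X^{m+n}_\ell$ factoring through $X^m_\ell \wedge X^n_\ell$. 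Commutativity then reduces to the elementary identity that evaluating $g \wedge h$ on $a \wedge b$ equals the smash of $g$ evaluated on $a$ with $h$ evaluated on $b$, which holds by construction of the smash product of maps. The main obstacle, and the step requiring the most care, is assertion (3): one must be precise about the non-Kan nature of $X^m$ and the fact that smash products of $\bZ/\ell$-completions are not themselves $\bZ/\ell$-complete, so the pairing $G_\ell(X^m) \times G_\ell(X^n) \to G_\ell(X^{m+n})$ only exists after composing with the canonical factorization through $(X^{m+n})_\ell$ — this is exactly the kind of subtlety the introduction flagged, and making the square (\ref{eqn:ell-ell}) literally commute (rather than merely commute up to homotopy) requires pinning down these factorizations functorially in $m$ and $n$, which one does using the naturality of $\phi$ and of the comparison maps in (\ref{eqn:wedge}).
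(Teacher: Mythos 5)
Your overall route coincides with the paper's: define composition by extending a simplex $g\colon X^m\times\Delta[t]\to X^m_\ell$ to a self-map of $X^m_\ell$, obtain (2) from that same extension, and obtain (3) from the canonical $\bZ/\ell$-linearity retraction $(X^m)_\ell\times(X^n)_\ell\to(X^m\times X^n)_\ell$ of \cite[I.7.2]{Bo-Kan}, exactly as set up in (\ref{eqn:wedge}). Your observations that composites of mod-$\ell$ equivalences are mod-$\ell$ equivalences and that the identity component is a submonoid are also how the paper handles those points.

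The one genuine gap is the mechanism you use to produce the extension $\tilde g$. You invoke ``the universal property of $\bZ/\ell$-completion'' and accept a factorization ``up to the relevant coherence.'' Bousfield--Kan completion has no strict universal property of this kind, and a factorization specified only up to coherence cannot deliver what the proposition asserts: a strictly associative simplicial monoid in (1), a strict homomorphism of simplicial monoids in (2), or a strictly commutative square in (3). The paper's proof supplies the missing ingredient, namely the triple (monad) multiplication $\eta\colon(\bZ/\ell)_\infty\circ(\bZ/\ell)_\infty\to(\bZ/\ell)_\infty$: one defines $\tilde g$ explicitly as the composite of the canonical completion map on $\Delta[d]\times X^m_\ell$, followed by the completion of $g$, followed by $\eta\colon(X^m_\ell)_\ell\to X^m_\ell$. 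This is an actual simplicial map, strictly compatible with faces, degeneracies and composition; the triple identities for $\eta$ give associativity and the equality (not merely a homotopy) $\tilde g|_{\Delta[d]\times X^m}=g$, which is what yields injectivity in (2). Your treatment of (3) is then fine once the pairing $G_\ell(X^m)\times G_\ell(X^n)\to G_\ell(X^{m+n})$ is written with the same explicit factorization through $(X^m\times X^n)_\ell$, so the repair is local: replace the appeal to a universal property by the explicit use of $\eta$.
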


\begin{proof}
The ``composition product" $G_\ell(X^m) \times G_\ell(X^m) \ \to \ G_\ell(X^m)$
is defined by sending $\alpha, \beta: \Delta[d] \times X^m \to (X^m)_\ell$ to the 
composition
\begin{equation}
\Delta[d] \times X^m \ \stackrel{pr_1\times \alpha}{\to} \ \Delta[d]\times (X^m)_\ell \to 
(\Delta[d]\times X^m)_\ell \ \stackrel{\beta_\ell}{\to} \ ((X^m)_\ell)_\ell \ \stackrel{\eta}{\to} \ (X^m)_\ell \ ,
\end{equation}
where $\eta: (\bZ/\ell)_\infty \circ (\bZ/\ell)_\infty \to (\bZ/\ell)_\infty$ is part of the natural 
triple structure (see, for example, \cite{MacL}). 
The action $G_\ell(X^m) \times (X^m)_\ell \ \to \  (X^m)_\ell$ is similarly defined, sending 
 
$\alpha: \Delta[d] \times X^m \to (X^m)_\ell$ to 
$$\Delta^d \times (X^m)_\ell \ \stackrel{(-)_\ell}{\to} \ (\Delta^d \times (X^m)_\ell)_\ell \ 
\stackrel{\alpha_\ell}{\to} \ ((X^m)_\ell)_\ell \ \stackrel{\eta}{\to} \ (X^m)_\ell \ .$$

If we denote this composition by $\tilde \alpha$, then we readily verify that the 
restriction of $\tilde \alpha$ to $\Delta[d] \times X^m$ equals $\alpha$.
The map $\ul{Hom}(X^m,X^m_\ell) \ \to \ \ul{Hom}((X^m)_\ell,(X^m)_\ell)$
commutes with composition thanks to the ``idempotence" of $\eta$.

The smash product 
\begin{equation}
\label{eqn:ell-prod}
G_\ell(X^m) \times G_\ell(X^n) \quad \to \quad G_\ell(X^{m+n})
\end{equation}
extending the smash product pairing  $G(X^m)\times G(X^n)\ \to \ G(X^{m+n})$
is given by sending the pair of $t$-simplices $\Delta[t] \times X^m \to  (X^m)_\ell$, \
$\Delta[t] \times X^n \to  (X^n)_\ell$ to the $t$-simplex 
$\Delta[t] \times X^{m+n} \to  (X^{m+n})_\ell$ defined as the 
factorization of the composition 
\begin{equation}
\label{eqn:comp-ell}
\Delta[t] \times X^m \times X^n \ \to  \ (X^m)_\ell \times (X^n)_\ell \
\to \ (X^m \times X^n)_\ell \  \to  \ (X^{m+n})_\ell \ ,
\end{equation}
where the second map is the canonical homotopy equivalence of \cite[I.7.2]{Bo-Kan} given
by $\bZ/\ell$-linearlity (and which is a retraction of the natural product map
$(X^m \times X^n)_\ell  \to (X^m)_\ell \times (X^n)_\ell$).  We refer to the composition
of the third and fourth maps of (\ref{eqn:comp-ell}),
\begin{equation}
\label{eqn:smash-ell}
(X^m)_\ell \times (X^n)_\ell \quad \to \quad (X^m \times X^n)_\ell \quad \to \quad (X^{m+n})_\ell \ ,
\end{equation}
as the smash product of $(X^m)_\ell$ and  $(X^n)_\ell$.

The commutativity of (\ref{eqn:ell-ell}) is left to the reader.
\end{proof}

The following definition of a $\bZ/\ell$-completed $X$-fibration 
enables considerations
of actions by $G_\ell(X^m)$ on $(X^m)_\ell$ similar to the 
actions of $G(X^m)$ on $Sin(|X^m|)$ considered in Definition \ref{defn:X-fibration}.
This definition does not mention orientations and uses simpler notation,
but is closely related to that of \cite[Defn 7.1]{F80}.

\vskip .1in

\begin{defn}
\label{defn:ell-fibration}
Let $X$ be a pointed, connected simplicial set which is smash $\ell$-good.

A $\bZ/\ell$-completed $X$-fibration over a special $\cF$-space ${\ul \cB}: \cF \to (s.sets_*)$
is a  map \ $f: \ul\cE \ \to \ \ul\cB$ \ of special $\cF$-spaces 
over $\ul \cN$ equipped with a Reedy sectioning 
satisfying the following conditions:
\begin{enumerate}
\item
For each $n > 0$ and $I \in \cF({\bf n})$, $f_I: \ul\cE_I\to \ul\cB_I $ is a fibration with fibers which
are pointed homotopy equivalent to $(X^I)_\ell$.
\item
For any $\ul n \in \cF$ and $I = (i_1,\ldots,i_s, \ldots i_n) \in \ul\cN(\bf n)$, the projection maps
maps $p_i: \ul n \to \ul 1$ (sending $s \not= i$ to 0 and sending $i$ to 1) induce a fiber homotopy
equivalence $\rho_I: \ul \cE_I \to \prod_{s=1}^n {\ul\cE}_{i_s} \times_{ \prod {\ul\cB}_{i_s} } {\ul\cB}_I$
compatible with the section $\eta$.
\item
For any $\alpha: \bf n \to \bf m$ in $\cF$,  $I = (i_1,\ldots,i_n)\in \ul\cN(\bf n)$
and any $0 \not= t \in \bf m$,
let $I^{-1}(t) \subset \ul\cN(\bf r)$ denote the ordered subset of $I$  consisting of those entries $i_s$ of $I$
such that $\alpha(s) = t$.   Denote by $\beta: \bf n \to \bf r$ the surjective map
sending $i_s$ to 0 if $i_s \notin I^{-1}(t)$ and otherwise preserving the ordering.  Denote by
$\mu: \bf{r} \to \bf 1$ the map sending each $0 \not= i \in  \bf r$ to $1 \in \bf 1$.
Consider the commutative square
\begin{equation}
\begin{xy}*!C\xybox{%
\xymatrix{  \ul\cE_I \ar[r]^-{\alpha} \ar[d]_{\beta} & \ul\cE_J \ar[d]^{p_t} \\
\ul\cE_{I^{-1}(t)} \ar[r]^-{\mu} & \ul\cE_t \ .
 }
}\end{xy}
\end{equation}
We require that $\mu: \ul\cE_i{I^{-1}(t)} \ \to \ul \cE_t$  restricted to fibers over 
$\mu: \ul\cB_{I^{-1}(t)} \to \ul\cB_t$
be homotopy equivalent to the  smash product
$$\prod_{i \in I^{-1}(t)} (X^i)_\ell  \ \to \ (\prod_{i \in I^{-1}(t)} X^i)_\ell \ \to \ 
(X^t)_\ell \ .$$
%\item
%Denote by $\mu: \ul n \to \ul 1 \in \cF$ the pointed map sending $0 < i \leq n$ to 1.
%For any $I = (i_1,\ldots,i_n)\in \ul\cN(\bf n)$ with $j = \sum i_s$, the composition of 
%$\rho_I$ with the fiberwise smash product of (\ref{eqn:smash-ell}) is the map $\ul\cE(\mu): \ul\cE_I \to \ul\cE_{j }$.
%\item
%For any $\alpha: {\bf n} \to {\bf m} \in \cF$ and any $I = (i_1,\ldots,i_n)\in \ul\cN(\bf n)$, let 
%$J = (j_1,\ldots,j_m)$ where $j_t = \sum_{s:\alpha(s)=t} i_s$.  Then the following square commutes
%\begin{equation}
%\label{eqn:smash-alpha}
%\xymatrix{
%\ul\cE_I \ar[d]_{\ul\cE(\alpha)} \ar[r]^-{\rho_{\alpha^{-1}(t)}} & 
%\ul\cE_{\alpha^{-1}(t)} \ar[d]_{\ul\cE(\alpha)} \ar[r] & 
%\prod_{s:\alpha(s)=t} \ul\cE_s \times_{\prod_{s:\alpha(s)=t} \ul\cB_s} \ul\cB_t
%\ar[ld]^{\pi_\alpha} \\
%\ul\cE_J \ar[r]_-{\rho_t} & \ul\cE_t ,
%}
%\end{equation}
%for each $t, \ 0 \not= t \in \ul m$, 
%where $\pi_\alpha$ is given fiberwise by the pairing of (\ref{eqn:smash-ell}) over $\ul\cB_t$.
\end{enumerate}
\end{defn}

\vskip .1in

As in Definition \ref{defn:X-map}, a map $f \to f^\prime$ of  $\bZ/\ell$-completed $X$-fibrations 
is a commutative diagram of $\cF$-spaces with the property that for each $I \in \cN(\bf n)$
the commutative squares
\begin{equation}
\xymatrix{
({\ul\cE})_I  \ar[r]^{\tilde \phi_I}  \ar[d]_{f_I}  &  ({\ul\cE^\prime})_I \ar[d]^{f^\prime_I} \\
({\ul\cB})_I \ar[r]_{\phi_I }& ({\ul\cB^\prime})_I
}
\end{equation}
are homotopy cartesian.  Two $\bZ/\ell$-completed $X$-fibrations 
$f:\ul\cE \to \ul\cB, \ f^\prime:\ul\cE^\prime \to \ul\cB^\prime $  are said to be  
fiber homotopy equivalent if there exists a chain of fiber homotopy equivalences 
$$\ul\cB \ \stackrel{\phi_1}{\leftarrow}  \ \ul\cB_1 \ \stackrel{\phi_2}{\to} \ \ul\cB_2 \  
\stackrel{\phi_3}{ \leftarrow} \quad  \cdots \quad \ul\cB_n \ \stackrel{\phi_{n+1}}{\to} \ \ul\cB^\prime $$
which are covered by a chain of maps of $\bZ/\ell$-completed $X$-fibrations relating \ $f, \ f^\prime$.

\vskip .1in

\begin{construct}
\label{construct:G(X)-ell}
Assume that $X$ is a smash $\ell$-good, pointed, connected simplicial set.
In a manner parallel to the construction of the $\cF$-space $\ul\cB G(X)$ in Construction
\ref{construct:G(X)},  we define $\ul \cB G_\ell(X): \cF \ \to \ (s.sets_*)$ with
$$\ul \cB G_\ell(X)_I \ = \ \prod_{j=1}^n BG_\ell(X^{i_j}) \times 
\prod_{T \subset {\bf n} } B(Iso(X^{i_T}),Iso(X^{i_T})) ,$$
where $T$ runs over pointed subsets of ${\bf n}$  with more than one non-zero element,
$i_T = \sum_{0 \not= j \in T} i_j$, and $Iso(X^{i_T})$ is the function complex of automorphisms
of $X^{i_T}$.   

Using the action of $G_\ell(X^m)$ on  $(X^m)_\ell$ given by Proposition \ref{prop:G-X-ell}(2),
we adapt Construction \ref{construct:X-action} to define the $\cF$-space 
$\ul \cB (G_\ell(X),X_\ell)$ with 
$$(\ul \cB (G_\ell(X),X_\ell)_I \ = \ \prod_{j=1}^n B(G_\ell(X^{2i_j}),(X^{2i_j})_\ell) \times 
\prod_{T \subset {\bf n} } B(Iso(X^{2i_T}),Iso(X^{2i_T})).$$

Projection determines a natural transformation of functors $\cF \ \to (s.sets_*)$ 
\begin{equation}
\label{eqn:pi-G(X)-ell} 
\pi_{X,_\ell}: \ul \cB (G_\ell(X),X_\ell) \ \to \ \ul \cB G_\ell(X).
\end{equation}
which is a $\bZ/\ell$--completed $X$-fibration over $\ul \cB G_\ell(X)$.
\end{construct}

\vskip .1in

\begin{remark}
\label{rem:contrast}
We emphasize that the construction of $\pi_{X_\ell}: \ul \cB (G_\ell(X),X_\ell) \ \to \ \ul \cB G_\ell(X)$
in (\ref{eqn:pi-G(X)-ell}) differs from that of Construction \ref{construct:G(X)} with $X$ replaced by $X_\ell$.
One difference is that  $(X^m)_\ell$ is not the $m$-fold smash 
product of $X_\ell$; another is that $G_\ell(X) \ \not= \ G(X_\ell)$.
\end{remark}

\vskip .1in

In  the following proposition, we utilize the fiber-wise $\bZ/\ell$-completion 
$(\bZ/\ell)^\bu(-)$ (of a pointed fibration) as given in section I.8 of \cite{Bo-Kan}.

\begin{prop}
\label{prop:X-ell-cases}
Let $X$ be a smash $\ell$-good, pointed, connected simplicial set.
\begin{enumerate}
\item
The fiber-wise $\bZ/\ell$-completion $(\bZ/\ell)_\infty^\bu(f)$ of an $X$-fibration 
$f: \ul\cE \to \ul\cB$ is a $\bZ/\ell$-completed $X$-fibration over $\ul\cB$.
\item
If $f: \ul\cE \to \ul\cB$ is a $\bZ/\ell$-completed $X$-fibration over $\ul\cB$, then the
fiber-wise $\bZ/\ell$-completion $(\bZ/\ell)_\infty^\bu(f)$ is also a $\bZ/\ell$-completed $X$-fibration over $\ul\cB$
and the canonical map $f \to (\bZ/\ell)_\infty^\bu(f)$ is a fiber homotopy equivalence. 
\item
If $f: \ul\cE \to \ul\cB$ is an $X$-fibration and if each $\ul\cB_i$ is simply connected and
$\ell$-good, then $(\bZ/\ell)_\infty(f): (\bZ/\ell)_\infty(\ul\cE) \ \to \ (\bZ/\ell)_\infty(\ul\cB)$
is a  $\bZ/\ell$-completed $X$-fibration over $(\bZ/\ell)_\infty(\ul\cB)$.
\item
Consider a commutative square of $\cF$-spaces
\begin{equation}
\label{eqn:pull-fib}
\xymatrix{
{\ul\cE^\prime}  \ar[r]^{\tilde \phi}  \ar[d]_{f^\prime}  &  {\ul\cE} \ar[d]^{f} \\
{\ul\cB^\prime} \ar[r]^\phi & {\ul\cB}
}
\end{equation}
such that
(\ref{eqn:pull-fib}) is cartesian when restricted to each $I \in \cN(\bf n)$.
If $f^\prime$ is a $\bZ/\ell$-completed $X$-fibration, then so is $f$.

Moreover, if $f$  is an $\bZ/\ell$-completed $X$-fibration and if
$\phi: {\ul\cB^\prime} \ \to \  {\ul\cB}$ is a homotopy equivalence, 
then $f^\prime$ is an $\bZ/\ell$-completed $X$-fibration.
\end{enumerate}
\end{prop}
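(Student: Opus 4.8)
The plan is to verify each of the four assertions by reducing everything to the fiber-wise statements in \cite[\S I.8]{Bo-Kan} applied one $I \in \ul\cN(\bf n)$ at a time, together with the defining conditions of Definition \ref{defn:ell-fibration} and the stability of those conditions under the operations in play. Recall that the only structure to check is, for each $I = (i_1,\ldots,i_n)$: (i) the fibers of the relevant map are homotopy equivalent to $(\bZ/\ell)_\infty(X^i)$ in degree $\bf 1$; (ii) the Segal-type product map $\rho_I$ is an equivalence; (iii) the composite of $\rho_I$ with the fiberwise smash product of (\ref{eqn:smash-ell}) recovers $\ul\cE(\mu)$. Throughout I would use that $(\bZ/\ell)_\infty(-)$ and the fiber-wise version $(\bZ/\ell)^\bu(-)$ both preserve (homotopy) fiber products up to the canonical retraction \cite[I.7.2]{Bo-Kan}, so conditions (ii) and (iii) transport formally; the real content is always condition (i), where smash $\ell$-goodness of $X$ enters.

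For (1): given an $X$-fibration $f$, the fiber of $(\bZ/\ell)^\bu(f)$ over a point of $\ul\cB_i$ is $(\bZ/\ell)_\infty$ of the fiber of $f$, hence homotopy equivalent to $(\bZ/\ell)_\infty(Sin(|X^i|)) \simeq (\bZ/\ell)_\infty(X^i)$ since $X$ (hence $X^i$) is $\ell$-good and $X \to Sin(|X|)$ is a weak equivalence; that is condition (i). Conditions (ii), (iii) follow because $(\bZ/\ell)^\bu$ applied fiberwise commutes with the relevant finite homotopy limits and because the fiberwise smash product of (\ref{eqn:smash-ell}) is by construction the $\bZ/\ell$-completion of the smash product used in Definition \ref{defn:X-fibration}(3). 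For (2): if $f$ is already a $\bZ/\ell$-completed $X$-fibration, its fibers are $(\bZ/\ell)_\infty(X^i)$, which (being $\bZ/\ell$-complete Kan complexes, using that $X$ is $\ell$-good so $(X^i)_\ell$ is $\ell$-good) are taken by $(\bZ/\ell)^\bu$ to homotopy-equivalent complexes, and the canonical map $f \to (\bZ/\ell)^\bu(f)$ restricts on each fiber to the $\bZ/\ell$-completion map of an $\ell$-good (indeed complete) space, hence a weak equivalence; thus it is a fiber homotopy equivalence by the characterization of such (c.f. Definition \ref{defn:ell-fibration} and Proposition \ref{prop:obvious}).

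For (3): here one replaces the fiberwise completion by the honest completion $(\bZ/\ell)_\infty$, which one may do because each $\ul\cB_i$ is $\ell$-good, so by \cite[Lemma I.8.3]{Bo-Kan} (or the discussion of \S I.8) $(\bZ/\ell)_\infty(f)$ over $(\bZ/\ell)_\infty(\ul\cB)$ agrees up to equivalence with the fiberwise completion $(\bZ/\ell)^\bu(f)$, and then (1) applies; alternatively, one checks (i), (ii), (iii) directly using that $(\bZ/\ell)_\infty$ of a fibration with $\ell$-good base and $\ell$-good fiber is a fibration with fiber the completed fiber. For (4): the first half is immediate because conditions (i)--(iii) for $f$ are pulled back from those for $f'$ along a map which is cartesian on each $I$ — the fiber of $f_I$ over any point is the fiber of $f'_I$ over its image, and $\rho_I$ and the smash-product condition for $f$ are the base changes of those for $f'$. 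The second half, where $f$ is a $\bZ/\ell$-completed $X$-fibration and $\phi$ is a homotopy equivalence, follows because a homotopy equivalence of $\cF$-spaces over $\ul\cN$ restricts to a homotopy equivalence on each $\ul\cB'_I \to \ul\cB_I$ (Proposition \ref{prop:obvious}), and pulling back a fibration with a given fiber along such an equivalence yields a fibration with the same fiber, while conditions (ii), (iii) are again transported along the equivalence.

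The main obstacle I anticipate is purely bookkeeping rather than conceptual: making precise that the various homotopy pullback squares defining conditions (ii) and (iii) are genuinely preserved by $(\bZ/\ell)^\bu(-)$, since smash products of Kan complexes are not Kan and $\bZ/\ell$-completion does not commute with smash products on the nose — this is exactly the point the paper has been careful about, and one must invoke the canonical retraction of \cite[I.7.2]{Bo-Kan} (the same one used to define (\ref{eqn:smash-ell})) to identify $(\bZ/\ell)^\bu$ of the fiberwise smash product with the smash product of the completed fibers. Once that identification is in hand, all three conditions of Definition \ref{defn:ell-fibration} for each output fibration follow mechanically from the corresponding conditions for the input, and the fiber homotopy equivalence in (2) is read off fiber by fiber via Proposition \ref{prop:obvious}.
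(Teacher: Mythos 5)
Your proposal is correct and follows essentially the same route as the paper's proof: both verify the three conditions of Definition \ref{defn:ell-fibration} one $I$ at a time, using functoriality of the fiber-wise completion for (1), the idempotence (up to homotopy) of $(\bZ/\ell)_\infty$ on the fibers for (2), the Bousfield--Kan comparison $fib((f_I)_\ell) \to (fib(f_I))_\ell$ over an $\ell$-good base for (3), and transport of the conditions along $I$-wise (co)cartesian squares for (4). Your added care about the retraction of \cite[I.7.2]{Bo-Kan} identifying the completed fiberwise smash product is consistent with how the paper defines (\ref{eqn:smash-ell}) and is not a departure from its argument.
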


\begin{proof}
The first assertion follows from the functoriality of the fiber-wise $\bZ/\ell$-completion
functor sending conditions of Definition \ref{defn:X-fibration} to conditions 
of Definition \ref{defn:ell-fibration}.  To prove assertion (2), observe that
if the fibers of $f_I$  are homotopic to $(X^I)_\ell$, then the fibers of
$(\bZ/\ell)^\bu(f_I)$ are homotopy equivalent to $((X^I)_\ell)_\ell)$ which is naturally 
homotopy equivalent to $(X^I)_\ell$.

If each $\ul\cB_i$ is $\ell$-good, then a spectral sequence argument for the $\bZ/\ell$
homology of fibrations implies that for each $I \in \ul\cN(\bf n)$ the natural map from 
the fiber of the $\bZ/\ell$-completion of $f_I: \ul \cE_I \to \ul \cE_I $ to the
$\bZ/\ell$-completion of the fiber of $f_I$, 
$$fib((f_I)_\ell) \quad \to \quad  (fib(f_I))_\ell,$$
is a homotopy equivalence.  This enables us to conclude that $(\bZ/\ell)_\infty(f)$ satisfies 
the conditions  of Definition \ref{defn:ell-fibration}, thereby verifying assertion (3).

To prove assertion (4), first assume that $f^\prime$ is an $\bZ/\ell$-completed $X$-fibration.
The assumption that the restriction of (\ref{eqn:pull-fib}) to each $I$
is a homotopy pull-back square implies that $f$ satisfies the
conditions to be an $\bZ/\ell$-completed $X$ fibration.  The converse is similarly verified, 
 provided that $\phi: \ul\cB^\prime \to \ul\cB$ is a homotopy equivalence.
\end{proof}

\vskip .1in

We give the $\bZ/\ell$-completed analogue of the construction of principal $d$-simplices given in
Definition \ref{defn:principal-simplex}.  We  apply
the fiber-wise $\ell$-completion functor $(\bZ/\ell)_\infty^\bu(-)$ to an $\ell$-completed $X$-fibration
$f: \ul\cE \to \ul\cB$ in order to obtain $\bG_\ell(X^I)$-actions on $(\bZ/\ell)_\infty^\bu(f_I)$ using the natural retraction 
$(\bZ/\ell)_\infty^\bu(-) \circ (\bZ/\ell)_\infty^\bu(-) \ \to \ (\bZ/\ell)_\infty^\bu(-)$.   This is
strictly parallel to the use of $Sin(|Sin(|-|)|) \to Sin(|-|)$ in Lemma \ref{lem:action-Sin}.

\begin{defn}
\label{defn:principal-ell-simplex}
Assume that $X$ is a pointed, connected simplicial set which is smash $\ell$-good.
Let $f: \ul\cE \ \to \ \ul\cB$ be an $\bZ/\ell$-completed $X$-fibration 
and denote by $(\bZ/\ell)_\infty^\bu(f): (\bZ/\ell)_\infty^\bu(\ul\cE) \to \ul\cB$ the fiberwise 
$\ell$-completion of $f$.  Consider some $I = (i_,\ldots,i_n) \in \ul\cN({\bf n})$.   We say that a map $\phi: X^I \times \Delta[d]
 \to (\bZ/\ell)_\infty^\bu(\ul\cE)_I$ is a $\bZ/\ell$-completed principal $d$-simplex of $f$ provided that
 $\phi$ satisfies the following conditions.
 \begin{enumerate}
 \item
 The composition $f_I \circ \phi: X^I\times \Delta[d] \to (\bZ/\ell)_\infty^\bu(\ul\cE)_I \to \ul\cB_I$ factors though the
 projection, $X^I\times \Delta[d] \to \Delta[d] \to \ul\cB_I$  and determines a $\bZ/\ell$-equivalence \\
 $X^I \times \Delta[d] \ \to \ (\bZ/\ell)_\infty^\bu(\ul\cE)_I \times_{\ul\cB_I} \Delta[d]$.
 \item
  For every surjective map $\alpha: {\bf n} \to {\bf r}$ in $\cF$,  set $\alpha(I) = (j_1,\ldots,j_r) \in \ul\cN(r), \ j_s = \sum_{\alpha(i) = s} i_s$.
 Then $\phi$ induces $\phi: X^{\alpha(I)} \times \Delta[d] \to Sin(|(\bZ/\ell)_\infty^\bu(\ul\cE_{\alpha(I)}) |)$ fitting in the commutative square
 \begin{equation}
\xymatrix{
X^I \times \Delta[d] \ar[d]_{\alpha} \ar[r]^-\phi &  Sin(|(\bZ/\ell)_\infty^\bu(\ul\cE)_I)|) \ar[d]^{\alpha} \\
X^{\alpha(I)} \times \Delta[d] \ar[r]^-{\phi^\alpha} & Sin(|(\bZ/\ell)_\infty^\bu(\ul\cE)_{\alpha(I)})|
}
\end{equation}
such that $\phi^\alpha$ induces a weak homotopy equivalence 
$$X^{\alpha(I)} \times \Delta[d] \ \to \ Sin(|(\bZ/\ell)_\infty^\bu(\ul\cE_{\alpha(I)}) |) 
\times_{Sin(|(\bZ/\ell)_\infty^\bu(\ul\cB_{\alpha(I)}|) )} \Delta[d].$$
\end{enumerate}
\end{defn}

\vskip .1in

We proceed to construct the  principal $\cF$-space map over $\ul B$ associated to 
an $\bZ/\ell$-completed $X$-fibration $f: \ul\cE  \to \ul\cB$. 

\begin{construct}
\label{construct:principal-ell}
Let $X$ be a pointed, connected simplicial set which is smash $\ell$-good.
Consider a $\bZ/\ell$-completed $X$-fibration 
 $f: \ul\cE \ \to \ \ul\cB$ \ over the $\cF$-space ${\ul \cB}$ over $\ul \cN$. 
 For any $I = (i_1,\ldots,i_n) \in \ul\cN$, we define \ $P_\ell(f_I)$ \
 to be the sub-complex of the 
simplicial mapping complex $\ul{Hom}_*(X^I,(\bZ/\ell)_\infty^\bu(\ul\cE_I)$ consisting of 
 $\bZ/\ell$-completed principal $d$-simplices of $f_I$.

So defined, $P_\ell(f_I)$ admits a right action by $G_\ell(X^I) \ \equiv \
 \prod_{s=1}^n G_\ell(X^{i_s})$.  Namely, if $\alpha: \Delta[d]\times X^I \to X_\ell^I$ is a 
 $d$-simplex of $G_\ell(X^I)$ and $\phi: \Delta[d] \times X^I \to (\bZ/\ell)_\infty^\bu(\ul\cE)_I$ is a principal $d$-simplex,
 then we define $\phi\cdot \alpha$ as the composition
 \begin{equation}
 \label{eqn:ell-action}
 \Delta[d]\times X^I \ \stackrel{1\times\alpha} \to  \Delta[d]\times (X^I)_\ell \ \stackrel{pr_1 \times\beta_\ell}{\to} \
 \Delta[d] \times ((\bZ/\ell)_\infty^\bu(\ul\cE_I))_\ell 
 \ \stackrel{1 \times \eta}{\to} \Delta[d]\times (\bZ/\ell)_\infty^\bu(\ul\cE_I)
 \end{equation}
 where the last map is associated to the canonical map map $\eta: (\bZ/_\ell)_\infty(-) \circ 
 (\bZ/_\ell)_\infty(-)  \to (\bZ/_\ell)_\infty(-)$.

For any $n > 0, \ I\in \cF({\bf n})$ and any $\alpha: {\bf n} \to {\bf m}$, the following square commutes
\begin{equation}
\label{eqn:prin-compat}
\xymatrix{
P(f_I) \times G(X^I) \ar[r] \ar[d]_\alpha &  P(f_I) \ar[d]^\alpha \\
P(f_{\alpha(I})) \times G(X^{\alpha(I)}) \ar[r] & P(f_{\alpha(I)}) .
}
\end{equation}
By construction, $P((\bZ/\ell)_\infty^\bu(f_I))$ is a principal $G_\ell(X^I)$-fibration over $\ul\cB_I$.
\end{construct}

\vskip .1in

We now construct $\cF$-spaces associated with the action of $G_\ell(X^I)$ on $P((\bZ/\ell)^\bu(f_I))$.

\begin{construct} 
\label{construct:principal-ell-spaces}
Retain the notation and hypotheses of Construction \ref{construct:principal-ell}.
There is a commutative diagram of simplical sets
\begin{equation}
\label{eqn:2-sided-ell}
\xymatrix{
 (\bZ/\ell)_\infty^\bu(\ul\cE_I) \ar[d]^{f_I}  & \ar[l]_-{\tilde p_{f,I}}  B(P((\bZ/\ell)_\infty^\bu(f_I)),G_\ell(X^I),X_\ell^I )) 
 \ar[d]^{\pi_{X,I}} \ar[r]^-{\tilde q_{f,I}}  &   
 B(G_\ell(X_\ell^I),X^I )) \ar[d]^{\pi_{X,I}} &  \\
\ul\cB_I   & \ar[l]^-{p_{f,I}}  B(P((\bZ/\ell)_\infty^\bu(f_I)),G_\ell(X^I)) \ar[r]_-{q_{f,I}} &  BG_\ell(X^I)
}
\end{equation}
for each $I \in \ul\cN({\bf n})$, where the right horizontal maps are projections, the lower left map
is given by projection to $P((\bZ/\ell)_\infty^\bu(f_I))$ followed by the structure map
$P((\bZ/\ell)_\infty^\bu(f_I)) \to \ul\cB_I$ ,
and the upper left map is given by
sending a pair of simplices $\phi: \Delta[d] \times X^I \to P((\bZ/\ell)_\infty^\bu(f_I))$ and 
$\alpha: \Delta[d] \times X^I \to X^I_\ell$ to
$\phi \cdot \alpha$ as given in (\ref{eqn:ell-action}).

The vertical maps of (\ref{eqn:2-sided-ell}) are $\bZ/\ell$-completed $X$-fibrations, the
left and right squares of (\ref{eqn:2-sided-ell}) constitute maps of $\bZ/\ell$-completed $X$-fibrations, 
and the left square is a homotopy
equivalence of $\bZ/\ell$-completed $X$-fibrations.
\end{construct}

\vskip .1in

We basically repeat the proof of Proposition \ref{prop:func-P(f)} to verify the following
$\bZ/\ell$-completed analogue.  

\vskip .1in

\begin{prop}
\label{prop:func-P(f)-ell}
Consider a pointed, connected simplicial set $X$ which is smash $\ell$-good.
Let  $f: \ul\cE \ \to \ \ul\cB$ be a $\bZ/\ell$-completed $X$-fibration 
 over the $\cF$-space ${\ul \cB}$ over $\ul \cN$ with section $\eta: \ul\cB \to \ul\cE$. 
The diagrams of (\ref{eqn:2-sided-ell}) for all $I \in \ul\cN({\bf n}), n > 0$ determine
a commutative diagram of $\cF$-spaces
\begin{equation}
\label{eqn:2-sided-func-ell}
\xymatrix{
 \ul\cE \ar[dr]_f \ar[r]^-\epsilon & (\bZ/\ell)_\infty^\bu(\ul\cE)  \ar[d]^{(\bZ/\ell)_\infty^\bu(f)} &
 \ar[l]_-{\tilde p_{f,\ell}} \ul\cB (P(\bZ/\ell)_\infty^\bu(f),G_\ell(X),X_\ell) \ar[d]_{p_{f,X,\ell}} \ar[r]^-{\tilde q_{f,\ell}} &   
 \ul\cB (G_\ell(X),X_\ell) \ar[d]_{\pi_{X,\ell}}   \\
& \ul\cB   & \ar[l]^-{p_{f,\ell}} \ul\cB (P(\bZ/\ell)_\infty^\bu(f),G_\ell(X)) \ar[r]_-{q_{f,\ell}} &  \ul\cB G_\ell(X)
}
\end{equation}
whose vertical maps are $\bZ/\ell$-completed $X$-fibrations and whose left horizontal maps are homotopy
equivalences.  The map $\epsilon: \ul\cE   \to (\bZ/\ell)_\infty^\bu(\ul\cE)$
appearing in the upper left corner of (\ref{eqn:2-sided-func-ell}) is an equivalence, induced by the natural 
transformation $id(-) \to  (\bZ/\ell)_\infty^\bu(-)$.
\end{prop}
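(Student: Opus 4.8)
The plan is to mimic, nearly verbatim, the proof of Proposition \ref{prop:func-P(f)}, systematically replacing the left $G(X^I)$-action on $Sin(|X^I|)$ by the right $G_\ell(X^I)$-action on $P((\bZ/\ell)_\infty^\bu(f_I))$ furnished by Construction \ref{construct:principal-ell}, and replacing $X^I$ by its completion $X_\ell^I$ throughout. First I would record, using Proposition \ref{prop:X-ell-cases}(2), that the fiber-wise completion $(\bZ/\ell)_\infty^\bu(f)$ is again a $\bZ/\ell$-completed $X$-fibration over $\ul\cB$ and that the canonical map $\epsilon: \ul\cE \to (\bZ/\ell)_\infty^\bu(\ul\cE)$ induced by the natural transformation $id(-) \to (\bZ/\ell)_\infty^\bu(-)$ is a fiber homotopy equivalence over $\ul\cB$; this produces the left-hand triangle of (\ref{eqn:2-sided-func-ell}) and lets us work with $(\bZ/\ell)_\infty^\bu(f)$ in place of $f$ for the remainder of the argument.

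Next I would assemble the diagrams (\ref{eqn:2-sided-ell}) of Construction \ref{construct:principal-ell-spaces}, indexed by all $\ul n$ and $I \in \ul\cN(\ul n)$, into a single commutative diagram of functors $\cF \to (s.sets_*)$. This proceeds exactly as in Construction \ref{construct:X-action} and Construction \ref{construct:G(X)-ell}: to the $I$-th level of each bar-construction $\cF$-space in the square one adjoins the factor $\prod_{T \subset {\bf n}} B(Iso(X^{2i_T}),Iso(X^{2i_T}))$, with $T$ ranging over pointed subsets of ${\bf n}$ having more than one nonzero element and $i_T = \sum_{0 \neq j \in T} i_j$; and for a morphism $\alpha: {\bf n} \to {\bf m}$ of $\cF$ carrying $I$ to $J$, the induced map on bar-construction factors is defined using the smash products $G_\ell(X^m) \times G_\ell(X^n) \to G_\ell(X^{m+n})$ of (\ref{eqn:ell-prod}) and $X^m_\ell \times X^n_\ell \to X^{m+n}_\ell$ of (\ref{eqn:smash-ell}), while on the $Iso$-factors it is the permutation-and-conjugation map of Construction \ref{construct:G(X)}. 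The output is the commutative diagram of $\cF$-spaces (\ref{eqn:2-sided-func-ell}), whose right-hand column is the $\bZ/\ell$-completed $X$-fibration $\pi_{X,\ell}$ of (\ref{eqn:pi-G(X)-ell}).

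It then remains to check, as in Proposition \ref{prop:func-P(f)}, that the vertical maps of (\ref{eqn:2-sided-func-ell}) are $\bZ/\ell$-completed $X$-fibrations and that its left horizontal maps are homotopy equivalences of $\cF$-spaces. For the vertical maps: $(\bZ/\ell)_\infty^\bu(f)$ is a $\bZ/\ell$-completed $X$-fibration by Proposition \ref{prop:X-ell-cases}(2); $\pi_{X,\ell}$ is one by (\ref{eqn:pi-G(X)-ell}); and $p_{f,X,\ell}$ is one by the same argument, since over each $I$ it is, up to the $Iso$-factors, the fibration $\pi_{X,I}$ of (\ref{eqn:2-sided-ell}) with fiber $X^I_\ell \equiv \prod_{s=1}^n X^{i_s}_\ell$ and the $\cF$-structure is built from the smash products (\ref{eqn:smash-ell}), so the conditions of Definition \ref{defn:ell-fibration} are met. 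For the left horizontal maps $\tilde p_{f,\ell}$ and $p_{f,\ell}$: each $P((\bZ/\ell)_\infty^\bu(f_I))$ is, by construction, a principal $G_\ell(X^I)$-fibration over $\ul\cB_I$, so each $\tilde p_{f,I}$ and $p_{f,I}$ of (\ref{eqn:2-sided-ell}) is a homotopy equivalence, and Proposition \ref{prop:obvious} promotes these fiberwise equivalences to homotopy equivalences of $\cF$-spaces. Commutativity of (\ref{eqn:2-sided-func-ell}) is inherited square by square from that of (\ref{eqn:2-sided-ell}).

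The step I expect to be the main obstacle is the verification that the maps indexed by the non-identity morphisms of $\cF$ are well defined and really do render (\ref{eqn:2-sided-func-ell}) commutative. The difficulty, which is absent from the uncompleted Proposition \ref{prop:func-P(f)}, is that the smash product of $\bZ/\ell$-completions is not $\bZ/\ell$-complete: the pairing $X^m_\ell \times X^n_\ell \to X^{m+n}_\ell$ is built from the Bousfield-Kan retraction $(X^m \times X^n)_\ell \to (X^m)_\ell \times (X^n)_\ell$ of \cite[I.7.2]{Bo-Kan} together with the triple multiplication $\eta: (\bZ/\ell)_\infty \circ (\bZ/\ell)_\infty \to (\bZ/\ell)_\infty$, rather than from a strict monoidal structure. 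Consequently the associativity, unit, and equivariance identities needed for the two-sided bar constructions hold only after a systematic appeal to the triple identities for $\eta$ and the naturality of that retraction, together with the commutative squares (\ref{eqn:ell-ell}) of Proposition \ref{prop:G-X-ell}(3) — precisely the bookkeeping already performed for the monoid $G_\ell(X^m)$ in the proof of Proposition \ref{prop:G-X-ell} and for the right action (\ref{eqn:ell-action}) in Construction \ref{construct:principal-ell}. This is routine but tedious, and I would relegate the full diagram chase to the reader.
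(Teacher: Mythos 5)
Your proposal is correct and follows essentially the same route as the paper's proof: repeat Construction \ref{construct:X-action} with the right $G_\ell(X^I)$-action on $P((\bZ/\ell)_\infty^\bu(f_I))$, adjoin the $Iso$-factors to assemble the levelwise diagrams (\ref{eqn:2-sided-ell}) into $\cF$-spaces, deduce that the left horizontal maps are equivalences from the principal $G_\ell(X^I)$-fibration structure, and handle the left triangle via Proposition \ref{prop:X-ell-cases}. Your explicit flagging of the non-monoidality of the completed smash product (resolved by the triple identities for $\eta$ and the squares (\ref{eqn:ell-ell})) is a point the paper leaves implicit, and your citation of Proposition \ref{prop:X-ell-cases}(2) for $\epsilon$ is in fact the more apt reference.
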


The proof of Theorem \ref{thm:X-ell-universal} given below is a straight forward adaption of the proof of
Theorem \ref{thm:X-universal} to the $\bZ/\ell$-completed context.

\begin{thm} 
\label{thm:X-ell-universal}
Let $X$ be a pointed, connected simplicial set which is smash $\ell$-good.
The $\bZ/\ell$-completed $X$-fibration of $\cF$-spaces over $\ul\cN$ given
in (\ref{eqn:pi-G(X)-ell}),
\ $\pi_{X,\ell}: \ul \cB (G_\ell(X),X_\ell) \ \to \ \ul \cB G_\ell(X),$ \
satisfies the following universal property:

For any special $\cF$-space ${\ul \cB}$ over $\ul\cN$ and any $\ell$-completed $X$-fibration 
$f: {\ul \cE} \ \to {\ul\cB}$ over ${\ul\cB}$,
there is a unique homotopy class of maps of $\cF$-spaces $\phi: {\ul \cB} \ \to \ {\ul\cB}G_\ell(X)$
such that $\phi^*(\pi_{X,\ell})$ is fiber homotopy equivalent to $f$ as $\bZ/\ell$-completed 
$X$-fibrations over ${\ul\cB}$.
\end{thm}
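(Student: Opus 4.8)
The plan is to mimic the proof of Theorem~\ref{thm:X-universal} exactly, substituting the $\bZ/\ell$-completed constructions of this section for their uncompleted counterparts. The decisive input is Proposition~\ref{prop:func-P(f)-ell}, which plays the role that Proposition~\ref{prop:func-P(f)} played in the uncompleted case. First I would observe that Proposition~\ref{prop:func-P(f)-ell} already exhibits, for any $\bZ/\ell$-completed $X$-fibration $f: \ul\cE \to \ul\cB$, a chain of fiber homotopy equivalences
\begin{equation}
\label{eqn:ell-chain}
\ul\cB \quad \stackrel{p_{f,\ell}}{\leftarrow} \quad \ul\cB(P((\bZ/\ell)_\infty^\bu(f)),G_\ell(X)) \quad \stackrel{q_{f,\ell}}{\to} \quad \ul\cB G_\ell(X),
\end{equation}
together with the covering maps $\tilde p_{f,\ell}$, $\tilde q_{f,\ell}$ and the map $\epsilon: \ul\cE \to (\bZ/\ell)_\infty^\bu(\ul\cE)$, so that $f$ is fiber homotopy equivalent to $(q_{f,\ell})^*(\pi_{X,\ell})$ pulled back along the homotopy class $q_{f,\ell}\circ p_{f,\ell}^{-1}: \ul\cB \to \ul\cB G_\ell(X)$ (using Proposition~\ref{prop:X-ell-cases}(2) to absorb $\epsilon$ into a fiber homotopy equivalence). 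Here the homotopy inverse $p_{f,\ell}^{-1}$ exists in the model category of $\cF$-spaces since $p_{f,\ell}$ is a homotopy equivalence of $\cF$-spaces and all $\cF$-spaces of interest are cofibrant. This establishes that the correspondence $\phi \mapsto \phi^*(\pi_{X,\ell})$ is \emph{surjective} onto fiber homotopy equivalence classes.

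Next I would define the inverse assignment, sending a $\bZ/\ell$-completed $X$-fibration $f$ to the homotopy class of $q_{f,\ell}\circ p_{f,\ell}^{-1}$, and check it is well defined on fiber homotopy equivalence classes. As in the proof of Theorem~\ref{thm:X-universal}, it suffices by the definition of fiber homotopy equivalence (and an argument like \cite[Prop~3.4]{F80}, using the model structure on $\cF$-spaces) to treat a single homotopy equivalence $\ul\cE \to \ul\cE'$ of $\bZ/\ell$-completed $X$-fibrations over a fixed $\ul\cB$; such an equivalence induces a map of principal $G_\ell(X^I)$-fibrations $P((\bZ/\ell)_\infty^\bu(f_I)) \to P((\bZ/\ell)_\infty^\bu(f'_I))$ for each $I$, hence a map of $\cF$-spaces $\ul\cB(P((\bZ/\ell)_\infty^\bu(f)),G_\ell(X)) \to \ul\cB(P((\bZ/\ell)_\infty^\bu(f')),G_\ell(X))$ commuting with the maps $p_{f,\ell},p_{f',\ell}$ down to $\ul\cB$ and with the maps $q_{f,\ell},q_{f',\ell}$ to $\ul\cB G_\ell(X)$. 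The resulting commutative diagram, the exact analogue of (\ref{eqn:model-inverse}), shows that the homotopy class of $q_{f,\ell}\circ p_{f,\ell}^{-1}$ depends only on the fiber homotopy equivalence class of $f$.

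Finally I would verify that the two assignments are mutually inverse. One direction is immediate from the surjectivity argument: starting from $f$ and forming $\phi = q_{f,\ell}\circ p_{f,\ell}^{-1}$, the fibration $\phi^*(\pi_{X,\ell})$ is fiber homotopy equivalent to $f$ by (\ref{eqn:ell-chain}). For the other direction, starting from a homotopy class $\phi: \ul\cB \to \ul\cB G_\ell(X)$, one must show that the class associated to $\phi^*(\pi_{X,\ell})$ is again $\phi$; this is the computation carried out in the last part of the proof of \cite[Thm~6.1]{F80}, which goes through here with only notational changes once one notes that $P((\bZ/\ell)_\infty^\bu(\pi_{X,\ell}))$ is $G_\ell(X)$-equivariantly the path-space–type construction whose quotient recovers $\ul\cB G_\ell(X)$. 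The main obstacle I anticipate is not in any single step but in the bookkeeping: one must ensure throughout that all the mapping complexes $\ul{Hom}_*(X^I, -)$, the two-sided bar constructions, and the fiber-wise completion $(\bZ/\ell)_\infty^\bu(-)$ interact correctly with the $\cF$-structure over $\ul\cN$ — in particular that the smash-product pairing (\ref{eqn:smash-ell}) on completed spheres is compatible with the $G_\ell(X^I)$-actions so that the diagrams (\ref{eqn:2-sided-ell}) assemble into genuine maps of $\cF$-spaces — and that the homotopy equivalences produced are equivalences of $\cF$-spaces in the sense of Proposition~\ref{prop:obvious}, i.e.\ equivalences over each $I \in \ul\cN(\bf n)$. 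These points are already isolated in Construction~\ref{construct:principal-ell}, Construction~\ref{construct:principal-ell-spaces}, and Proposition~\ref{prop:func-P(f)-ell}, so the proof reduces to citing them in the right order.
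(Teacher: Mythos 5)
Your proposal is correct and follows essentially the same route as the paper's own proof: surjectivity via Proposition \ref{prop:func-P(f)-ell}, well-definedness of the inverse via the diagram analogous to (\ref{eqn:model-inverse}), and the final verification deferred to the last part of the proof of \cite[Thm 6.1]{F80}. No substantive differences.
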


\vskip .1in

\begin{remark}
\label{rem:ell-local}
The discussion of this section culminating in Theorem \ref{thm:X-ell-universal} applies
essentially verbatim to the ``Bousfield-Kan localization at $\ell$"  and the ``Bousfield-Kan rational
localization".   Namely, we simply replace $(\bZ/\ell)_\infty(-)$ by $(Z_{(\ell)}) _\infty(-)$ 
and $(\bZ/\ell)_\infty(-)$ by $\bQ_\infty(-)$.  
 \end{remark}
 
 \vskip .2in

 %%%%%%%%%%%%%
 %%%%%%%%%%%%%

\section{Oriented $\bZ/\ell$-completed $X$-fibrations}
\label{sec:or-ell-complete}

In this section, we discuss  modifications of Section \ref{sec:ell-complete} needed 
to replace the monoid $\bG_\ell(X)$ of $\bZ/\ell$-equivalences
$X \to X_\ell$ by its connected component $\bG_\ell^o(X)$ and impose conditions on $X$
such that $\bG_\ell^o(X)$ is ``well-behaved".  For us, the relevant example is that of $X = S^2$.

\vskip .1in

 \begin{defn}
 \label{defn:orient}
 A $\bZ/\ell$-completed $X$-fibration $f: \ul\cE \to \ul\cB$ (as in Definition \ref{defn:ell-fibration}) 
 is said to be an oriented $\bZ/\ell$-completed $X$-fibration if the homotopy equivalences
 on fibers of Definition \ref{defn:ell-fibration} are homotopic to the identity and if the maps 
 on fibers of Definition \ref{defn:ell-fibration}(3) are homotopic to the appropriate smash products.
 
 Two oriented $\bZ/\ell$-completed $X$-fibrations 
$f:\ul\cE \to \ul\cB, \ f^\prime:\ul\cE^\prime \to \ul\cB^\prime $  are said to be 
oriented fiber homotopy equivalent if there exists a chain of homotopy equivalences 
$$\ul\cB \ \stackrel{\phi_1}{\leftarrow}  \ \ul\cB_1 \ \stackrel{\phi_2}{\to} \ \ul\cB_2 \  
\stackrel{\phi_3}{ \leftarrow} \quad  \cdots \quad \ul\cB_n \ \stackrel{\phi_{n+1}}{\to} \ \ul\cB^\prime $$
which are covered by a chain of maps of between oriented $\bZ/\ell$-completed $X$-fibrations relating 
\ $f, \ f^\prime$ whose maps on fibers are homotopic to the identity.
 \end{defn}
 
 \vskip .1in
 
 \begin{construct}
 \label{construct:G(X)-ell-or}
 Assume that $X$ is a pointed, connected simplicial set such that permuting factors of  the smash product 
$X^n$ for each $n > 0$ is weakly homotopic to the identity.
In a manner parallel to the construction of the $X$-fibration $\pi_X$ in Construction
\ref{construct:G(X)}, the oriented $X$-fibration $\pi_{X,o}$ in Remark \ref{rem:orient},
 and the $\bZ/\ell$-completed $X$-fibration $\pi_{X,\ell}$ in Construction
\ref{construct:G(X)-ell},  we define the oriented $\bZ/\ell$-completed $X$-fibration over $\ul \cB G^o_\ell(X)$
\begin{equation}
\label{eqn:pi-G(X)-ell-or} 
\pi_{X,_\ell,o}: \ul \cB (G^o_\ell(X),X) \ \to \ \ul \cB G^o_\ell(X).
\end{equation}

Thus, we set
$$\ul \cB G_\ell^o(X)_I \ = \ \prod_{j=1}^n BG_\ell^o(X^{i_j}) \times 
\prod_{T \subset {\bf n} } B(Iso(X^{i_T}),Iso^o(X^{i_T})),$$
where $T$ runs over pointed subsets of ${\bf n}$  with more than one non-zero element,
$i_T = \sum_{0 \not= j \in T} i_j$, and $Iso^o(X^{i_T})$ is the connected component of
the function complex of automorphisms of $X^{i_T}$.   
Using the action of $G^o_\ell(X^m) \hookrightarrow G_\ell(X^m)$ on $(X^m)_\ell$ 
given by Proposition \ref{prop:G-X-ell}(2), we set 
$$(\ul \cB (G^o_\ell(X),X_\ell)_I \ = \ \prod_{j=1}^n B(G^o_\ell(X^{2i_j}),(X^{2i_j})_\ell) \times 
\prod_{T \subset {\bf n} } B(Iso(X^{2i_T}),Iso(X^{2i_T})).$$
The map $\pi_{X,_\ell,o}$ is the evident projection.
\end{construct}
 
 \vskip .1in
 
 With notational changes, the proof of Theorem \ref{thm:X-ell-universal} applies
 to prove the following oriented version.

\begin{thm} 
\label{thm:X-ell-universal-o}
Let $X$ be a pointed, connected simplicial set which is smash $\ell$-good such
that $\Sigma_n \hookrightarrow G^o(X^n)$ for all $n > 0$.
Then \ $\pi_{X,\ell,o}$ \ in (\ref{eqn:pi-G(X)-ell-or})
satisfies the following universal property:

For any special $\cF$-space ${\ul \cB}$ over $\ul\cN$ and any oriented $\bZ/\ell$-completed $X$-fibration 
$f: {\ul \cE} \ \to {\ul\cB}$ over ${\ul\cB}$,
there is a unique oriented homotopy class of maps of $\cF$-spaces $\phi: {\ul \cB} \ \to \ {\ul\cB}G^o_\ell(X)$
such that $\phi^*(\pi_{X,\ell,o})$ is fiber homotopy equivalent to $f$ as an oriented $\bZ/\ell$-completed 
$X$-fibrations over ${\ul\cB}$.
\end{thm}

\vskip .1in

The following proposition applies in particular to $X$ with $|X| \simeq S^2$.
The assertions are basically given by Proposition VI.7 of \cite{Bo-Kan}.  We remind the reader
that a pointed, connected topological space is said to be nilpotent if 
$\pi_1(T))$ acts nilpotently on $\pi_i(T)$ for any $i \geq 1$

\begin{prop}
\label{prop:fingen}
Let $X$ be a finite connected simplicial set such that $|X^m|$ is a nilpotent space
with finitely generated homotopy groups for some $m >0$.
\begin{enumerate}
\item
The homotopy groups of $G^o(X^m)$ are finitely generated abelian groups.
\item
The map $\pi_*(G^o(X^m)) \ \to \ \pi_*((G^o(X^m)_\ell)$ is given by tensoring with $\bZ_\ell$.
\item
The map $\pi_*(G^o(X^m)) \ \to \ \pi_*(G_\ell^o(X^m))$ is given by tensoring with $\bZ_\ell$.
\item
The natural map $(G^o(X^m))_\ell \ \to \ G^o_\ell(X^m)$ is a homotopy equivalence.
\end{enumerate}

In particular, if $|X| \simeq S^2$, $\pi_0(G^o(X^m)) \simeq \bZ$ has index 2 in 
$\pi_0(\Omega^\infty S^\infty(S^0))$; for $i > 0$ and $m$ sufficiently large, 
$\pi_i(G^o(X^m))$ is the $i$-th stable homotopy of spheres.  See Example \ref{ex:sphere-spectrum}. 
\end{prop}

\begin{proof}
Assertion (1) is presumably very classical, one reference is \cite[Prop 7.2.(i)]{Bo-Kan}.
By \cite[Prop 7.2.(iii)]{Bo-Kan} (with $W = S^n \wedge X^m$, where $S^n$ here is a finite
simplical set with $|S^n|$ the topological $n$-sphere), this implies that $\pi_n(G^o(X^m))
\to \pi_n(G^o_\ell(X^m))$ induces the isomorphism of assertion (2).  

Moreover, \cite[Prop 7.3(ii)]{Bo-Kan} implies assertion (3).

Thus, assertions (2) and (3) imply that the natural map $(G^o(X^m))_\ell \ \to \ G^o_\ell(X^m)$ is a
weak equivalence.  Since this is map of Kan complexes, it is a homotopy equivalence.
\end{proof} 

\vskip .1in

Proposition \ref{prop:fingen} in conjunction with the explicit constructions of
Construction \ref{construct:G(X)-ell-or} immediately imply the following corollary.
  
 \begin{cor}
\label{cor:cover}
Let $X$ be a finite connected simplicial set such that $|X^m|$ is a nilpotent space
with finitely generated homotopy groups for all $m >0$.
The natural map $\cB G^o(X)  \to \cB G^o_\ell(X)$ induces a homotopy equivalence
of $\cF$-spaces over $\ul \cN$
\begin{equation}
\label{eqn:S2-hom-equiv}
\lambda: (\ul \cB G^o(X))_\ell \quad \stackrel{\approx}{\to} \quad  \ul \cB G^o_\ell(X).
\end{equation}
\end{cor}

\vskip .1in

Proposition \ref{prop:fingen} leads to the following identification of the homotopy groups
of $|| \ul\cB (G^o_\ell(X) ||$.

\begin{prop}
\label{prop:stable-homotopy}
Assume that $|X| \simeq S^2$ and consider the $\Omega$-spectrum $|| \ul\cB (G^o_\ell(X) ||$.
Then $\pi_{i+1}(|| \ul\cB (G^o_\ell(X) ||)$ can be naturally identified with the $\ell$-adic completion
of the $i$-th stable homotopy groups of the spheres for $i > 0$, whereas $\pi_1(|| \ul\cB (G^o_\ell(X) ||)$
can be identified with a subgroup of index 2 in $\bZ_\ell$, the $\ell$-adic completion
of the $0$-th stable homotopy groups of spheres.
\end{prop}

\begin{proof}
Because $\ul\cB (G^o_\ell(X)$ is a special $\cF$-space, the 0-space $|| \ul\cB (G^o_\ell(X) ||)_0$
is the homotopy-theoretic group completion of $\ul\cB (G^o_\ell(X)({\bf 1})$.  This implies that \\
$\pi_{i+1}(|| \ul\cB (G^o_\ell(X) ||)_0)$ equals $\varinjlim_m \pi_i(G^o_\ell(X^m))$.  Thus,
the proposition follows directly from the computations of Proposition \ref{prop:fingen}.
\end{proof}

 \vskip .1in

The $\bZ/\ell$-completion of the universal $X$-fibration 
$\pi_X: \ul \cB (G(X),X) \to \ul \cB G(X)$ of Theorem \ref{thm:X-universal} is 
difficult to compare directly with the universal $\bZ/\ell$-completed $X$-fibration
$\pi_{X,\ell}: \ul \cB (G_\ell(X),X_\ell) \to \ul \cB G(X_\ell)$ of 
Theorem \ref{thm:X-ell-universal} even when $|X|$ is a 2-sphere.
Thanks to Corollary \ref{cor:cover}, we do have such a direct comparison
provided we replace $G(X)$ by $G^o(X)$.

\begin{prop}
\label{prop:pi-o}
Let $X$ be a finite connected simplicial set such that $|X^m|$ is a nilpotent space
with finitely generated homotopy groups for all $m >0$.
Further assume that $\Sigma_n \hookrightarrow G^o(X^n)$.
\begin{enumerate}
\item
The result of applying $(\bZ/\ell)_\infty(-)$ to $\pi_{X,o}$,
\begin{equation}
\label{eqn:pi-prime}
(\pi_{X,o})_\ell: ({\ul\cB}(G^o(X),X))_\ell  \ \to \ ({\ul\cB}G^o(X))_\ell \ ,
\end{equation}
is an oriented $\bZ/\ell$-completed $X$-fibration.
\item
The commutative square of $\cF$-spaces
\begin{equation}
\label{eqn:comm-cF}
\xymatrix{
(\ul \cB (G^o(X),X))_\ell \ar[r]  \ar[d]^{(\pi_{X,o})_\ell} &  \ul \cB (G^o_\ell(X),X_\ell) 
\ar[d]^{\pi_{X,\ell,o}} \\
(\ul \cB G^o(X))_\ell \ar[r]_{\lambda} & \ul \cB G^o_\ell(X)
}
\end{equation}
is a fiber homotopy equivalence $(\pi_{X,o})_\ell \ \stackrel{\approx}{\to}\ \pi_{X,\ell,o} $
of oriented $\bZ/\ell$-completed $X$-fibrations.  
\end{enumerate}
\end{prop}

\begin{proof}
The assertion that (\ref{eqn:pi-prime}) is a $\bZ/\ell$-completed $X$-fibration
follows from Proposition \ref{prop:X-ell-cases}(3), since each $(\ul \cB G^o(X))_i = BG^o(X^i)$
is simply connected and each $X^i$ is $\ell$-good (see (\cite[Prop 5.3]{Bo-Kan}).  
Since $\pi_{X,o}$ is oriented, one readily concludes that 
$(\pi_{X,o})_\ell$ is also oriented using the fact that the functor $(\bZ_\ell)_\infty(-)$
sending $G(X^i)$ to $G_\ell(X^i)$ restricts to $G^o(X^i) \to G^o_\ell(X^i)$.

The naturality of $ G^o(X)_\ell \to G^o_\ell(X_\ell)$ with respect to $X$ implies the commutativity
of the  square (\ref{eqn:comm-cF}).
One readily checks that the commutative squares indexed by $I \in \cN(\bf n)$ constituting 
(\ref{eqn:comm-cF}) are homotopy cartesian and preserve orientations, thereby implying the second assertion.
\end{proof}

\vskip .1in
\begin{cor}
\label{cor:classify}
Let $X$ be as in Proposition \ref{prop:pi-o} and assume that the $\cF$-space
$\ul\cB$ satisfies the condition that $(\ul\cB)_I$ is simply connected and $\ell$-good
for all $I \in \ul\cN(\bf n)$ for all $n > 0$. 
Let $f: \ul\cE \ \to \ \ul\cB$ be an oriented $X$-fibration with classifying map
$\phi: \ul\cB  \ \to \ \ul\cB G^o(X)$.
Then $(\bZ/\ell)_\infty(f): \ul\cE_\ell \ \to \ \ul\cB_\ell$
is an oriented $\bZ/\ell$-completed $X$-fibration with classifying map
$$\lambda \circ (\bZ/\ell)_\infty(\phi):  \ul\cB_\ell \ \to \ (\ul\cB G^o(X))_\ell \ \stackrel{\approx}{\to} \ \ul\cB G^o_\ell(X).$$
\end{cor}

\begin{proof}
The proof of Proposition \ref{prop:pi-o}(1) applies to verify that 
$(\bZ/\ell)_\infty(f): \ul\cE_\ell \ \to \ \ul\cB_\ell$ is an oriented $\bZ/\ell$-completed $X$-fibration.
The fact that the classifying map of this oriented $\bZ/\ell$-completed $X$-fibration is given by
$\lambda \circ (\bZ/\ell)_\infty(\phi)$ follows from the commutativity of the following diagram
of $\cF$-spaces
\begin{equation}
\label{eqn:comm-cF-2}
\xymatrix{
 \ul\cE_\ell \ar[d]_{f_\ell} \ar[r] & \cB (G^o(X),X))_\ell \ar[r]  \ar[d]^{(\pi_{X,o})_\ell} &  \ul \cB (G^o_\ell(X),X_\ell) 
\ar[d]^{\pi_{X,\ell,o}} \\
\ul\cB_\ell \ar[r]_{\phi_\ell} & (\ul \cB G^o(X))_\ell \ar[r]_{{\lambda}} & \ul \cB G^o_\ell(X) \ .
}
\end{equation}
\end{proof}

\vskip .2in

%%%%%%%%%%%%%%%%%%%%%%%%
%%%%%%%%%%%%%%%%%%%%%%

\section{Variants of the $J$-homomorphism}
\label{sec:J-hom}

\vskip .1in

The classical $J$-homomorphism relates the 0-connected spectrum ${\bf kU}$ of 
complex $K$-theory  to the (first 
delooping) of the sphere spectrum.  Proposition  \ref{prop:J} represents the $J$-homomorphism
as the classifying map $\cJ$ for the (oriented) $S^2$-fibration  $\tau_{S^2}$ in (\ref{eqn:tau}). 

Most of this section discusses  how to relate the $\bZ/\ell$-completion of $\tau_{S^2}$ to a $\bZ/\ell$-completed 
$S^2$-fibration arising in the context of simplicial schemes over $k$.  This will be used in conjunction 
with the relation of $(\psi^p)_\ell$ to the Frobenius map for schemes over $k$ discussed 
in Section \ref{sec:Adams}.

We shall find it convenient to refer to any $X$-fibration over $\ul \cB$
as an $S^2$-fibration if $|X|$ is homotopy equivalent to the 2-sphere.

\vskip .1in
We begin by recalling the construction of the $\cF$-space ${\ul \cB}GL(\bC)$ 
determining the spectrum ${\bf kU}$.

\begin{construct} \cite[Ex 8.1]{F80}
\label{construct:tau-S2}
There exists an $\cF$-object over $\ul\cN$ of pointed topological spaces 
$${\ul \cB}GL(\bC)^{top}: \cF \quad \to \quad (\text{spaces}_*)$$
with the property that ${\ul \cB}GL(\bC)^{top}({\bf 1})\ = \ \coprod_{n\geq 0} BGL_n(\bC)^{top}$, \
where $BGL_n(\bC)$ is the total topological space associated to the simplicial bar construction 
applied to the complex Lie group $GL_n(\bC)$.  Let $(\bC^n)^+$ denote the 
one point compactification of $\bC^n$, leading to a similarly constructed $\cF$-object
${\ul \cB}(GL(\bC),\bC^+)^{top}$ with
${\ul \cB}(GL(\bC),\bC^+)^{top}({\bf 1})$ \ equal to  \ $\coprod_{n\geq 0} B(GL_n(\bC),(\bC^n)^+)^{top}$. 

Then the result of applying $Sin(-)$ to the projection  ${\ul \cB}(GL(\bC),\bC^+)^{top} \ \to \ {\ul \cB}GL(\bC)^{top}$ 
is an $S^2$-fibration in the sense of Definition \ref{defn:X-fibration}; we denote this $S^2$-fibration by
\begin{equation}
\label{eqn:tau} 
\tau_{S^2}: {\ul \cB}(GL(\bC),\bC^+)  \quad \to \quad  {\ul \cB}GL(\bC).
\end{equation}
\end{construct}

\vskip .1in

The following result is due to G. Segal \cite{Segal}.

\begin{prop}
The ring spectrum $||\ul\cB GL(\bC)||$ associated to the $\cF$-space of Construction \ref{construct:tau-S2}
is equivalent to $\bf{kU}$.
\end{prop}

\vskip .1in

The proof of the following proposition is evident from its statement.

\begin{prop}
\label{prop:J}
The action of $GL_n(\bC)$ on $\bC^n$ determines a natural embedding
$Sin(GL_n(\bC)) \hookrightarrow G^o(X^n)$ where $X$ is a pointed simplicial set 
with $|X| \ \simeq \ \bC^+$.  Thus, the classifying map for
$S^2$-fibration $\tau_{S^2}$ has the form
$$\cJ \ \equiv \ \lambda \circ \cJ^o: {\ul \cB}GL(\bC) \ \to \ {\ul \cB} G^o(X) \ \to \ {\ul \cB} G(X).$$

We define
the $J$ homomorphism to be the map of spectra determined by $\cJ$,
\begin{equation}
\label{eqn:bJ}
{\bf J}: ||{\ul\cB} GL(\bC)|| \quad \to \quad ||{\ul\cB} G(X)||.
\end{equation}
\end{prop}

\vskip .1in

The technique we employ involves relating the $J$-homomorphism to constructions in 
algebraic geometry, following the work of Quillen \cite{Quillen68}, \cite{Quillen} and Sullivan \cite{Sul}
proving the original Adam's conjecture as well as the author's own proof \cite{F73}.

We begin by relating $\tau_{S^2}$  to its algebraized analogue $\tau_{|S^2|}$.  
Our argument involves various steps in order to relate the
topological action of $GL_n(\bC)$ on $(\bC^n)^+$ to the algebraic action of $GL_{n,\bC}$
on $|S^{2n}_\bC|$ as considered in Proposition \ref{prop:map-cone}.
In doing so, we require the following $S^2$-fibrations which are constructed in a 
parallel manner to that given in Construction \ref{construct:tau-S2}.

\begin{prop}
\label{prop:further-actions}
The construction of  $\tau_{S^2}$ of (\ref{eqn:tau}) has the following variants.
\begin{enumerate}
\item
$\tau^U_{S^2}: \ul\cB (\bU,S^2) \ \to \ \ul\cB \bU$ 
associated to the action of the unitary subgroup $\bU_n \subset GL_n(\bC)$
on $S^2 \hookrightarrow (\bC^n)^+$
\item
$\tau_{D/S}: \ul\cB(\bU,D^{2n}/S^{2n-1}) \ \to \ \ul\cB \bU$ 
associated to the action of $\bU_n$ on the 
quotient of the unit disk
$D^{2n}$ by the unit sphere $S^{2n-1}$ in $\bR^{2n} \simeq \bC^n \subset (\bC^n)^+$.
\item
$\tau^U_{|S^2|}: \ul\cB(\bU,|S^2_C|) \ \to \ \ul\cB \bU$ 
associated to the action of $\bU_n$ on $|S^2_\bC|$ (defined in Proposition \ref{prop:map-cone}).
\item
$\tau_{|S^2|}: {\ul \cB}(GL(\bC),|S^2_\bC|) \ \to \ {\ul \cB}GL(\bC)$
associated to the action of $GL_n(\bC)$ on $|S^2_\bC|$.
\end{enumerate}
\end{prop}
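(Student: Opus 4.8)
The plan is, in each of the four cases, to apply Segal's construction of a special $\cF$-space from a permutative category (as in \cite{Segal} and Constructions \ref{construct:Sintau} and \ref{construct:G(X)}) to the monoid $\coprod_{n\geq 0}B(\bG_n,Y_n)$, graded over $\ul\cN$ by $n$, with ``addition'' given by the block-sum homomorphisms $\bG_m\times\bG_n\to\bG_{m+n}$ together with a smash pairing $Y_m\wedge Y_n\to Y_{m+n}$; here $(\bG_n,Y_n)$ is respectively $(\bU_n,(\bC^n)^+)$, $(\bU_n,D^{2n}/S^{2n-1})$, $(\bU_n,|S^{2n}_\bC|)$, and $(GL_n(\bC),|S^{2n}_\bC|)$. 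For the first three cases the $\bU_m\times\bU_n$-equivariant smash pairings, and the compatibilities among them, are exactly what Proposition \ref{prop:map-cone} provides: the maps $(\bC^n)^+\to D^{2n}/S^{2n-1}\leftarrow cone(S^{2n-1}\to D^{2n})\to|S^{2n}_\bC|$ are $\bU_n$-equivariant pointed homotopy equivalences commuting with smash products. For the fourth case one uses instead the $GL_n(\bC)$-equivariant smash pairing on $|S^{2n}_\bC|$ coming from the algebraic smash product of Definition \ref{defn:smash-alg} and the $GL_{n,\bC}$-equivariance established immediately after it (see also Proposition \ref{prop:map-cone}).

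Next I would verify that applying $Sin(-)$ to these $\cF$-objects of pointed topological spaces yields $S^2$-fibrations in the sense of Definition \ref{defn:X-fibration}, just as for $\tau_{S^2}$ in Construction \ref{construct:Sintau}. Condition (1) holds because the fiber of $B(\bG_n,Y_n)\to B\bG_n$ is $Y_n$, which is pointed homotopy equivalent to $S^{2n}$ and hence, after $Sin(-)$, to $Sin(|X^n|)$ for any finite pointed simplicial model $X$ of $S^2$ --- the chain of equivalences being induced by those of Proposition \ref{prop:map-cone} together with $|X^{\wedge n}|\simeq|X|^{\wedge n}$. Conditions (2) and (3) are the usual additivity properties of the bar construction: for product groups acting on product spaces one has $B(\bG\times\bH,Y\times Z)\cong B(\bG,Y)\times B(\bH,Z)$, which gives the homotopy equivalences $\rho_I$, while compatibility of the pairings $Y_m\wedge Y_n\to Y_{m+n}$ with the block-sum maps $\bG_m\times\bG_n\to\bG_{m+n}$ forces $\rho_I$ followed by the fiberwise smash pairing to agree with the total-space structure map $\ul\cE(\mu)$. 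This is verbatim the verification already implicit in Construction \ref{construct:Sintau}.

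Finally, because all the pairings are chosen coherently --- the $\bU_n$-equivariant equivalences of Proposition \ref{prop:map-cone} commute with smash products, and the $GL_n(\bC)$-action on $|S^{2n}_\bC|$ restricts to the $\bU_n$-action --- the four $S^2$-fibrations are linked by a zig-zag of maps of $S^2$-fibrations covering the homotopy equivalences of base $\cF$-spaces induced by $\bU_n\hookrightarrow GL_n(\bC)$ and by the maps of Proposition \ref{prop:map-cone}; these comparison maps are what Section \ref{sec:K-theory} will use to relate $\tau_{S^2}$ to the algebraized fibration $\tau_{|S^2|}$. I expect the only genuine work to be the bookkeeping that makes condition (3) of Definition \ref{defn:X-fibration} hold on the nose rather than merely up to homotopy; as in \cite{F80}, this is absorbed by Segal's permutative-category formalism, which stores the needed coherences in the higher simplicial degrees of the $\cF$-object --- concretely, in the factors $B(Iso(X^{i_T}),Iso(X^{i_T}))$ appearing in the formulas of Construction \ref{construct:G(X)}.
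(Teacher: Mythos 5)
Your proposal is correct and follows essentially the same route as the paper, which offers no separate argument for this proposition beyond the remark that the four variants ``are constructed in a parallel manner'' to Construction \ref{construct:Sintau}; you have simply made that parallel construction explicit, correctly identifying Proposition \ref{prop:map-cone} (equivariant smash pairings compatible with block sums) as the only input needed beyond the Segal permutative-category formalism already used for $\tau_{S^2}$. The closing zig-zag of comparison maps you describe is really the content of the subsequent Proposition \ref{prop:J-compat} rather than of this one, but including it does no harm.
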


 \vskip .1in
  
 The  chain of homotopy equivalences of $S^2$-fibrations occurring in 
 Proposition \ref{prop:further-actions} 
 circumvents the lack of $GL_n(\bC)$-equivariant homotopy equivalences
 between $S^{2n}$ and $|S_\bC^{2n,top}|$  respecting smash products.
 We leave the proof  to the reader.
 
 \begin{prop}
 \label{prop:J-compat}
 The following commutative diagram of $\cF$-spaces  determined by (\ref{eqn:cone-maps}) 
 and the construction of  $\tau_{S^2}$ of (\ref{eqn:tau})
\begin{equation}
\label{eqn:GL-alg-top}
\xymatrix{
{\ul\cB}(GL(\bC),\bC^+) \ar[d]^{\tau_{S^2}} & \ar[l] {\ul\cB}(\bU,S^2) \ar[d]^{\tau_{S^2}^U} \ar[r]
& {\ul\cB}(\bU,D^2/S^1) \ar[d]^{\tau_{D/S}} \\
{\ul\cB}GL(\bC) & \ar[l ]  \ar[r] {\ul\cB}\bU & 
{\ul\cB}\bU 
}
\end{equation}

\begin{equation}
\xymatrix{
{\ul\cB}(\bU,D^2/S^1) \ar[d]^{\tau_{D/S}}  & \ar[l] {\ul\cB}(\bU,|S^2_\bC|) 
\ar[d]^{\tau_{|S^2|}^U} \ar[r]
&   {\ul\cB}(GL,|S^2_\bC|) \ar[d]^{\tau_{|S^2|}}  \\
{\ul\cB}\bU  & \ar[l] {\ul\cB}\bU \ar[r] &  {\ul\cB} GL(\bC) \ .
}
\end{equation}
consists of  a chain of oriented homotopy equivalences of $X$-fibrations, thereby determining 
a homotopy equivalence $\tau_{S^2} \ \approx \ \tau_{|S^2|}$
of oriented $X$-fibrations.
\end{prop}

\vskip .1in

We supplement Proposition \ref{prop:J-compat} with the following fiber homotopy equivalence
of $\ell$-completed $X$-fibrations with $|X|$ homotopy equivalent to $S^2$.

\begin{prop}
\label{prop:class-compare}
The following homotopy commutative diagram of $\cF$-spaces
\begin{equation}
\label{eqn:C-to-Cwedge}
\xymatrix{
( {\ul \cB} (GL(\bC),(\bC)^+))_\ell \ar[d]^{(\tau_{S^2})_\ell} \ar[r] &
({\ul \cB} (GL(\bC),|S^{2,alg}_\bC|))_\ell \ar[d]^{(\tau_{|S^2|})_\ell} \ar[r]^{\tilde \gamma} &
({\ul\cB} (GL_{\bC},S_\bC^{2,alg}))^\wedge \ar[d]^{(\tau_\bC)^\wedge}    
 \\
 ({\ul \cB} GL(\bC))_\ell  \ar[r]_= &
({\ul \cB} GL(\bC))_\ell  \ar[r]_\gamma &
({\ul\cB} GL_{\bC})^\wedge \ ,
}
\end{equation}
determines fiber homotopy equivalences $(\tau_{S^2})_\ell \ \to \ (\tau_{|S^2|})_\ell \ \to \ (\tau_\bC)^\wedge$
of oriented $\bZ/\ell$-completed $X$-fibrations where $|X| \simeq S^2$.  Here,
\begin{equation}
\label{eqn:gamma}
\gamma: ({\ul \cB} GL(\bC))_\ell  \ \to \ ({\ul\cB} GL_{\bC})^\wedge
\end{equation}
denotes a representative of the homotopy class of homotopy equivalences
of $\cF$-spaces associated to the ``classical comparison theorem" as in (\ref{eqn:Phi_X}).
\end{prop}

\begin{proof}
The upper left map of (\ref{eqn:C-to-Cwedge}) is justified using Proposition \ref{prop:alg-smash}
and functoriality of $(\bZ/\ell)_\infty(-)$.  The upper and  lower right maps are the homotopy equivalences
of $\cF$-spaces associated to the ``classical comparison theorem" as in (\ref{eqn:Phi_X}). 
By construction, the squares of (\ref{eqn:C-to-Cwedge}) are commutative and homotopy cartesian,
thus fiber homotopy equivalences of oriented $\ell$-completed $X$-fibrations with $|X| \simeq S^2$.
\end{proof}

\vskip .1in

Applying Theorem \ref{thm:X-ell-universal-o}, we conclude that Proposition \ref{prop:class-compare}
has the following corollary.

\begin{cor}
\label{cor:J-bC}
Denote by 
$$(\cJ_\bC^o)^\wedge: ({\ul\cB} GL_{\bC})^\wedge  \quad \ \to \ul\cB G_\ell^o(S^2)$$
the classifying map for $(\tau_\bC)^\wedge: ({\ul\cB} (GL_{\bC}^{alg},S_\bC^{2,alg}))^\wedge \to 
({\ul\cB} GL_{\bC})^\wedge $.   Then 
$$\cJ_\ell^o, \ (\cJ_\bC^o)^\wedge \circ \gamma: ({\ul\cB} GL(C))_\ell  \quad \ \to \quad \cB G_\ell^o(S^2)$$
are homotopy equivalent maps of $\cF$-spaces.
\end{cor}

As in Proposition \ref{prop:holim}, functoriality of $(-)^\wedge: (\text{pointed simplicial schemes}) 
\to (s.sets_*)$ enables us to associate an $\cF$-space to a functor $\cF \to (\text{pointed simplicial schemes})$.  

\vskip.1in

\begin{prop}
\label{prop:tau-hat}
Let $R$ be either $\bC$ or $W(k)$ or $k$.
There is a natural  map of $\cF$-objects of pointed simplicial $R$-schemes
\begin{equation}
\label{eqn:tau-alg}
\tau_R: {\ul \cB}(GL_R,S_R^{2,alg}) \quad \to \quad {\ul \cB}GL_R \ .
\end{equation}
The evaluation at ${\bf 1} \in \cF$ of $\tau_R$ is the natural projection 
$$ \coprod_{n\geq 0} B(GL_{R,n},S_R^{2n,alg}) \quad \to \quad \coprod_{n\geq 0} BGL_{R,n} \ .$$

Base changes along $\Spec \bC \to \Spec W(k), \ \Spec k \to \Spec W$ determine a commutative
diagram of simplicial schemes
\begin{equation}
\label{eqn:base-change-s}
\xymatrix{
{\ul\cB}(GL_\bC,S^{2,alg}_\bC) \ar[d]^{\tau_{\bC}} \ar[r] & {\ul\cB}(GL_{W(k)},S^{2,alg}_{W(k)})  
\ar[d]^{\tau_{W(k)}}  & \ar[l] {\ul\cB}(GL_k,S^{2,alg}_k) \ar[d]^{\tau_k} \\
\ul\cB GL_\bC \ar[r] & \ul\cB GL_{W(k)} &  \ar[l] \ul\cB GL_k \ .
}
\end{equation}

The result of applying $(-)^\wedge$ to this diagram determines fiber homotopy equivalences
 of oriented $\bZ/\ell$-completed $S^2$-fibrations
\begin{equation}
\label{eqn:base-change-ss}
\xymatrix{
(\ul\cB (GL_\bC,S^{2,alg}_\bC))^\wedge \ar[d]^{(\tau_{\bC})^\wedge} \ar[r] & (\ul\cB (GL_{W(k)},S^{2,alg}_{W(k)}))^\wedge  
\ar[d]^{(\tau_{W(k)}^\wedge)}  & \ar[l] (\ul\cB (GL_k,S^{2,alg}_k))^\wedge \ar[d]^{(\tau_k)^\wedge} \\
(\ul\cB GL_\bC)^\wedge \ar[r] & (\ul\cB GL_{W(k)})^\wedge &  \ar[l] (\ul\cB GL_k)^\wedge \ .
}
\end{equation}

We denote by $\rho: (\ul\cB GL_\bC)^\wedge  \ \to \ (\ul\cB GL_k)^\wedge$ a representative of the homotopy class
of maps of $\cF$-spaces defined by the bottom row of (\ref{eqn:base-change-ss}).
\end{prop}

\begin{proof}
The fact that the $\cF$-spaces $({\ul \cB}GL_R)^\wedge$ and $({\ul \cB}(GL_R,S_R^{alg}))^\wedge$ 
are special $\cF$-spaces follows from Corollary \ref{cor:product}.  The fact that the horizontal maps of
(\ref{eqn:base-change-ss}) are homotopy equivalences follows from Proposition \ref{prop:comparison}.

A functorial isomorphism is given in \cite[Thm 10.7]{F82} relating the mod-$\ell$ \'etale cohomology 
of the geometric fiber of  a map $X \to Y$ of simplicial $R$-schemes such as 
those occurring in Corollary \ref{cor:product} and the mod-$\ell$ cohomology 
of the homotopy-theoretic fiber of $X^\wedge \to Y^\wedge$.
This, together with the fact that each $(BGL_{n,R})^\wedge$ is simply connected, provides the 
functorial homotopy comparisons of the fibers of $(\ul\cB(GL_R,S_R^{2,alg}))^\wedge_I \to 
(\ul\cB GL_R)^\wedge_I$ with $\prod_{j= 1}^n (S^{2i_j})_\ell$ for any $I = (i_1,\ldots,i_n) \in  \ul\cN({\bf n})$
required for $(\tau_R)^\wedge$ to be a $\bZ/\ell$-completed $S^2$-fibration.  

Since this comparison
is compatible with base change for $\Spec \bC \to \Spec R \leftarrow \Spec k$, we conclude that
(\ref{eqn:base-change-ss}) determines a fiber homotopy equivalence of $(\bZ/\ell)$-completed
$S^2$-fibrations covering $\rho$.
\end{proof}

\vskip .1in

\begin{defn}
\label{defn:J-k}
Denote by $(\cJ_k^o)^\wedge: (\ul\cB GL_k)^\wedge \to \ul\cB G^o_\ell(S^2)$ 
 the homotopy class of maps classifying the oriented, $\ell$-completed $S^2$ fibration $(\tau_k)^\wedge$.
Thus, $(\cJ_k^o)^\wedge$  fits in a homotopy commutative map of $\cF$-spaces
\begin{equation}
\label{eqn:j-k}
\xymatrix{
(\ul\cB (GL_k,S^{2,alg}_k))^\wedge \ar[d]^{(\tau_k)^\wedge} \ar[r] & \ \ul\cB (G_\ell^o(S^2),S^2_\ell) \ar[d]^{\pi_{S^2,\ell.o}} \\
(\ul\cB GL_k)^\wedge \ar[r]^{(\cJ_k^o)^\wedge} & \ul\cB G_\ell^o(S^2) \ .
}
\end{equation}
which is a map of $\bZ/\ell$-completed $S^2$-fibrations (i.e., is homotopy cartesian).
\end{defn}

\begin{remark}
\label{rem:base-point}
The \'etale homotopy type of $\Spec R$ with $R$ equal to either $k$ or $W(k)$ or $\bC$ is a point.  Thus,
if $U_{W(k)} \to \Spec W(k)$ is a scheme over $\Spec W(k)$ equipped with a distinguished section $\Spec W(k) \to U_{W(k)}$,
then the base change maps $(U_\bC)_{et} \to (U_{W(k)})_{et} \leftarrow (U_k)_{et}$ have different pointings,
these pointings are related by the ``path" given by the section $\Spec W(k) \to U_{W(k)}$.
\end{remark}

\vskip .1in

The preceding discussion leads to the following proposition.

\begin{prop}
The following diagram of special $\cF$-spaces is homotopy commutative:
\begin{equation}
\label{eqn:hom-comm}
\xymatrix{
( {\ul \cB} GL(\bC))_\ell \ar[r]^\gamma \ar[rd]_{(\cJ^o)_\ell} & (\ul\cB GL_\bC)^\wedge \ar[d]^{(\cJ_\bC^o)^\wedge} \ar[r]^\rho &
(\ul\cB GL_k)^\wedge \ar[ld]^{(\cJ_k^o)^\wedge} \\
& \ul\cB G^o_\ell(S^2) & \quad .
}
\end{equation}
where $\gamma$ is given in (\ref{eqn:gamma}) and  $\rho$ is given by the bottom row 
of (\ref{eqn:base-change-ss}); the homotopy class of the map
$\cJ^o$ is given in Proposition \ref{prop:J}, $(\cJ_\bC^o)^\wedge$ is given in Corollary \ref{cor:J-bC}, 
$(\cJ_k^o)^\wedge$ is given in Definition \ref{defn:J-k}.   
\end{prop}

\begin{proof}
The homotopy commutativity of the left triangle follows from Proposition \ref{prop:class-compare} (see \ref{eqn:C-to-Cwedge})
in conjunction with Theorem \ref{thm:X-ell-universal-o}.  

The homotopy commutativity of the right triangle follows from Proposition \ref{prop:tau-hat} (see \ref{eqn:base-change-s})
in conjunction with Theorem \ref{thm:X-ell-universal-o}.  
\end{proof}

\vskip .2in

%%%%%%%%%%%%%%%%%%
%%%%%%%%%%%%%%%%%%%

\section{Adams operations}
\label{sec:Adams}

The purpose of this section is to present our $\cF$-space model for the Adams operation 
$\psi^p: \bf {ku}_\ell \to \bf {ku}_\ell$ on the $\bZ/\ell$-completed 0-connected spectrum of
complex $K$-theory.  We incorporate the perspective of D. Sullivan  \cite{Sul} who utilizes
the \'etale homotopy type of actions of elements of $Gal(\bC)$ on complex algebraic varieties.

\vskip .1in
We first record the following observation using the last statement of Proposition \ref{prop:Frob}(3).

\begin{lemma}
\label{lem:inverse}
The two maps
$$F^\wedge, \ (\sigma_k)^\wedge: (\ul\cB GL_k)^\wedge \ \to \ (\ul\cB GL_k)^\wedge$$
are mutually inverse to each other.  
\end{lemma}

The following theorem is a restatement of the above lemma in terms of $\cF$-spaces using Sullivan's action
of $\sigma_\bC^{-1} \in Gal(\bC)$ and incorporating Quillen's interpretation of $\psi^p$ for
vector bundles on varieties over $\Spec  k$.

\vskip .1in

\begin{thm} (\cite{Sul}, \cite{Quillen68})
\label{thm:Sullivan}
Let $\sigma_{\bC} \in Gal(\bC/\bQ)$ be an extension to an automorphism of $\bC$ of the Teichm\"uller lifting
$\sigma_{W(k)}: W(k) \to W(k)$ of the arithmetic Frobenius $\sigma \in Gal(k/\bF_p)$.  Then
the following is a homotopy commutative square of ring spectra
\begin{equation}
\label{eqn:sigma} 
\xymatrix{
||(\ul\cB GL(\bC))_\ell || \ar[d]_{(\psi^p)_\ell} \ar[r]^-{||\gamma||} & ||(\ul\cB GL_{\bC})^\wedge|| \ar[d]^{||(\sigma_\bC^{-1})^\wedge||} 
\ar[r]^{||\rho||} &  ||({\ul\cB}GL_k)^\wedge|| \ar[d]^{||F^\wedge||} \\
||(\ul\cB GL(\bC))_\ell || \ar[r]_-{||\gamma||} & ||({\ul\cB} GL_{\bC})^\wedge|| \ar[r]_{||\rho||} &  ||({\ul\cB}GL_k)^\wedge|| \ ,
}
\end{equation}
where the map of $\cF$-spaces $\gamma: (\ul\cB GL(\bC))_\ell \to (\ul\cB GL_{\bC})^\wedge$ is
presented in Corollary \ref{cor:J-bC} (determined by the ``classical comparison theorem" as in 
Proposition \ref{prop:holim}) and the map $\rho:(\ul\cB GL_{\bC})^\wedge \to {\ul\cB}GL_k)^\wedge$
appears in Proposition \ref{prop:tau-hat} (determined by ``base change from $\bC$ to $k$").
\end{thm}

\vskip .1in

\noindent
{\bf Discussion of proof of Theorem \ref{thm:Sullivan}.}

The fact that the maps of (\ref{eqn:sigma}) respect the mutliplicative structure 
of the spectra occurring in the diagram can be verified using Segal's incorporation
of a multiplication on an $\cF$-space.  (See \cite[Defn 5.1]{Segal}.)

By Lemma \ref{lem:inverse}, the right square of (\ref{eqn:sigma}) commutes 
and the horizontal maps $|| \rho ||$ are homotopy equivalences as seen in 
Proposition \ref{prop:tau-hat}.  Thus, the homotopy commutativity of the
outer square is equivalent to the homotopy commutativity of the left square 
(\ref{eqn:sigma}).

Sullivan's proof of the homotopy commutativity of the left square is based on
the splitting principle for bundles over Grassmannians, reducing the verification
to restricting to the two compositions of the left square to $||\ul\cB \bT(\bC)_\ell ||$,
where $\bT_n(\bC) \hookrightarrow GL_n(\bC)$ is the maximal torus of diagonal
invertible $n\times n$ matrices.

Quillen essentially considers the outer square.  Let $E$ be a rank $n$ 
vector bundle over a variety $Y$ over $\Spec k$ defined over $\bF_p$. 
Quillen observes 
that $\psi^p(E)$  is given by the pull-back of $F^*(E)$ of $E$
along the Frobenius map $F: Y \to Y$.  More generally, he observes that
Adams operations can be expressed in terms of $\lambda$-operations
which can be expressed upon addition of a trivial bundle by maps  
$\Lambda^i: Gr_{N+n,n;\bZ} \to Gr_{M+m,m;\bZ}$.  Furtheremore, 
base change maps for $R$ with $R$ = $\bC$ or $W(k)$ or $k$ 
give $\lambda$ operations on Grassmannians over $\Spec R$.

A later proof of Theorem \ref{thm:Sullivan} is given by the author 
in \cite[Prop 2.11]{F76} (with quite different notation) at the 0-space level of 
the spectra in (\ref{eqn:sigma}).  Since  $F^\wedge: ({\ul\cB}GL_k)^\wedge \to ({\ul\cB}GL_k)^\wedge$ respects
the ring structure (associated to tensor product) on these 0-spaces, one should then 
appeal to \cite[VII.3.2]{May77} to conclude that the outer square of (\ref{eqn:sigma}) is a homotopy commutative 
square of ring spectra.

\vskip .2in

%%%%%%%%%%%%%%%%%%%%%%%%
%%%%%%%%%%%%%%%%%%%%%%%%%%

\section{The Stable Adams Conjecture}
\label{sec:stable}

The following proposition presents a first version of the stable Adams conjecture.

\vskip .1in
 
 \begin{prop}
 \label{prop:F-wedge}
 Retain the notation of Section \ref{sec:J-hom}.
 \begin{enumerate}
 \item
 The commutative square of $\cF$-spaces
  \begin{equation}
\label{eqn:Fwedge}
\xymatrix{
(\ul\cB (GL_k,S_k^{2,alg}))^\wedge \ar[r]^{F^\wedge} \ar[d]^{(\tau_k^{alg})^\wedge} &  
(\ul\cB (GL_k,S_k^{2,alg}))^\wedge \ar[d]^{(\tau_k^{alg})^\wedge} \\
(\ul\cB GL_k)^\wedge \ar[r]^{F^\wedge}  & (\ul\cB GL_k)^\wedge
}
\end{equation}
obtained by applying $(-)^\wedge$ to (\ref{eqn:S-ell})
is a fiber homotopy equivalence of $\bZ/\ell$-completed $S^2$-fibrations.
\item
Consequently, the following diagram of $\cF$-spaces is homotopy commutative:
\begin{equation}
\label{eqn:F-J}
\xymatrix{
(\ul\cB GL(\bC))_\ell \ar[rrrd]^{\cJ_\ell} \ar[r]^\gamma & (\ul\cB GL_{\bC})^\wedge 
\ar[r]^\rho \ar[dd]_{(\sigma_\bC^{-1})^\wedge} & (\ul\cB GL_k)^\wedge \ar[rd]^{(\cJ_k)^\wedge} \ar[dd]_{F^\wedge} & \\
& & & \ul\cB G_\ell(S^2) \ .\\
(\ul\cB GL(\bC))_\ell \ar[rrru]_{\cJ_\ell}  \ar[r]_\gamma & (\ul\cB GL_{\bC})^\wedge 
\ar[r]_\rho & (\ul\cB GL_k)^\wedge \ar[ru]_{(\cJ_k)^\wedge} &
}
\end{equation}
\end{enumerate}
 \end{prop}
 
 \begin{proof}
 The first assertion follows directly from Proposition \ref{prop:hom-fiber}
and the definitions of the objects/maps considered.

The second assertion follows from the first together with Theorem \ref{thm:X-ell-universal}
and Lemma \ref{lem:inverse}.
 \end{proof}

\vskip .1in

In our proof of Theorem \ref{thm:stable-adams}, we  use  the following lemma ``commuting"
the functors $(\bZ_\ell)_\infty(-)$ with $||-||$.

\begin{lemma}
\label{lem:ell-spectra}
Let $\ul \cB$ be a special $\cF$-space with the property that $\ul\cB_I$ is $\ell$-good for
every $I \in \ul\cN(\bf n), \ n > 0$.   Then the natural map of $\cF$-spaces
\ $\ul\cB  \  \to \ (\ul \cB)_\ell$ \ induces a homotopy equivalence of spectra
\begin{equation}
\label{eqn:specequiv}
(|| \ul\cB ||)_\ell \times_{{\bf K(\bZ_\ell,0)}} {\bf K(\bZ,0)} \quad \to \quad || (\ul \cB)_\ell ||.
\end{equation}

In particular, $||\cJ_\ell ||$ is a model for the $\bZ/\ell$-completion of the 0-connected spectrum ${\bf kU }$
restricted along ${\bf K(\bZ,0)} \to {\bf K(\bZ_\ell,0)}$.
\end{lemma}

\begin{proof}
Since $(\bZ/\ell)_\infty(-)$ does not localize the set of connected components, 
we apply $\times_{{\bf K(\bZ_\ell,0)}} {\bf K(\bZ,0)}$
to the $\bZ/\ell$-completed spectrum $(\bZ/\ell)_\infty(||\ul\cB||)$ in order that this spectrum
be directly related to the spectrum $||(\ul \cB)_\ell ||$. 

Observe that the prolongation of $\ul\cB \to (\ul\cB)_\ell$ 
to a natural transformation of functors on finite pointed simplicial sets (with values in pointed simplicial sets,
as in \cite{Bo-F})
yields an $\bZ/\ell$-equivalence \ $\ul\cB(S)  \  \to \ (\ul \cB)_\ell(S)$  for 
any finite pointed simplicial set $S$.    This implies that $\ul\cB(S^n)  \  \to \ ((\ul \cB)_\ell(S^n), \ n > 0,$
is an $\bZ/\ell$-equivalence.  This last map can be identified as the map on the spaces of 
$n$-th deloopings $||\ul\cB||_n \  \to \ ||(\ul \cB)_\ell ||_n$ of the homotopy theoretic group 
completion of $\ul\cB(\bf 1) \to (\ul\cB_\ell)(\bf 1))$.  (See \cite{Bo-F}.)
\end{proof}

\vskip .1in

We now verify that the translation of Proposition \ref{prop:F-wedge} into the context of 
spectra together Theorem \ref{thm:Sullivan} imply the following theorem (also stated in the
introduction).  

\vskip .1in

\begin{thm}
\label{thm:stable-adams}

Let ${\bf kU}$ denote the 0-connected spectrum of (topological) complex K-theory
and let ${\bf BS^{2}_\ell} \ = \ ||\ul\cB G_\ell(S^2)||$ denote the 0-connected spectrum naturally constructed using
self-equivalences of $\bZ/\ell$-completions of even spheres.

If $p$ and  $\ell$ are distinct primes, then the maps of spectra
$${\bf J}_\ell, \ {\bf J}_\ell \circ (\psi^p)_\ell:  ({\bf kU})_\ell \times_{{\bf K(\bZ_\ell,0)}} {\bf K(\bZ,0)} 
\quad \to \quad {\bf BS^2_\ell} $$
are homotopy equivalent.
\end{thm} 

\begin{proof}
By Lemma \ref{lem:ell-spectra}, to prove the asserted homotopy equivalence of spectra it suffices
to prove that 
$$||\cJ_\ell ||, \ ||\cJ_\ell || \circ (\psi^p)_\ell: ||\ul\cB GL(\bC))_\ell || \ \to \ ||\ul\cB G_\ell(S^2)||$$
are homotopy equivalent maps of spectra.  By Theorem 8.2, once we apply  $|| - ||$ to the diagram of (\ref{eqn:F-J})
we may add to the diagram  by inserting  
$(\psi^p)_\ell: ||\ul\cB GL(\bC))_\ell || \to ||\ul\cB GL(\bC))_\ell || $ as the left vertical map
while preserving homotopy commutativity.  In particular, we obtain the
required homotopy commutative triangle
\begin{equation}
\label{eqn:F-J-||}
\xymatrix{
||(\ul\cB GL(\bC))_\ell || \ar[rd]^{||\cJ_\ell ||} \ar[dd]^{(\psi^p)_\ell} & \\ 
& ||\ul\cB G_\ell(S^2)|| \ .\\
(\ul\cB GL(\bC))_\ell \ar[ru]_{||\cJ_\ell ||}  & 
}
\end{equation}
\end{proof}

\vskip .1in

As mentioned in the introduction, we denote by $(\bf{kU})^o$ the 1-connected (and hence, 2-connected) 
cover of $\bf{kU}$ and denote by ${\bf bS_\ell^2}$  the 2-connected cover 
of $\bf BS_\ell^2$.    The following result follows
quickly from Theorem \ref{thm:stable-adams} and Proposition \ref{prop:fingen}.

\begin{cor}
\label{cor:oriented-stable-adams}
Let $(\bf J_\ell)^o:  (({\bf kU})^o)_\ell \ \to \ \bf{bS^2_\ell}$ denote the map induced by ${\bf J_\ell}$ 
 on 2-connnected covers.  The following maps 
  $$(\bf J_\ell)^o, \ (\bf J_\ell)^o \circ (\psi^p)_\ell:  (({\bf kU})^o)_\ell 
\quad \to \quad {\bf bS^2_\ell}  $$
are homotopy equivalent as maps of spectra whenever $p$ and $\ell$ are distinct primes.
\end{cor}

\begin{proof}
Let $[{\bf A},{\bf B}]$ denote the group of homomorphisms from a spectrum ${\bf A}$ to a spectrum 
${\bf B}$.  We have an exact sequence
\begin{equation}
\label{ex:exact-spec}
[({\bf kU})^o)_\ell,K(\bZ_\ell,0)] \ \to \  [({\bf kU})^o)_\ell,\bf{bS_\ell ^2}] \ \to 
[({\bf kU})^o)_\ell,{\bf BS_\ell ^2}]. 
\end{equation}  
(See Proposition \ref{prop:fingen}.) \ Since the two maps
$$
(\bf J_\ell), \ (\bf J_\ell) \circ (\psi^p)_\ell \in [({\bf kU})_\ell\times_{{\bf K(\bZ_\ell,0)}} {\bf K(\bZ,0)},\bf{bS_\ell^2}]
$$
have equal image in $[({\bf kU})_\ell\times_{{\bf K(\bZ_\ell,0)}} {\bf K(\bZ,0)},\bf{BS_\ell^2}]$ 
thanks to Theorem \ref{thm:stable-adams}, \\
$(\bf J_\ell)^o, \ (\bf J_\ell)^o \circ (\psi^p)_\ell \in [(({\bf kU})^o)_\ell,\bf{bS_\ell^2}]$ have equal image
in $[(({\bf kU})^o)_\ell,\bf{BS_\ell^2}]$.
Since $(({\bf kU})^o)_\ell$ is connected, $[(({\bf kU})^o)_\ell,K(\bZ_\ell,0)] = 0$.  Consequently,
the exact sequence tell us 
that 
$$(\bf J_\ell)^o \  = \  (\bf J_\ell)^o \circ (\psi^p)_\ell \quad \in \quad [({\bf kU})^o)_\ell,\bf {bS_\ell^2}].$$
\end{proof}

\vskip .1in

\begin{remark}
\label{rem:false}
The maps  the maps ${\bf J}_\ell, \ {\bf J}_\ell \circ (\psi^p)_\ell$ lift 
to maps
\begin{equation}
\label{eqn:lift-2maps}
\widetilde{\bf J_\ell}, \ \widetilde{\bf J_\ell} \circ (\psi^p)_\ell: ({\bf kU})_\ell \quad \to \quad {\bf bS_\ell^2}.
\end{equation}
Theorem \ref{thm:stable-adams} tells us that the maps of (\ref{eqn:lift-2maps}) become homotopy equivalent
when composed with ${\bf bS_\ell ^2} \to {\bf BS_\ell ^2} $.  Corollary \ref {cor:oriented-stable-adams} tells us
that the maps of (\ref{eqn:lift-2maps}) are homotopy equivalent maps when restricted to $ (({\bf kU})^o)_\ell$.

However, as explained in \cite{B-K}, the maps of (\ref{eqn:lift-2maps}) are not homotopy equivalent (as maps of spectra).
\end{remark}

\vskip .2in

%%%%%%%%%%%%%%%%%%%%
%%%%%%%%%%%%%%%%%%%

\section{Concluding remark about past arguments}
\label{sec:remarks}

The verification of the Stable Adams Conjecture given in Theorem 10.4 of \cite{F80}
is mistaken in its treatment of orientations.  A computation by an anonymous  mathematician
gave a computation involving Toda brackets which showed that the proof given failed in
the non-oriented context. 

Nevertheless, the overall approach of this paper is similar to that of \cite{F80}.
The careful reader of this paper will find  modifications of the definitions of $X$-fibrations
(using  Reedy sectionings) and of smash product of algebraic spheres.  In this paper,
we introduce and use
$\ell$-completed $X$-fibrations, a class of $X$-fibrations more general than those
obtained as $\ell$-completions of $X$-fibrations.  Further
modifications include changes of notation, organization, exposition, and attention to detail.
 \vskip .2in

%%%%%%%%%%%%%%%%%%%%
%%%%%%%%%%%%%%%%%%%%%


\begin{thebibliography}{20} 

\bibitem{Adams} J.F. Adams, {\em On the groups $J(X)$. I.}, Topology {\bf 2} (1963), 181 - 195.

\bibitem{Ando1} M. Ando, A. Blumberg, D. Gepner, M. Hopkins, and C. Rezk,
{An $\infty$-categorical approach to $R$-line bundles, $R$-module Thom spectra, and twisted $R$-homology}
J. Topology {\bf 7} (2014), 869 - 893.

\bibitem{Ando2} M. Ando, A. Blumberg, D. Gepner, M. Hopkins, and C. Rezk,
{Units of ring sectra, orientations and Thom spectr via rigid infinite loop space theory}, J. Topology {\bf 7}
(2014), 1077 -- 1117.

\bibitem{A-M} M. Artin, B. Mazur, {\'Etale Homotopy}. Lecture Notes in Mathematics {\bf 100},
Springer-Verlag, 1969.

\bibitem{SGA4} M. Artin, A. Grothendieck, and J.-L Verdier, {Theorie des Topos et 
Cohomologies \'Etale des schemas}, Lecture Notes in Mathematics {\bf 269} Springer-Verlag, 1972.


\bibitem{B-K} P. Bhattacharya, N. Kitchloo, {\em The $p$-local Stable Adams Conjecture: Erratum},
 arXiv:1803.11014.
 
%The stable Adams conjecture and higher associative structures
%on Moore spectra}, Annals of Mathematics {\bf 195} (2022), 375 - 420.

\bibitem{Bo-F} A.K. Bousfield, E. Friedlander, {\em Homotopy theory of $\Gamma$-spaces, spectra, and 
bisimplicial sets}.  Lecture Notes in Mathematics {\bf 658}, Springer-Verlag, 1978.

\bibitem{Bo-Kan} A.K. Bousfield, D. Kan, {Homotopy limits, completions, and localizations}.  Lecture
Notes in Mathematics {\bf 569} Springer-Verlag, 1977.

\bibitem{Deligne} P. Deligne, {Cohomology \'Etale (SGA $4\frac{1}{2}$)}, Lecture
Notes in Mathematics {\bf 304} Springer-Verlag, 1972.

\bibitem{F73} E. Friedlander, {\em Fibrations is etale homotopy theory}, Publications Math IHES {\bf 42} (1973), 
5 - 46.

%\bibitem{F75} E. Friedlander, {\em Unstable $K$-theories of the algebraic closure of a finite field}, 
%Comment. Math. Helvetici [\bf 50} (1975}, 145 - 154.

\bibitem{F76}  E. Friedlander, {\em Computations of $K$-theories of finite fields}, Topology {\bf 15} (1976), 87 - 109.

\bibitem{F80}  E. Friedlander, {\em The infinite loop Adams conjecture via classification theorems for $\cF$-spaces}, 
Math. Proc. Camb. Phil. Soc.{\bf 87} (1980), 109 - 150.

\bibitem{FII} E. Friedlander, {\em \`Etale $K$-theory II. Connections with algebraic $K$-theory}, Annales Scientifiques de l' \'E.N.S.
{\bf 15} (1982), 231 - 256.

\bibitem{F82} E. Friedlander, {\it \'Etale Homotopy of Simplicial Schemes}, Annals of Math. Stud 
{\bf 104}, Princeton Univ. Press, 1982.

\bibitem{F-Par} E. Friedlander and B. Parshall, {\em Etale cohomology of reductive groups}, 
{\it Algebraic $K$-Theory, Evanston,1980}, Lecture Notes in Math. {\bf 854} (1981), Springer-Verlag, 127 - 140.

\bibitem{F-Sey} E. Friedlander and R.M. Seymour, {\em Two proofs of the stable Adams conjecture}, Bull.
A.M.S. {\bf 83} (1977), 1300 - 1302.

\bibitem{Lurie} J. Lurie, {\it Higher Topos Theory}, Ann. of Math. Stud. {\bf 170}, Princeton Univ. Press 2009.

\bibitem{MacL} S. Mac Lane, S. {\it Categories for the working Mathematician}, Grad. Texts in Math {\bf 5},
Springer-Verlag, 1974.

\bibitem{May67} J.P. May, {\it Simplicial objects in algebraic topology}, Van Nostrand Mathematical 
Studies, {\bf 11}, 1967.

\bibitem{May75} J.P. May, {\it Classifying spaces and fibrations},  Mem. Amer. Amth. Soc {\bf 155} (1975).

\bibitem{May77} J.P. May,   E${}_\infty$  {\it ring spaces and } E${}_\infty$ {\it spectra}, 
Lecture Notes in Math. {\bf 577},  Springer-Verlag, 1977.

\bibitem{May} J.P. May, {\em The spectra associated to permutative categories}, Topology {\bf 17} (1978), 225 - 228.

%\bibitem{May-Th} J.P. May, R. Thomason {\em The uniqueness of infinite loop space machines}, Topology {\bf 17} (1978),
%205 - 224.

\bibitem{Mc-Seg} D. McDuff, G. Segal, {\em Homology fibrations and the ``group completion" theorem},
Invent Math {\bf 31} (1975), 274 - 284.

\bibitem{Milne} J. Milne, {\it \'Etale Cohomology}, Princeton Mathematical Series {\bf 33}, Princeton 
Univ. Press, 1980.

%\bibitem{Panin} I. Panin, {\em On Algebraic $K$-theory of generalized flag fiber bundles and some of their twisted forms},
%Adv. Soviet Math. {\bf 4}, 21 - 46.

\bibitem{Quillen68} D. Quillen, {\em Some remarks on etale homotopy theory and a conjecture of Adams},
Topology {\bf 7} (1968), 111 - 116.

\bibitem{Quillen} D. Quillen, {\em The Adams Conjecture}, Topology {\bf 10} (1971), 67 - 80.

\bibitem{Segal} G. Segal, {\em Categories and cohomology theories}, Topology {\bf 13} (1974), 293 - 312.

%\bibitem{Serre} J.-P. Serre, {\em Groupes d'homotopie et classes de groupes abelian}, Ann. of Math {\bf 58} (1953), 
%258 - 294.

\bibitem{Sul} D. Sullivan, {\em Genetics of homotopy theory and the Adams Conjecture}, Ann. of Math
{\bf 100} (1974), 1 - 79.

\end{thebibliography}
\end{document}